\documentclass[12pt]{amsart}
\usepackage{a4wide}
\usepackage{mathrsfs}

\usepackage{graphicx}
\usepackage[abs]{overpic}
\usepackage{here}
\usepackage{contour}

\newcommand{\arc}[1]{%
 \settowidth{\dimen0}{\ensuremath{#1}}%
 \divide\dimen0 by 2%
 \overset{\rotatebox{-90}{\ensuremath{\left(\rule{0pt}{\dimen0}\right.}}}{#1}%
}

\usepackage[%
 setpagesize=false,%
 bookmarks=true,%
 bookmarksdepth=tocdepth,%
 bookmarksnumbered=true,%
 colorlinks=false,%
 pdftitle={},%
 pdfsubject={},%
 pdfauthor={},%
 pdfkeywords={}%
]{hyperref}

\usepackage{amsmath,amssymb,amsthm,mathtools}
\usepackage{tikz}
\usetikzlibrary{intersections,calc,arrows.meta}

\newtheorem*{annulus decomposition theorem}{Annulus decomposition theorem}
\theoremstyle{definition}
\newtheorem{definition}{Definition}[section]
\newtheorem*{theorem}{Theorem}
\newtheorem{introtheorem}{Theorem}

\newtheorem{proposition}[definition]{Proposition}
\newtheorem*{proposition*}{Proposition}
\newtheorem{lemma}[definition]{Lemma}
\newtheorem{corollary}[definition]{Corollary}
\newtheorem{remark}[definition]{Remark}
\newtheorem*{acknowledgement}{Acknowledgement}

\title{GRAFTING OF REAL PROJECTIVE SURFACES WITH HITCHIN HOLONOMY}
\author{Toshiki Fujii}
\address{Graduate School of Science, Osaka University}
\email{u338912j@ecs.osaka-u.ac.jp}

\begin{document}
  \maketitle

  \begin{abstract}
    We define graftble curves on real projective surfaces. In particular, we construct graftbale ones in Hitchin case and show that real projective structures with the same Hitchin holonomy, carrying the same weight type, are related to each other via multi-graftings.
  \end{abstract}
  
  \section{Introduction}

  Let $\Sigma$ be an oriented closed smooth surface with genus $ g > 1$, and let $\widetilde{\Sigma}$ denote the universal cover of $\Sigma$. A \textit{real projective structure} on $\Sigma$ is a $(\mathbb{RP}^2,PGL(3,\mathbb{R}))$-structure. Namely, it is the maximal atlas of $\mathbb{RP}^2$ with transition maps in $PGL(3,\mathbb{R})$.

  There is another equivalent definition. A real projective structure is a pair $(D,\rho)$, where $D \colon \widetilde{\Sigma} \to \mathbb{RP}^2$ is a local diffeomorphism and $\rho \colon \pi_1(\Sigma) \to PGL(3,\mathbb{R})$ is a representation such that $D$ is $\rho$-equivariant, i.e., $D \circ \gamma = \rho(\gamma) \circ D$ for all $\gamma \in \pi_1(\Sigma)$. The map $D$ is called the \textit{developing map} and $\rho$ the \textit{holonomy representation} (see \cite{Goldman:geo}). Two projective structures $(D_1,\rho_1)$ and $(D_2,\rho_2)$ are equivalent if there exists a $\pi_1(\Sigma)$-equivariant diffeomorphism $\Phi \colon \widetilde{\Sigma} \to \widetilde{\Sigma}$ and $A \in PGL(3,\mathbb{R})$ such that $D_2 = A \circ D_1 \circ \Phi$ and $\rho_2 = A \circ \rho_1 \circ A^{-1}$.

  In this paper, we consider an operation called \textit{grafting} that produces a new one from given a real projective structure without changing the holonomy representation. More concretely, grafting operation proceeds by cutting a projective surface along a particular type of simple closed curve, called \textit{graftable curve}, and inserting a special type of annulus (e.g., a Hopf annulus) along the curve (see \S 2.4). 

  Grafting was used by D.Sullivan and W.Thurston \cite{ST:exa} to produce examples of real projective structures on the two torus, whose developing maps fails to be covering maps onto its images. Traditionally, graftable curves on $\mathbb{RP}^2$-surfaces have been taken to be simple closed geodesics, namely those that develop into straight line segments in $\mathbb{RP}^2$. In this paper, we introduce a topological definition of graftable curves.
  
  Grafting is important because real projective structures with Hitchin hololonomy bijectively correspond to \textit{grafting data} consisting a pair $(\{\alpha_i\}_{i=1}^n,\{w_i\}_{i=1}^n)$, where $\{\alpha_i\}_{i=1}^n$ is the finite collection of disjoint simple closed curves and $\{w_i\}_{i=1}^n$ is the collection of specific weights. Namely, these weights are words written as products of even exponent in two elements $x$ and $y$, called \textit{completely even words}. Goldman proved that every real projective structure on $\Sigma$ with Hitchin holonomy is obtained from its associated convex real projective structure by grafting an annulus corresponding to $w_i$ along each $\alpha_i$ (see \cite{Goldman:fuc}). This operation is called a \textit{multi-grafting}. 

  Similarly to the real projective strcutres above, every $\mathbb{CP}^1$-structure on $\Sigma$ with Fuchsian holonomy is obtained by grafting its associated hyperbolic structure along a weighted multi-curve. The weights take value in $\mathbb{Z}_{>0}$ and they correspond to Hopf-annuli available for grafting. Consequently, the orientation of the multi-curve is irrelevant. By contrast, in the case of real projective structures, the weights are more intricate, and one has to care for the orientation of the multi-curve of grafting data ($\{\alpha_i\}_{i=1}^n$,$\{w_i\}_{i=1}^n$): inserting annulus corresponding to $w_i$ along $\alpha_i$ is equivalent to inserting the annulus corresponding to some weight ${w_i}^{*}$ along ${\alpha_i}^{-1}$. Algebraically, ${w_i}^{*}$ can be also described as the word obtained by reversing $w_i$ and interchanging $x$ and $y$. We define a \textit{weight type} of grafting data ($\{\alpha_i\}_{i=1}^n,\{w_i\}_{i=1}^n$) as a set of the weights $\{w_1,{w_1}^{*}, \cdots ,w_n,{w_n}^{*}\}$ (see Definition \ref {def:5.1}).
  
  In the setting of $\mathbb{CP}^1$-structures, it is proved in \cite{CDF:gra} that one can pass between any two exotic projective structures with Fuchsian holonomy $\rho$ by at most two compositions of multi-graftings. In this paper, we obtain the following analogous result for real projective structures:

  \begin{introtheorem}
    Let $\sigma_1$ and $\sigma_2$ be two real projective structures on $\Sigma$ with the same Hitchin holonomy. Suppose in addition that they correspond to the grafting data with the same weight type. Then, $\sigma_2$ can be obtained from $\sigma_1$ by a composition of at most $6g$ multi-graftings.
  \end{introtheorem}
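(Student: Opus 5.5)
By Goldman's theorem, write $\sigma_1=\mathrm{Gr}_{(\{\alpha_i\},\{w_i\})}(\sigma_0)$ and $\sigma_2=\mathrm{Gr}_{(\{\beta_j\},\{v_j\})}(\sigma_0)$, where $\sigma_0$ is the convex structure with the given Hitchin holonomy $\rho$. The plan mirrors \cite{CDF:gra}, with the key new input being that graftable curves are defined topologically rather than as geodesics. One cannot simply ``ungraft'' $\sigma_1$ back to $\sigma_0$, since grafting only inserts annuli. Instead, the idea is to find, on the current surface, a graftable multicurve $\gamma$ and a weight $u$ such that grafting produces a structure whose grafting data is ``simpler'' or ``closer'' to the target. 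Each step changes the grafting data by a controlled move, and a chain of such moves connects $\sigma_1$ to $\sigma_2$.

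The first step is a local statement. Let $\alpha$ be a grafting curve of $\sigma_1$ with weight $w$, and let $\gamma$ be a simple closed curve on $\Sigma$ with $i(\alpha,\gamma)\neq 0$. I would construct a graftable representative of $\gamma$ on $\sigma_1$: take the geodesic representative on $\sigma_0$ and, in each inserted annulus it crosses, replace the arc by an arc whose development crosses the inserted region monotonically. This is exactly where the topological definition of graftability (\S 2.4) is needed, since the curve is no longer a projective geodesic. Grafting along this curve with a suitable completely even word should yield a structure equal to the convex structure grafted along a different multicurve. The resulting data should be expressible through a Dehn-twist-type or Goldman-bracket-type recombination of $\alpha$ and $\gamma$, with weights of the same type. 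The orientation bookkeeping enters here through the $w\leftrightarrow w^{*}$ symmetry, which is why the weight type is preserved rather than the individual weights.

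The second step handles the global moves, which I would split into three phases. First, pass from $\{\alpha_i\}$ to a pants decomposition curve system: complete $\{\alpha_i\}$ to a pants decomposition, and use curves that cross it to move the support. Second, move between two pants decompositions. Third, adjust weights within a type. A pants decomposition has $3g-3$ curves. Choosing a transverse multicurve, in the style of \cite{CDF:gra}, lets one go from one decomposition to another with a bounded number of simultaneous multi-graftings: each curve is handled in disjoint families, with about two multi-graftings per curve family. Going out from $\sigma_1$ and back toward $\sigma_2$ should give roughly $2(3g-3)$ moves plus a few additional ones, which fits within $6g$. Since multi-graftings are only additive, the meeting point should be a common ``further-grafted'' structure reachable from both sides. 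To make this work, I would need grafting to be reversible in the sense that if $\tau=\mathrm{Gr}(\sigma)$, then $\sigma$ is also obtained from $\tau$ by a grafting along a crossing curve.

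The main obstacle is this reversibility, together with the precise identification of the grafting data after grafting along a non-geodesic graftable curve. This requires analysing how the developing map wraps around the fixed points of $\rho(\alpha)$ and $\rho(\gamma)$, and checking that the resulting weights are again completely even words of the same type. I would first attempt the case of a single curve $\alpha$ with weight $x^2$ and a curve $\gamma$ meeting it once. In that case I would compute the developed image explicitly using the attracting and repelling fixed points of the hyperbolic element $\rho(\alpha)$, and then generalize by induction on word length.
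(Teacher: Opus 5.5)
Your local step is in line with the paper's Propositions~\ref{prop:3.2} and~\ref{prop:4.2}: replace the convex geodesic representative of $\beta$ inside the grafted annuli to obtain a graftable $\beta_R$ or $\beta_L$, then identify the result of grafting along it. One correction there: the torus is not a free choice. It is forced to be $T^{\rho(\beta_j)}_{\tau_j}$ (resp.\ $T^{\rho(\beta_j)}_{\overline{\tau_j}}$ for $\beta_L$), where $\tau_j$ is the primitive root of the product of the weights $w_i$ or $w_i^*$ read off at the crossings. The outcome is the resolution $\eta\#_R\beta$ of $\alpha$ with $m$ parallel copies of $\beta$, where $m$ is the number of irreducible factors of $\tau_j$.

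\textbf{The gap is in your global step.} Since grafting only goes forward, meeting at a common further-grafted structure gives no path from $\sigma_1$ to $\sigma_2$ unless you can come back. The reversibility you propose for this is false as stated. Every grafting inserts an annulus containing regions that develop outside $\Omega$, so the convex structure $\sigma_c$ is never the result of a grafting. Hence $\sigma=\sigma_c$, $\tau=\mathbf{Gr}(\sigma_c,\alpha)$ is a counterexample. Where such reversibility could hold (exotic structures of a common weight type), it is a statement of the same kind and difficulty as the theorem itself, and your plan gives no proof of it. The same objection applies to walking between pants decompositions: nothing bounds such a walk by $6g$, and it again needs backward moves. Your plan also never uses the weight-type hypothesis in an essential way, whereas the paper needs it at one specific point.

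\textbf{The missing idea is a forward-only route through a normal form.} All steps below are forward graftings.
\begin{enumerate}
\item Because $WT(\eta_1)=WT(\eta_2)$, you can graft parallel copies along each component of $\eta_1$ so that every weight becomes one common word $\tau_1\cdots\tau_{3g-3}$ with $\tau_i\in\{w_i,w_i^*\}$. This is where the hypothesis enters.
\item Collapse all weights onto a single non-separating curve $\delta_0$ (Proposition~\ref{prop:5.4}). First graft to make the weight self-dual, $ww^*$, so orientations become irrelevant and the unoriented merging lemma of \cite{CDF:gra} applies (Lemmas~\ref{lem:5.2} and~\ref{lem:5.3}). Then recover weight $w$ by grafting along an auxiliary disjoint curve and a crossing $\beta_R$, followed by a Dehn-twist argument. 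The paper counts at most $6$ multi-graftings for steps 1 and 2.
\item From $(\delta_0,a_0)$, with $a_i=\tau_{i+1}\cdots\tau_{3g-3}$, split off the pants curves one at a time. Graft along $\delta_{i-1}$ and an auxiliary $\delta'_{i-1}$ carrying $\tau_i$, then along $(\beta_{i-1})_R$. Proposition~\ref{prop:4.2} then yields $(\overline{\gamma}_i,\tau_i)$ and $(\delta_i,a_i)$. The final genus-one piece is split at once.
\item Corollary~\ref{cor:4.4} transports $\{\overline{\gamma}_i\}$ to the target pants decomposition.
\end{enumerate}
At two multi-graftings per splitting step, this gives the bound $2(k+1)+6\le 6g$, with no backward move ever needed.
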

 
 As a direct consequence, given an exotic projective structure, one can return to it by repeating multi-graftings. To prove the result, we concretely construct graftable curves on real projective surfaces with Hitchin holonomy $\rho$. This construction is carried out using the method similar to that in \cite{CDF:gra}. In the process, we obtained the following result (see Proposition \ref{prop:3.2}).

 \begin{introtheorem}
 Let $M$ be a real projective surface with Hitchin holonomy $\rho$. For any essential simple closed curve $\beta$, there is a graftable simple closed curve on $M$ isotopic to $\beta$. 
 \end{introtheorem}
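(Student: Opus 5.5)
The plan is to use Goldman's description of $M$ as a multi-grafting of its associated convex structure $M_0=\Omega/\rho(\pi_1\Sigma)$ along grafting data $(\{\alpha_i\},\{w_i\})$, and to build the graftable curve by modifying $\beta$ piece by piece, in the spirit of \cite{CDF:gra}. I take "graftable" in the topological sense of \S 2.4: a simple closed curve $\gamma$, with $\rho(\gamma)$ loxodromic (as it is for every essential curve, since $\rho$ is Hitchin), whose lift $\tilde\gamma$ develops injectively onto an embedded arc in $\mathbb{RP}^2$. This arc should join the attracting and repelling fixed points of $\rho(\gamma)$ and avoid the third fixed point. Given such a curve, one can cut $M$ along $\gamma$ and glue in a Hopf-type annulus, whose developed image is a region bounded by the developed arc and its images under the appropriate rotations about the fixed points.

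\textbf{Step 1.} Put $\beta$ in minimal position with respect to the multicurve $\{\alpha_i\}$. If $\beta$ is disjoint from every $\alpha_i$, I isotope it to the $M_0$-geodesic representative, which lives in the convex part of $M$ away from the grafting annuli. That geodesic develops to a segment of $\partial$-to-$\partial$ type in $\Omega$ joining the fixed points of $\rho(\beta)$, and so it is graftable. If $\beta=\alpha_i$, a core of one of the inserted annuli or a boundary geodesic works directly.

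\textbf{Step 2.} In general $\beta$ decomposes into arcs $\beta_1,\dots,\beta_k$. These alternate between arcs in the convex complement $M_0\setminus\bigcup\alpha_i$, which I take to be geodesic segments, and arcs crossing a grafted annulus $A_i$. Each $A_i$ develops to a union of regions bounded by segments through the fixed points of $\rho(\alpha_i)$, as determined by the word $w_i$. Inside $A_i$ I choose the crossing arc to be a concatenation of segments that stay in a single sector. Concretely, the arc crosses the sectors corresponding to the letters of $w_i$ in order, and in each sector it follows a path transverse to the foliation by lines through a fixed point. This makes its development monotone with respect to a projective "angle" coordinate centred at the relevant fixed point.

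\textbf{Step 3.} I assemble the pieces into a lift $\tilde\beta$ and check that $D(\tilde\beta)$ is injective and accumulates at exactly the two fixed points of $\rho(\beta)$. For this I follow \cite{CDF:gra}: I pass to the $\langle\beta\rangle$-cover, an annulus with an $\mathbb{RP}^2$-structure whose holonomy is $\rho(\beta)$. On it I use the $\rho(\beta)$-invariant function recording the position relative to the fixed-point triangle, which in affine coordinates is essentially $\log|x|$ along the attracting/repelling direction. I then arrange that this function is strictly monotone along every piece of $\tilde\beta$. Monotonicity then gives embeddedness of the developed curve and the correct endpoints, and the $\rho(\beta)$-equivariance gives simplicity on $M$.

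\textbf{Main obstacle.} The hard part is Step 3 inside the grafted annuli. The developing map there is not injective; it wraps around the fixed points of $\rho(\alpha_i)$ according to $w_i$. So one has to show that the local pieces can be chosen monotone for the $\rho(\beta)$-coordinate, not just for the $\rho(\alpha_i)$-coordinate. My first attempt will be to insert, before entering each $A_i$, a long detour spiralling along $\alpha_i$ in the convex part, that is, replacing $\beta$ by a Dehn-twisted-looking arc that is then untwisted inside $A_i$ so that the isotopy class is unchanged. The point of the detour is that the developed image is pushed close to the axis of $\rho(\alpha_i)$, where both coordinates are controlled by the dynamics of loxodromics on the flag of fixed points. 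If that fails, the fallback is to do the construction on the $\langle\beta\rangle$-cover directly, using the fact that its ends are of convex type.
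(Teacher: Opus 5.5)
\textbf{Main gap: the target property is not Definition 2.4.} You say you use the notion of \S 2.4, but what you state is the $\mathbb{CP}^1$ picture: an injective developed arc from $p_-$ to $p_+$ avoiding $p_o$, to be thickened into a Hopf-type annulus by ``rotations about the fixed points''. In $PGL(3,\mathbb{R})$ the centralizer of a positive hyperbolic $B=\rho(\beta)$ is the diagonal group, so no such rotations exist. Grafting instead inserts $\widehat{T}\setminus\widetilde{dev}(\widetilde{\beta})$ for a special torus $T=T^{B}_{w}$. Graftability means that $D|_{\widetilde\beta}$ lifts injectively to $\widehat{T}$, which is a chain of copies of the triangles of $B$ glued in the pattern of $w$.

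The real content of the statement is choosing $w$ and building a curve that fits it, and your proposal never does this. In the paper the choice is $w=\tau_j$, the primitive root of the product of the weights $w_i$ or $w_i^{*}$ (according to the sign of each crossing) read along $\beta_j$. The curve $(\beta_j)_R$ is built so that its arc in each block $F^{\alpha_i}_{\pm *}$ develops across the matching hexagon $H^{B_j}_{\pm *}$, where $B_j=\rho(\beta_j)$. Consequently, in every period the developed curve crosses the invariant lines of $B_j$ through the saddle point. By equivariance it then accumulates at $p_o$.

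So the curves that actually work are not injective arcs from $p_-$ to $p_+$. A coordinate of type $\log|x|$ adapted to the fixed-point triangle of $B_j$ is not even defined along them, let alone monotone. This is also why $\beta_R$ and $\beta_L$, which embed in $T_{\tau_j}$ and $T_{\overline{\tau_j}}$ respectively, are different graftable curves (Remark 3.1). Even a successful Step 3 would therefore establish a far more restrictive property than needed. You give no reason to expect curves with that property once $\beta$ essentially crosses a grafting annulus, because crossing forces the development to wind around a fixed point of $\rho(\alpha_i)$ outside $\Omega$.

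\textbf{The hard step is not carried out.} Independently of the above, you identify the behaviour inside the annuli as the main obstacle and offer two remedies: detours spiralling along $\alpha_i$, or working on the $\langle\beta\rangle$-cover. You execute neither. Pushing the development toward the axis of $\rho(\alpha_i)$ does not remove the winding around the fixed points of $\rho(\alpha_i)$ that crossing $A_i$ imposes.

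What is missing is the paper's blockwise construction:
\begin{enumerate}
\item Decompose each grafting annulus into the blocks $F^{\alpha_i}_{\pm *}$ of four elementary annuli, each developing onto $H^{A_i}_{\pm *}$.
\item Join $p_R'$ to $p_R''$ by explicit piecewise-linear arcs whose segments, extended inside each triangle, meet the principal segment. This gives simplicity.
\item Check that each such arc lifts into $H^{B_j}_{\pm *}$, so that the concatenation lifts to $\widehat{T^{B_j}_{\tau_j}}$.
\item Obtain injectivity of this lift from the disjointness of $\triangle^{A_{i_1}}_1$ and $\triangle^{A_{i_2}}_1$, and of the corresponding $\triangle_4$'s, for $i_1\neq i_2$.
\end{enumerate}
Your Step 1, which treats curves missing the grafting locus in the convex part, agrees with the paper.
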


 Let $MG(\rho)$ be the oriented graph whose vertices are the projective structures with Hitchin holonomy $\rho$ and two vertices $\sigma_1$ and $\sigma_2$ are joined by an oriented edge from $\sigma_1$ to $\sigma_2$ if $\sigma_2$ is obtained from $\sigma_1$ by a multi-grafting. The main result implies that, given two vertices $\sigma_1$ and $\sigma_2$ of $MG(\rho)$, if the weight type of $\sigma_1$ is ``contained'' (in a certain sense) in the weight type of $\sigma_2$, they are joined from $\sigma_1$ to $\sigma_2$ (see Corollary \ref{cor:6.1}).  
 
 We use some surgery operations on multi-curves introduced by Luo \cite{Luo:multi} and later closely observed by Ito \cite[\S 2.5]{Ito:exo}. In this paper, however, we introduce new notation in \S 4.1 to avoid unnecessarily complexity. (See Remark \ref{rem:4.1}.)  

  This paper is organized as follows. In \S 2, we discuss the definition of graftable curves and the grafting process. In \S 3, we construct graftable curves on non-convex real projective surfaces with Hitchin holonomy. This construction yields Theorem B. In \S 4, we describe the projective structures obtained by grafting a non-convex real projective structure along the curves constructed in \S 3. In \S 5, we prove Theorem A. In \S 6, we construct a model of $MG(\rho)$ associated with weight types that reflects part of its connectivity.

  \begin{acknowledgement}
    The author would like to thank for my supervisor, Shinpei Baba, for many discussion and helpful advice.
  \end{acknowledgement}
  
  \section{Graftable curves}

  We define graftable curves on general $\mathbb{RP}^2$-surfaces and describe the grafting process. In the case of $\mathbb{CP}^1$-structures, grafting is introduced in \cite[\S 2]{CDF:gra} and \cite[\S 1]{Goldman:fuc}. The basic concepts described in \S 2.1 are based on \cite[\S 3]{Goldman:fuc}.

\subsection{Special $\mathbb{RP}^2$-annuli and tori} 

This subsection introduces basic $\mathbb{RP}^2$-surfaces used for grafting.

 A \textit{special $\mathbb{RP}^2$-annulus} is a compact $\mathbb{RP}^2$-manifold with geodesic boundary homotopy-equivalent to a circle. Now we consider the special $\mathbb{RP}^2$-annulus whose holonomy is (\textit{positive}) \textit{hyperbolic}, i.e., it has three distinct positive eigenvalues. Let $A \in PGL(3,\mathbb{R})$ be a hyperbolic transformation. $A$ has three fixed points $p_+,p_-,p_o$. The fixed point $p_+$ corresponding to the largest eigenvalue of $A$ is called an \textit{attracting point}; similarly $p_-$ corresponding to the smallest eigenvalue and $p_o$ corresponding to the middle eigenvalue are called a \textit{repelling point} and a \textit {saddle point} respectively. The transformation $A$ preserves exactly three projective lines passing two of the fixed points $\overline{p_o p_+}$, $\overline{p_+ p_-}$ and $\overline{p_- p_o}$, and as well as triangular domains bounded by them. 

 An \textit{elementary annulus} is a special $\mathbb{RP}^2$-annulus whose interior is the quotient of an invariant triangular domain of the hyperbolic transformation $A$. Elementary annulus is fundamental in the following sense;

\begin{annulus decomposition theorem}
 Let $M$ be a special $\mathbb{RP}^2$-annulus. Then $M$ decomposes into elementary annuli.
\end{annulus decomposition theorem}

 Goldman proved this theorem and classified all special $\mathbb{RP}^2$-annuli in \cite[Corollary 3.6]{Goldman:fuc}. It was also presented in a more general setting in Choi's paper, see \cite[Appendix B]{Choi:admII}.

 A \textit{special $\mathbb{RP}^2$-torus} is a closed orientable $\mathbb{RP}^2$-manifold $T$ homeomorphic to a torus which possesses a simple closed geodesic $C$ whose holonomy is a hyperbolic transformation $A$ and such that the holonomy representation of $T$ surjects onto $\langle A \rangle$. 
 
 We obtain a special $\mathbb{RP}^2$-annulus $T|C$ by cutting $T$ along $C$. $T|C$ decomposes into elementary annuli: $T|C=E_1 \cup E_2 \cup \cdots \cup E_n$. Each elementary annulus $E_i$ is bounded by two simple closed geodesic, one developing to the principal line $\overline{p_+ p_-}$ and the other developing to $\overline{p_- p_o}$ or $\overline{p_o p_+}$. We denote the simple closed geodesics in $T|C$ which develop to $\overline{p_o p_+}$, $\overline{p_+ p_-}$ and  $\overline{p_- p_o}$ by $V_1$,$V_2$ and $V_3$, respectively. We may assume $C$ develops to the principal line without loss of generality. By ordering the simple closed geodesics from one boundary component of $T|C$ to the other, we obtain a finite sequence of $V_i$ ($i=1,2,3$). This sequence is subject to two constraints: (i) no $V_i$ may follow itself; (ii) $V_1$ and $V_3$ may never follow each other. Furthermore this sequence begins and ends with $V_2$.  The conditions (i) and (ii) imply that this sequence takes the form $(V_2 V_{i_1}) \cdots (V_2 V_{i_n})V_2$ where the $i_1, \cdots, i_n$ are either 1 or 3. Hence we may rewrite this sequence as a positive word $w(x,y)$ where $x$ represents $V_2V_1$ and $y$ represents $V_2V_3$. This word $w(x,y)$ consists of an even number of $x$ and an even number of $y$ because the two boundary components develop to the same principal segment.

 We say that a word $w(x,y)$ is \textit{completely even} if it consists of an even number of $x$ and an even number of $y$.

\begin{definition} \label{def:2.1}
  
A completely even word $w$ is \textit{primitive} if $w$ is non-trivial and it cannot be written as $w=(w')^n$ for some $n \ge 2$ and some completely even word $w'$. A completely even word $w$ is \textit{irreducible} if $w=w_1w_2$ implies $w_1=1$ or $w_2=1$.

Let ${}^t w$ denote the word obtained by reading $w$ from right to left and $\overline{w}$ denote the word obtained by interchanging $x$ and $y$.
We define $w^*$ to be $\overline{{}^t w} = {}^t {\overline{w}}$.
\end{definition}

 The set of isomorphism classes of special $\mathbb{RP}^2$-tori corresponds bijectively to the set of all cyclic equivalence classes of completely even words. We denote the special $\mathbb{RP}^2$-torus corresponding to a completely even word $w$ by $T^{A}_{w}$.

\subsection{Grafting torus}

 We focus on a simple closed curve $\alpha$ on a $\mathbb{RP}^2$-surface $M$ along which grafting can be performed. Suppose $\alpha$ has a hyperbolic holonomy $A$. Requiring that the holonomy of the original surface $M$ remain unchanged, we are led to consider a $\mathbb{RP}^2$-torus $T$ satisfying the following conditions:

 \begin{enumerate}
   \item[(i)] the curve $\alpha$ embeds into $T$,
   \item[(ii)] there exists a simple closed curve $\beta \subset T$ that intersects $\alpha$ exactly once and has trivial holonomy.
 \end{enumerate}

 The condition (ii) is equivalent to the holonomy representation of $T$ being surjective onto $\langle A \rangle$.

\begin{definition} \label{def:2.2}
 A \textit{grafting torus} is a $\mathbb{RP}^2$-torus whose holonomy representation is a surjection onto $\langle A \rangle$ generated by a hyperbolic transformation $A$.
\end{definition}

 A special $\mathbb{RP}^2$-torus is a grafting torus. A grafting torus has a simple closed curve whose holonomy is $A$. 

\begin{proposition} \label{prop:2.3}
 A grafting torus can be obtained as follows: let $m$ and $n$ are coprime integers, including the pair $(m,n)=(1,0)$. First, split a special $\mathbb{RP}^2$-torus $T$ which possesses a simple closed geodesic $C$ whose holonomy is $A^m$. Then re-glue $T|C$ via a projective automorphism $A^n$ of the tubular neighborhood of $C$.
\end{proposition}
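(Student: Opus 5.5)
Let $T$ be a grafting torus with holonomy $\rho\colon \pi_1(T)\cong\mathbb{Z}^2 \to \langle A\rangle$ surjective. The plan is to find inside $T$ a simple closed \emph{geodesic} $C$ with hyperbolic holonomy $A^m$. Cutting along $C$ then gives a special $\mathbb{RP}^2$-annulus, and we compare it with a special torus by re-gluing.

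First I would normalize the holonomy. Since $\rho$ surjects onto the infinite cyclic group $\langle A\rangle\cong\mathbb{Z}$, its kernel is a rank-one direct summand of $\mathbb{Z}^2$. So we can pick a basis $\alpha,\beta$ of $\pi_1(T)$ with $\rho(\alpha)=A$ and $\rho(\beta)=1$, and both classes are represented by simple closed curves. The developing map $D$ then descends to the cover $\widetilde T/\langle\beta\rangle$, which is an annulus with holonomy $\langle A\rangle$.

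Second, I would produce a closed geodesic. Since $T$ has no boundary and the holonomy is hyperbolic, I would use the dynamics of $A$: look at the preimages under $D$ of the three invariant lines $\overline{p_op_+}$, $\overline{p_+p_-}$, $\overline{p_-p_o}$. Their components in $T$ are $\langle A\rangle$-invariant geodesic laminations. I would argue, following Goldman's analysis in \cite[\S 3]{Goldman:fuc}, that some component is a closed geodesic $C$ rather than a non-compact leaf spiralling onto something. This uses compactness of $T$ together with the fact that the lifted structure on the annulus $\widetilde T/\langle\beta\rangle$ cannot develop injectively onto an invariant triangle: otherwise $T$ would be a quotient of a properly convex domain by a cyclic group, which is impossible for a closed torus.

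Since $C$ is simple on a torus, its class is primitive, say $C=a\alpha+b\beta$ with $\gcd(a,b)=1$, so the holonomy of $C$ is $A^{a}$. Set $m=a$. If $m=0$, the holonomy of $C$ is trivial and $C$ cannot develop onto a principal segment, so $m\neq 0$; up to orientation, $m>0$.

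Third, I would re-glue. Cutting $T$ along $C$ gives a special annulus $T|C$, which by the Annulus decomposition theorem is a union of elementary annuli. Its two boundary components develop onto the same principal line, and they are identified in $T$ by a projective map that commutes with $A$ and preserves a tubular neighborhood of $C$. The centralizer of $A$ acting on the neighborhood is generated essentially by $A$, and the possible twists must be compatible with the holonomy lying in $\langle A\rangle$. Because the transverse loop $\beta'$ dual to $C$ has holonomy in $\langle A\rangle$, the gluing map is $A^{k}$ for some $k$. Gluing $T|C$ instead by the identity gives a special $\mathbb{RP}^2$-torus $T'$ whose holonomy is generated by $A^m$, containing $C$. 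Re-gluing $T'|C$ via $A^n$ then recovers $T$, where $n$ is determined by $k$.

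Finally, I need coprimality. The holonomy of $T$ is generated by $A^m$ and $A^n$. Surjectivity onto $\langle A\rangle$ forces $\gcd(m,n)=1$, with $(m,n)=(1,0)$ occurring when $T$ itself is special.

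I expect the main obstacle to be the second step, the existence of a closed geodesic $C$ with hyperbolic holonomy in an arbitrary grafting torus. My first attempt would be to take the closure of a leaf of the preimage of the principal line and show, using the attracting and repelling behaviour of $A$ together with compactness, that it must close up. If that fails, I would fall back on Choi's decomposition results \cite[Appendix B]{Choi:admII}.
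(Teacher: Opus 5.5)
Your overall architecture matches the paper's: find a component $C$ of the preimage of an $A$-invariant line, cut along it, reglue by the identity to get a special torus, read off $m$ and $n$, and get coprimality from surjectivity. However, the step you flag as the main obstacle is missing its key idea, and your plan for it targets a difficulty that does not exist.

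\textbf{Spiralling is not an issue.} Each invariant line $l_i$ is a closed embedded curve in $\mathbb{RP}^2$ and $D$ is a local diffeomorphism. So $D^{-1}(l_i)$ is a closed, properly embedded $1$-submanifold of $\widetilde T$, and it is deck-invariant because $A(l_i)=l_i$. Its image in $T$ is therefore closed and locally a single arc, hence a finite union of simple closed curves; no laminations or dynamics are needed.

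\textbf{The real gap is nonemptiness.} What is \emph{not} automatic is that $D^{-1}(l_i)\neq\emptyset$ for some $i$. Your argument only rules out that $D$ is \emph{injective} onto a triangle. That leaves open a non-injective immersion of $\widetilde T$ into an open triangle. The dichotomy ``$D(\widetilde T)$ meets a line, or $D$ is a diffeomorphism onto a triangle'' is exactly what still needs proof.

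The paper proves it with a completeness argument. Work in eigen-coordinates of $A$, where $A$ acts by $(x,y)\mapsto(ax,by)$. On $\mathbb{RP}^2\setminus(l_1\cup l_2\cup l_3)$ take the $A$-invariant metric $g=dx^2/x^2+dy^2/y^2$. Via $(\log|x|,\log|y|)$ this makes each of the four triangles a complete flat plane on which $A$ acts by translations.

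Suppose $D(\widetilde T)$ missed the lines. Then $D^*g$ is $\pi_1(T)$-invariant, because the holonomy lies in $\langle A\rangle$, so it descends to the compact torus and is complete. Hence $D$ is a local isometry from a complete manifold, so it is a covering of a simply connected triangle and therefore injective. Injectivity contradicts the nontrivial deck transformation $\beta$ with $D\circ\beta=\rho(\beta)\circ D=D$. Without this argument, or an equivalent one, your Step 2 does not produce $C$.

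A smaller point: your reason for $m\neq 0$ (``$C$ cannot develop onto a principal segment'') is not a contradiction by itself. A component with trivial holonomy would simply develop onto the whole line, so this case needs a separate argument; the paper also asserts $m\neq0$ without proof.
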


\begin{proof}
 Let $T$ be a torus and $p \colon \widetilde{T} \to T$ be an universal covering. Suppose $dev \colon \widetilde{T} \to \mathbb{RP}^2$ be a devloping map corresponding to a real projective structure on $T$ whose holonomy is a surjection onto $\langle A \rangle$. Assume $dev(\widetilde{T})$ does not intersect with the lines $l_1,l_2,l_3 (l=l_1 \cup l_2 \cup l_3)$ on $\mathbb{RP}^2$ preserved by $A$. We choose a Riemannian metric $g$ on $\mathbb{RP}^2 \setminus l$ preserved by $A$ for example, $g = \frac{{dx}^2}{x^2}+\frac{{dy}^2}{y^2}$ in an affine chart coordinate. Then we get a Riemannian metric $g'$ on $T$ such that $p^{*}g'=dev^{*}g$. $g'$ is complete because $T$ is compact, and hence $dev^{*}g$ is complete. Therfore $dev \colon \widetilde{T} \to \mathbb{RP}^2$ is a diffeomorphism onto a triangle domain bounded by $l_1,l_2,l_3$. This implies $\pi_{1}(T) \cong \mathbb{Z}$. This is a contradiction.

 We assume $dev(\widetilde{T})$ intersects with $l_i$. ${dev}^{-1}(l_i)$ is preserved by the deck transformations. Therefore $p({dev}^{-1}(l_i))$ is a compact submanifold in $T$. Let $C$ be a connected component of $p({dev}^{-1}(l_i))$. $C$ is a simple closed curve which develops to the line $l_i$. $T|C$ is a hyperbolic annulus with convex boundaries. The annulus decompostion theorem implies that $T|C$ decomposes into elementary annuli.

 The following holds because of the holonomy $\pi_{1}(T) \twoheadrightarrow \langle A \rangle$;
\begin{itemize}
  \item the holonomy of the geodesic $C$ is $A^m (m \in \mathbb{Z} \setminus \{0\})$,
  \item the real projective structure on T is obtained by gluing $T|C$ along $C$ by a projective isomorphism $A^n$,
  \item $m$ and $n$ are coprime.
\end{itemize}

\end{proof}

\subsection{Graftable curves}
 Let $M$ be a $\mathbb{RP}^2$-surface and $A \in PGL(3,\mathbb{R})$ be a (positive) hyperbolic transformation. The \textit{holonomy covering space} $\widehat{T}$ of a special $\mathbb{RP}^2$-torus T is the quotient of the universal covering space $\widetilde{T}$ by the kernel of the holonomy representation. The developing map $\widetilde{T} \to \mathbb{RP}^2$ of $T$ descends to the map $\widehat{T} \to \mathbb{RP}^2$.

\begin{definition} \label{def:2.4}
 A simple closed curve $\alpha$ on $M$ is \textit{graftable} if the holonomy $A$ of $\alpha$ is hyperbolic and there is a special $\mathbb{RP}^2$-torus $T$ with the holonomy $\pi_{1}(T) \twoheadrightarrow \langle A \rangle$ such that, letting $\widetilde{\alpha} \subset \widetilde{M}$ be a lift of $\alpha$, 

(i) there is a lift $\widetilde{dev} \colon \widetilde{\alpha} \to \widehat{T}$ of the developing map restricted to $\widetilde{\alpha}$, and

(ii) this lift is injective (embedding).

\begin{figure}[h]
  \centering
  \includegraphics[width=4cm]{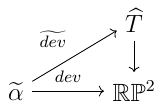}
\end{figure}
\end{definition}

 A \textit{graftble multi-curve} is a multi-curve consisting of disjoint graftable simple closed curves.

\subsection{Grafting along closed curves}

\begin{proposition} \label{prop:2.5}
 Let $\alpha$ be a graftable simple closed curve on a $\mathbb{RP}^2$-surface $M$. Then $\alpha$ can be projectively embedded in a grafting torus.
\end{proposition}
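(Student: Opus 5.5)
The plan is to take the special $\mathbb{RP}^2$-torus $T$ supplied by Definition \ref{def:2.4} and pass to its holonomy cover. By the definition of $\widehat{T}$, the holonomy representation factors through $\pi_1(T)/\ker \cong \langle A\rangle \cong \mathbb{Z}$. So $\widehat{T}$ is an annulus (infinite cyclic cover of the torus) with deck group $\langle \tau\rangle \cong \mathbb{Z}$. The developing map $\widehat{dev}\colon \widehat{T}\to\mathbb{RP}^2$ satisfies $\widehat{dev}\circ\tau = A\circ \widehat{dev}$, after replacing $\tau$ by $\tau^{-1}$ if needed.

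\textbf{Step 1 (the lift is equivariant).} Let $\gamma\in\pi_1(M)$ be the element represented by $\alpha$ that stabilizes $\widetilde\alpha$. By (i) of Definition \ref{def:2.4} we have a lift $f=\widetilde{dev}\colon\widetilde\alpha\to\widehat{T}$ with $\widehat{dev}\circ f = dev|_{\widetilde\alpha}$. Both $f\circ\gamma$ and $\tau\circ f$ are lifts of $A\circ dev|_{\widetilde\alpha}$ through the local diffeomorphism $\widehat{dev}$. The point-set $\widehat{dev}^{-1}(x)$ is discrete, so by uniqueness of lifts it suffices to match them at one point. Here I expect to argue that $f(\gamma \tilde x)$ and $\tau f(\tilde x)$ both lie over $A\,dev(\tilde x)$, and then use $\pi_1$ / path-lifting to conclude $f\circ\gamma=\tau^k\circ f$ for some $k$. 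Since $\widehat{dev}\circ\tau^k=A^k\widehat{dev}$ and the left side equals $A\widehat{dev}f$, we get $A^{k-1}$ fixing an open set, hence $k=1$.

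\textbf{Step 2 (embedding in the torus).} Since $f$ is injective by (ii) and $\gamma$-to-$\tau$ equivariant, it descends to an embedding $\bar f\colon \alpha=\widetilde\alpha/\langle\gamma\rangle \to \widehat{T}/\langle\tau\rangle = T$. This is injective because equivariance plus injectivity of $f$ means $f(\tilde x)=\tau^j f(\tilde y)=f(\gamma^j\tilde y)$ forces $\tilde x=\gamma^j\tilde y$. The map $\bar f$ is compatible with developing maps, so a neighborhood of $\alpha$ in $M$ is projectively identified with a neighborhood of $\bar f(\alpha)$ in $T$. Since $T$ is a special torus, hence a grafting torus with holonomy onto $\langle A\rangle$, this finishes the proof. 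The image curve is essential, carrying holonomy $A$.

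\textbf{Main obstacle.} The delicate point is Step 1: ensuring the lift is equivariant, rather than twisted by some power of $\tau$ or a deck element mismatched with $\gamma$. A second delicate point is checking that the local identification of neighborhoods is a genuine projective embedding of a collar, not just of the curve. For the collar, I would extend $f$ to a thin strip around $\widetilde\alpha$ by lifting $dev$ on a $\gamma$-invariant neighborhood. Injectivity of this extension should follow from injectivity on $\widetilde\alpha$ and compactness of $\alpha$, by a standard tubular-neighborhood argument.
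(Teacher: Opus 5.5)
You have a genuine gap in Step~1, and the equivariance it aims for is false in general. Uniqueness of lifts through the local diffeomorphism $\widehat{dev}$ is fine. But nothing forces $f(\gamma\tilde x)=\tau f(\tilde x)$. The fibre of $\widehat{dev}$ over $A\,dev(\tilde x)$ may contain points other than $\tau f(\tilde x)$. Moreover, $\widehat T$ can carry nontrivial projective automorphisms covering the identity, so an automorphism of $\widehat T$ covering $A$ need not be $\tau$.

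Here is a concrete case. For $T=T^A_{xxxx}$, the holonomy cover is a cyclic chain of eight triangle strips developing in turn onto $\triangle^A_1,\dots,\triangle^A_4,\triangle^A_1,\dots,\triangle^A_4$. So $\widehat{dev}$ is a connected double cover of $H^A_{+xx}=\mathbb{RP}^2\setminus(\overline{p_-p_o}\cup\{p_+\})$, and its deck involution $r$ (a half-turn of the annulus) commutes with $\tau$. Put $M=\widehat T/\langle\tau r\rangle$, and let $\alpha$ be the image of a $\tau r$-invariant embedded line $L\subset\widehat T$. Then Definition~\ref{def:2.4} holds for $\alpha$ with this $T$; the lift is the covering map $\widetilde M\to\widehat T$ restricted to $\widetilde\alpha$, which is a homeomorphism onto $L$. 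Yet the only lifts of $dev|_{\widetilde\alpha}$ are $f$ and $r\circ f$, and both satisfy $f\circ\gamma=\tau r\circ f$. So your map $\bar f\colon\alpha\to\widehat T/\langle\tau\rangle=T$ is not well defined. In fact no collar of $\alpha$ embeds projectively in this $T$ at all, so the conclusion of your Step~2 (embedding in the given special torus) is stronger than what is true.

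The missing idea is the one the paper uses: do not insist on the deck generator. Take a projective automorphism $\Phi$ of $\widehat T$ covering $A$ with $\Phi\circ f=f\circ\gamma$ (in the example $\Phi=\tau r$), and embed $\alpha$ in $\widehat T/\langle\Phi\rangle$. This torus has holonomy onto $\langle A\rangle$, so it is a grafting torus. In general, however, it is neither $T$ nor special: in the example its closed geodesics have holonomy $A^2$, the case $m=2$ of Proposition~\ref{prop:2.3}. That is exactly why the statement promises only \emph{a} grafting torus, cf.\ Remark~\ref{rem:2.6}.

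Your uniqueness-of-lifts observation is the right tool to upgrade $\Phi(f(\tilde x))=f(\gamma\tilde x)$ at one point to equivariance along all of $\widetilde\alpha$. Your descent and collar arguments then go through with $\Phi$ in place of $\tau$. What remains is to produce $\Phi$, whose existence the paper simply asserts. One way is via the strip decomposition of $\widehat T$. After cancelling back-and-forth crossings, the lift moves monotonically through the cyclic chain of strips and advances by the same amount $d$ in each period. This forces the chain of triangle types to be $d$-periodic, and you can take $\Phi=\tau\circ r_d$ with $r_d$ the rotation by $d$.
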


\begin{proof}
 There is a special $\mathbb{RP}^2$-torus $T$ satisfying two conditions (i),(ii) in the definition. Let $\theta \in \pi_1(M)$ be the deck transformation corresponding to $\alpha$. We have a projective automorphism $f$ of the holonomy covering space $\widehat{T}$ such that the following diagram

\begin{figure}[H]
    \centering
    
    \begin{minipage}[h]{0.24\textwidth}
        \centering
        \includegraphics[width=3cm]{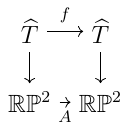}

    \end{minipage}
    \begin{minipage}[h]{0.24\textwidth}
        \centering
        \includegraphics[width=3cm]{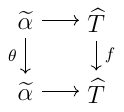}
    \end{minipage}
    
\end{figure}

commutes.
$\langle f \rangle$ acts freely and properly discontinuously on $\widehat{T}$ and the simple closed curve $\alpha$ can be projectively embedded in a grafting torus $\widehat{T} /{\langle f \rangle}$.
\end{proof}

\begin{remark} \label{rem:2.6}
 Let $\alpha$ be a graftable simple closed curve. There are many grafting tori which $\alpha$ can be projectively embedded in.  
\end{remark}

 If $\alpha$ is a graftable simple closed curve, one can produce another projective structure without changing the holonomy by grafting along $\alpha$. The construction is the followings (compared to \cite[\S 2.2]{CDF:gra} and also compared to \cite[\S 1 and \S 4.2]{Goldman:fuc}). We cut the universal cover $\widetilde{\Sigma}$ along all lifts $\widetilde{\alpha}$ of the simple closed curve $\alpha$. Along each such lift, we insert a copy of the projective strip $\widehat{T} \setminus \widetilde{dev}(\widetilde{\alpha})$. The gluing is carried out using the lift of the developing map. We then obtain a new universal cover $\widetilde{\Sigma}'$, together with a new developing map $D' \colon \widetilde{\Sigma}' \to \mathbb{RP}^2$, which is defined by $D$ on $\widetilde{\Sigma}$ and by the identity on $\widehat{T} \setminus \widetilde{dev}(\widetilde{\alpha})$. The space $\widetilde{\Sigma}'$ carries a natural action of $\pi_1(\Sigma)$ induced from the original one. The action is free and properly discontinuous, and the extended developing map $D'$ is $\rho$-equivariant. Hence, this defines a new real projective structure on $\Sigma$. 
 
 By Proposition \ref{prop:2.5}, there is a grafting torus $\widehat{T} / \langle f \rangle$ which $\alpha$ can be projectively embedded in. Therefore the grafting can be viewed as a cut-and-paste procedure directly in $\Sigma$, which cuts $\Sigma$ along $\alpha$ and glues back the cylinder $(\widehat{T} / \langle f \rangle)|\alpha$.

 We denote this new real projective structure on $\Sigma$ by $\mathbf{Gr}(\sigma,(\alpha,\widehat{T} / \langle f \rangle))$.

\subsection{Isotopy of graftable curves}

\begin{definition} \label{def:2.7}
  Let $\alpha$ and ${\alpha}'$ be graftable simple closed curves on a real projective surface $M$.

  $\alpha$ and ${\alpha}'$ are \textit{isotopic} (as graftable curves) if there is a special $\mathbb{RP}^2$-torus $T$ and an isotopy $F \colon S^1 \times [0,1] \to M$ such that

(i) there is a lift of $dev \circ \widetilde{F}$,

(ii) $\overline{F_t} \colon \mathbb{R} \times \{t\} \to \widehat{T}$ is injective at each level $t \in [0,1]$.

\begin{figure}[h]
  \centering
  \includegraphics[width=8cm]{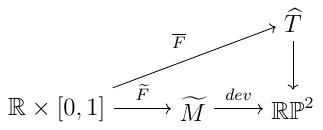}
\end{figure}

\end{definition}

 If two graftable simple closed curves $\alpha$ and ${\alpha}'$ are isotopic, then  $\alpha$ and ${\alpha}'$ are both projectively embedded in the same grafting torus $T$. Then, the resulting real projective structures $\mathbf{Gr}(M,(\alpha, T))$ and $\mathbf{Gr}(M,({\alpha}', T))$ are equivarent.

\section{Construction of graftable curves}

\subsection{Convex geometry}

 In this subsection, we review basic notions and results on convex real projective geometry, and prepare the notation that will be used for in the latter subsection.

 A domain $\Omega \subset \mathbb{RP}^2$ is \textit{convex} if it is contained in some affine chart and convex in the usual sense. It is called \textit{properly convex} if its closure $\overline{\Omega}$ is contained in the affine chart. Furthermore, $\Omega$ is \textit{strictly convex} if it is properly convex and its boundary $\partial \Omega$ contains no line segment.

 Let $\Sigma$ be a closed oriented surface with genus $> 1$. A \textit{convex real projective structure} on $\Sigma$ is a real projective structure whose developing map $dev \colon \widetilde{\Sigma} \to \mathbb{RP}^2$ is a diffeomorphism onto a convex domain $\Omega$ in $\mathbb{RP}^2$. The holonomy representation $\rho \colon \pi_1(\Sigma) \to PGL(3,\mathbb{R})$ is a \textit{Hitchin representation}, i.e., it can be deformed to a faithful and discrete representation. Conversely, Goldman and Choi \cite{CG:hit} proved that, given a Hitchin representation $\rho \colon \pi_1(\Sigma) \to PGL(3,\mathbb{R})$, there is a unique convex real projective structure whose holonomy representation $\rho$. In this case, $\Omega$ is strictly convex and its boundary $\partial \Omega$ is class of $C^1$ (see \cite[Theorem 3.2 (1)]{Goldman:con}). The universal cover $\widetilde{\Sigma}$ is identified with $\Omega$, and the surface $\Sigma$ is realized as the quotient $\Omega / \rho(\pi_1(\Sigma))$.

 Each non-trivial element $A$ of the holonomy group $\rho(\pi_1(\Sigma))$ is positive hyperbolic (see \cite[Lemma 4.3]{Choi:admII} and \cite[Theorem 3.2 (3)]{Goldman:con}). The attracting point $p_{+}$ and the repelling point $p_{-}$ lie on the boundary $\partial \Omega$, while the saddle point $p_{o}$ lies outside of $\Omega$. The principal line $\overline{p_{+}p_{-}}$ separates $\Omega$ into two regions $\Omega_{+}$ and $\Omega_{-}$, determined by the orientation of the principal segment in $\Omega$. There are four triangular domains $\triangle^{A}_{k}$ ($k=1,2,3,4$) preserved by $A$ so that $\Omega_{+} \subset \triangle^{A}_{1}$, $\Omega_{-} \subset \triangle^{A}_{4}$, $\overline{\triangle^{A}_{1}} \cap \overline{\triangle^{A}_{2}} \subset \overline{p_{o}p_{+}}$, $\overline{\triangle^{A}_{3}} \cap \overline{\triangle^{A}_{1}} \subset \overline{p_{-}p_{o}}$; see Figure \ref{fig:1}. We denote by $l_{i}^{j}$ the (open) line segment in $\overline{\triangle^{A}_{i}} \cap \overline{\triangle^{A}_{j}}$ connecting two of the fixed points. We define the hexagonal domains $H^{A}_{\pm *}$ consisting of the copies of $\triangle^{A}_k$ and $l_{i}^{j}$:  \[H^{A}_{+xx} = l_{4}^{1} \cup \triangle^{A}_1 \cup l_{1}^{2} \cup \triangle^{A}_2 \cup l_{2}^{3} \cup \triangle^{A}_3 \cup l_{3}^{4} \cup \triangle^{A}_4,\]
 \[H^{A}_{-xx} = l_{1}^{4} \cup \triangle^{A}_4 \cup l_{4}^{3} \cup \triangle^{A}_3 \cup l_{3}^{2} \cup \triangle^{A}_2 \cup l_{2}^{1} \cup \triangle^{A}_1,\]
 \[H^{A}_{+xy} = l_{4}^{1} \cup \triangle^{A}_1 \cup l_{1}^{2} \cup \triangle^{A}_2 \cup l_{2}^{3} \cup \triangle^{A}_3 \cup l_{3}^{1} \cup \triangle^{A}_1,\]
 \[H^{A}_{-xy} = l_{1}^{4} \cup \triangle^{A}_4 \cup l_{4}^{3} \cup \triangle^{A}_3 \cup l_{3}^{2} \cup \triangle^{A}_2 \cup l_{2}^{4} \cup \triangle^{A}_4,\]
 \[H^{A}_{+yy} = l_{4}^{1} \cup \triangle^{A}_1 \cup l_{1}^{3} \cup \triangle^{A}_3 \cup l_{3}^{2} \cup \triangle^{A}_2 \cup l_{2}^{4} \cup \triangle^{A}_4,\]
 \[H^{A}_{-yy} = l_{1}^{4} \cup \triangle^{A}_4 \cup l_{4}^{2} \cup \triangle^{A}_2 \cup l_{2}^{3} \cup \triangle^{A}_3 \cup l_{3}^{1} \cup \triangle^{A}_1,\]
 \[H^{A}_{+yx} = l_{4}^{1} \cup \triangle^{A}_1 \cup l_{1}^{3} \cup \triangle^{A}_3 \cup l_{3}^{2} \cup \triangle^{A}_2 \cup l_{2}^{1} \cup \triangle^{A}_1,\]
 \[H^{A}_{-yx} = l_{1}^{4} \cup \triangle^{A}_4 \cup l_{4}^{2} \cup \triangle^{A}_2 \cup l_{2}^{3} \cup \triangle^{A}_3 \cup l_{3}^{4} \cup \triangle^{A}_4.\]
 
 A strictly convex domain $\Omega$ carries a natural metric, called the \textit{Hilbert metric}. For two distinct points $x,y \in \Omega$, let $l$ be the line through $x$ and $y$, and let $a,b \in \partial \Omega$ be the intersection points of $l$ with the boundary, ordered so that $a,x,y,b$ lie on $l$ in this order. The Hilbert distance $d_{H}$ between $x$ and $y$ is defined by 
 \[d_{H}(x,y) = \frac{1}{2} [a,x,y,b],\] 
where $[a,x,y,b]$ denotes the closs ratio of the four collinear points. Since the projective automorphisms of $\Omega$ preserve $d_{H}$, the Hilbert metric on $\Omega$ descends to the metric on the convex real projective surface $\Sigma = \Omega / \rho(\pi_1(\Sigma))$. The curve $\alpha$ on $\Sigma$ is a geodesic if and only if each lift of $\alpha$ to the universal cover $\Omega$ is a line segment. Every non-trivial free homotopy class is represented by a unique closed geodesic. Furthermore, if the closed geodesic $\alpha$ is homotopic to a simple closed curve, then $\alpha$ is simple. Any two simple closed geodesics (or multi-geodesics) are in minimal position, realizing the minimal possible number of intersections in their free homotopy classes. (See \cite[\S 3.5]{Goldman:con})

 \begin{figure}[H]
  \begin{overpic}[width=6.5cm,clip]{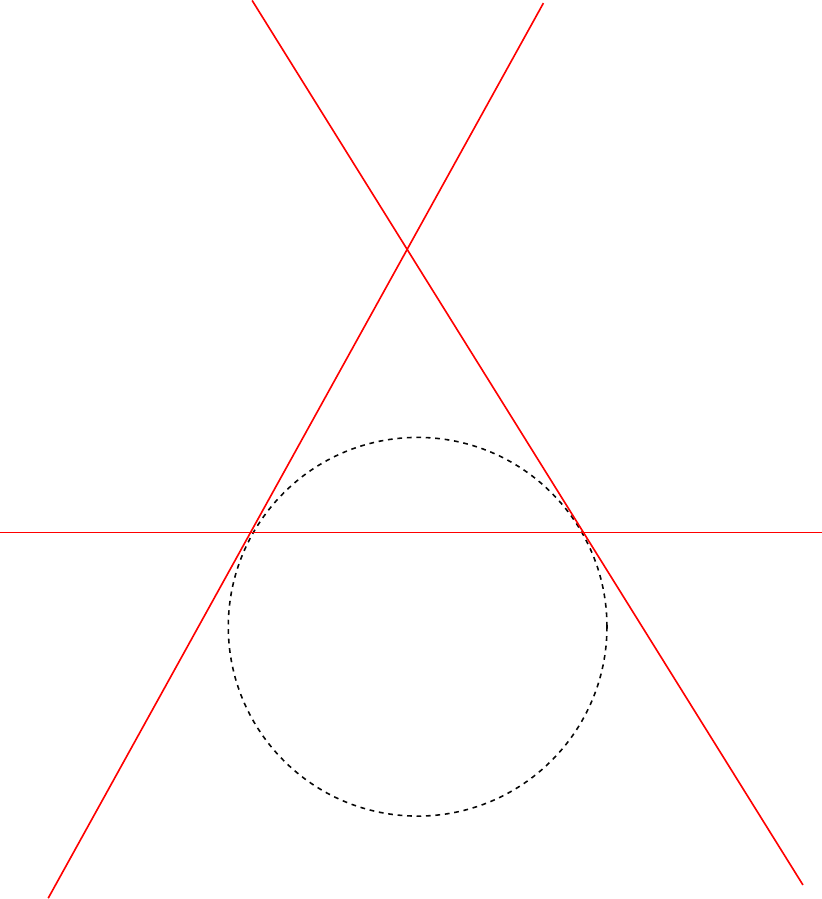}
  
  \put (45,85) {$p_{+}$}
  \put (132,85) {$p_{-}$}
  \put (95,145) {$p_{o}$}
  
  \put (87,87) {$\Omega_{+}$}
  \put (87,50) {$\Omega_{-}$}
  
  \put (86,110) {$\triangle^{A}_{1}$}
  \put (30,135) {$\triangle^{A}_{2}$}
  \put (165,50) {$\triangle^{A}_{2}$}
  \put (140,135) {$\triangle^{A}_{3}$}
  \put (10,50) {$\triangle^{A}_{3}$}
  \put (86,180) {$\triangle^{A}_{4}$}
  \put (86,25) {$\triangle^{A}_{4}$}
  
  \end{overpic}

  \caption{The triangular domains and lines preserved by a hyperbolic transformation $A$.}
  \label{fig:1}
\end{figure}

\subsection{Real projective strctures with Hitchin holonomy}
 We denote by $\sigma_c$ the convex real projective structure on the surface $\Sigma_{c} = \Omega/\rho(\pi_1(\Sigma))$ corresponding to a Hitchin representation $\rho \colon \pi_1(\Sigma) \to PGL(3,\mathbb{R})$. The developing map of this structure $\sigma_c$ is the diffeomorphism onto a convex domain $\Omega \subset \mathbb{RP}^2$. Hence, any simple closed curve $\alpha$, which has a hyperbolic holonomy $A \in PGL(3,\mathbb{R})$, is graftable and may be projectively embedded in any special $\mathbb{RP}^2$-torus $T^{A}_{w}$. Furthermore, the grafting $\mathbf{Gr}(\sigma_c, (\alpha, T^{A}_{w}))$ depends only on the isotopy class of $\alpha$ as just a curve.

 \begin{theorem}[Goldman \cite{Goldman:fuc}]
  Every real projective structure $\sigma$ with Hitchin holonomy $\rho$ is obtained by grafting the convex structure $\sigma_c$ along a multi curve $\alpha = \{\alpha_i\}$.
 \end{theorem}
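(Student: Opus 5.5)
The plan follows Goldman's approach: decompose $\sigma$ along principal geodesics into convex pieces and annuli, identify the convex pieces with $\sigma_c$, and read off the annuli as grafting tori. Throughout, let $D\colon\widetilde{\Sigma}\to\mathbb{RP}^2$ be the developing map of $\sigma$ with holonomy $\rho$, and $\Omega$ the invariant strictly convex domain of $\sigma_c$.

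\textbf{Decomposition of $\sigma$.} Since each $\rho(\gamma)$, $\gamma\neq 1$, is positive hyperbolic with $p_\pm\in\partial\Omega$, the components of $D^{-1}(\Omega)$ and of $D^{-1}(\mathbb{RP}^2\setminus\overline{\Omega})$ are invariant under subgroups of $\pi_1(\Sigma)$. I would show that each component $U$ of $D^{-1}(\Omega)$ maps diffeomorphically onto its image. For this, pull back the Hilbert metric $d_H$ to $U$ and run a completeness argument like the one in the proof of Proposition \ref{prop:2.3}. The image is then $\Omega$ minus some half-planes cut off by principal segments $\overline{p_+p_-}$ of group elements.

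Conversely, the complement of the union of such components projects to a compact subsurface of $\Sigma$. Its boundary consists of closed geodesics developing onto principal lines, and its holonomy is cyclic, since a component with non-cyclic holonomy would have to meet $\Omega$. Each of its components is therefore a special $\mathbb{RP}^2$-annulus with hyperbolic holonomy $A_i=\rho(\alpha_i)$. By the annulus decomposition theorem it decomposes into elementary annuli, and its boundary develops onto the principal line of $A_i$. Its weight is a completely even word $w_i$, and gluing its two boundary components by $A_i$ gives a special torus $T^{A_i}_{w_i}$.

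\textbf{Reconstruction of $\sigma_c$ and identification with $\sigma$.} Removing these annuli and regluing the boundary curves produces a structure $\sigma'$ with holonomy $\rho$ in which every component of $D'^{-1}(\Omega)$ is all of $\Omega$. Hence $D'$ is a covering onto $\Omega$, hence a diffeomorphism, and $\sigma'=\sigma_c$ by Choi--Goldman uniqueness \cite{CG:hit}. The curves $\alpha_i$ become disjoint simple closed geodesics of $\sigma_c$; they are simple because they are embedded in $\Sigma$ and pairwise non-homotopic after merging parallel annuli. By the remark preceding the theorem they are graftable in $\sigma_c$. Finally, grafting $T^{A_i}_{w_i}$ along $\alpha_i$, as in \S2.4, reinserts exactly the removed annuli, so
\[\sigma=\mathbf{Gr}(\sigma_c,\{(\alpha_i,T^{A_i}_{w_i})\}).\]

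\textbf{Main obstacle.} The hardest step is the first: showing that components of $D^{-1}(\Omega)$ embed, and that the non-convex part consists of annuli bounded by principal geodesics rather than something more wild. I would first attempt this through the Hilbert-metric completeness argument. If that fails, I would fall back on Choi's convex decomposition theorem \cite{Choi:admII}, which splits any $\mathbb{RP}^2$-surface into convex pieces and special annuli along principal geodesics. Hitchin holonomy then forces every convex piece with non-abelian holonomy to be a subsurface of $\sigma_c$.
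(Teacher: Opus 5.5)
There is a genuine gap, and it begins with a misidentification of the pieces. In the developing picture the grafting annulus along $\alpha_i$ is not disjoint from $\Omega$. It develops onto the regions $H^{A_i}_{\pm *}$, and these contain $\Omega_+\subset\triangle^{A_i}_1$ and $\Omega_-\subset\triangle^{A_i}_4$.

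Suppose your completeness argument for the pulled-back Hilbert metric on a component $U$ of $D^{-1}(\Omega)$ goes through. It plausibly does, since by strict convexity points developing near $\partial\Omega$ lie at infinite Hilbert distance. Then it proves more than you state: $D|_U$ is a covering of the simply connected $\Omega$, hence a diffeomorphism onto all of $\Omega$, not onto ``$\Omega$ minus half-planes''.

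The frontier of $U$ then develops onto arcs of $\partial\Omega$. So the complementary pieces are bounded by curves developing into the strictly convex $C^1$ curve $\partial\Omega$, not onto principal lines. This has three consequences:
\begin{enumerate}
\item These pieces are not special $\mathbb{RP}^2$-annuli, so the annulus decomposition theorem does not apply to them.
\item Your remove-and-reglue step breaks down as written. The two boundary curves of such a piece develop onto the two different arcs of $\partial\Omega$ joining $p_+$ and $p_-$, so they cannot be glued by the identity on developing images.
\item The principal geodesics bounding the actual grafting annuli lie inside the adjacent components $U\cong\Omega$. You would still have to locate them there, as quotients of principal segments.
\end{enumerate}

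Second, the decisive claim that each complementary piece has cyclic holonomy is only asserted (``would have to meet $\Omega$''). You give no argument that a subsurface with non-cyclic holonomy cannot develop into $\mathbb{RP}^2\setminus\overline{\Omega}$, and this is exactly where a global input is needed. The paper, citing Goldman, lifts to $(\widetilde{\mathbb{RP}^2},SL(3,\mathbb{R}))$ and splits $\Sigma=\Sigma^+\cup\Sigma^-$ by the developing section of the flat bundle $\mathcal{E}_\sigma$. It then computes the Euler class of $\mathcal{E}_\sigma$ to force every component of $\Sigma^-$ to be an annulus. Your plan has no substitute for this step.

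Passing to the double cover is also needed to read off completely even words. In $\mathbb{RP}^2$, take a single $xyxy$-annulus. The curve reached after the subword $xy$ develops onto the principal segment inside $\Omega$, because $H^{A}_{+xy}$ ends in $\triangle^{A}_1$. So $D^{-1}(\Omega)$ acquires an extra cyclic component there, and your pieces would carry the word $xy$, which is not completely even. In $\widetilde{\mathbb{RP}^2}$ that curve develops onto the antipodal copy of $\Omega$ instead.

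Finally, the fallback via Choi's decomposition is a legitimate alternative route. However, the sentence ``Hitchin holonomy forces every convex piece to be a subsurface of $\sigma_c$'' would then carry the whole content and needs its own proof. You would have to show that each piece develops onto the region of $\Omega$ cut out by the principal segments of its boundary holonomies, and that the pieces reassemble into $\Sigma_c$ without twisting.
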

 This family of pairs of simple closed curves and irreducible completely even words can be reconstructed from $\sigma$ as follows: let $\widetilde{\mathbb{RP}^2}$ be the double cover of $\mathbb{RP}^2$. Since $\Sigma$ is orientable, the real projective structure $\sigma$ can be lifted to the ($\widetilde{\mathbb{RP}^2}$ , $SL(3,\mathbb{R})$)-structure. The flat $\widetilde{\mathbb{RP}^2}$-bundle $\mathcal{E}_{\sigma}$ over $\Sigma$ decomposes into two subbundles the $\Omega$-bundle $\mathcal{E}^{+}_{\sigma}$ and the $(\widetilde{\mathbb{RP}^2} \setminus \Omega)$-bundle $\mathcal{E}^{-}_{\sigma}$ since the holonomy group preserves the decomposition $\mathbb{RP}^2 = \Omega \cup (\widetilde{\mathbb{RP}^2} \setminus \Omega)$. The developing section $\mathcal{D}_{\sigma} \colon \Sigma \to \mathcal{E}_{\sigma}$ induces a decomposition of $\Sigma$ into subsurfaces ${\Sigma}^{+}={\mathcal{D}_{\sigma}}^{-1}(\mathcal{E}^{+}_{\sigma})$ and ${\Sigma}^{-} = {\mathcal{D}_{\sigma}}^{-1}(\mathcal{E}^{-}_{\sigma})$. Goldman proved that each component of ${\Sigma}^{-}$ need to be an annulus by computing the Euler class of the flat bundle $\mathcal{E}_{\sigma}$.
 
 Now consider a disjoint collection ${\alpha}_i$ of homotopically nontrivial simple closed curves, that is, an isotopy class of a multi-curve. For each $i$ choose an irreducible completely even word $w_i$. We call such a collection $\eta = (\alpha_1, \cdots, \alpha_n ,w_1, \cdots, w_n)$ a \textit{grafting data} and denote by $\mathscr{GD}$ the set of grafting data. Two grafting data $({\alpha}_i,w_i)$ and $({{\alpha}_i}^{-1},{w_i}^{*})$ are equivalent. The real projective structures with Hitchin holonomy $\rho$ corresponds bijectively to the grafting data $\mathscr{GD}$. We denote by $\mathbf{Gr}_{\eta}$ the real projective structure with holonomy $\rho$ corresponding to a grafting data $\eta \in \mathscr{GD}$. 

\subsection{Construction of graftable multi-curves}
 Given a grafting data $\eta = (\{\alpha_i\}_{i \in I},\{w_i\}_{i \in I})$ for a finite index set $I$. In this section, we provide a construction that, given two homotopically transverse multicurves $\alpha = \{\alpha_i\}_{i \in I}$ and $\beta = \{\beta_j\}_{j \in J}$, makes a graftable multi-curve $\beta_{R},\beta_{L}$ in $\mathbf{Gr}_{\eta}$ isotopic to $\beta$.

 We will mainly explain the construction of $\beta_{R}$; the construction of $\beta_L$ is similar to that of $\beta_{R}$ and will be discussed at the end. If $\alpha_{i_1}$ and $\alpha_{i_2}$ are parallel, then we can multiply $w_{i_1}$ and $w_{i_2}$. In this way we may assume $\alpha$ contains no parallel loops. We begin by assuming that the multi-curve $\beta$ also contains no parallel loops. In this case, one can assume that each component of $\alpha$ and $\beta$ are simple closed geodesics. Recall that $\mathbf{Gr}_{\eta}$ is obtained by gluing ${\Sigma}_c|\alpha$ with some grafting annuli.

 The boundary of $\Sigma_c|\alpha$ consists of two copies ${\alpha}_i'$ and ${\alpha}_i''$ of each curve ${\alpha}_i$ and there are two copies $p' \in \alpha_i'$ and $p'' \in \alpha_i''$ of each point $p \in \alpha_i \cap \beta$. We fix a small positive number $\epsilon$, consider the points $p_{R}' \in {\alpha_i}'$ and $p_{R}'' \in {\alpha_i}''$ lying at distance $\epsilon$ (with respect to the Hilbert metric) from $p'$ and $p''$ to the right side of $p$ respectively.

 Now, $\beta \cap (\Sigma_c|\alpha)$ is a union of geodesic segments $[p,q]$ joining points of the boundary of $\Sigma_c|\alpha$. We define $\beta_{R}$ in $\Sigma_c|\alpha \subset \Sigma$ to be the union of the segments $[p_{R},q_{R}]$ with $p_{R}$ and $q_{R}$ constructed above. Observe that the segments $[p_{R},q_{R}]$ are disjoint each other if $\epsilon$ is small enough.

 Then, we define the multi-curve $\beta_{R}$ in the grafting annuli. First, we suppose that $\beta$ intersects $\alpha_i$ once. We may suppose $\beta$ and $\alpha_i$ intersect compatibly with the orientation of $\Sigma$ without loss of generality. The grafting annulus inserted along $\alpha_i$ corresponds to the completely even word $w_i$ and decomposes into eight types of special $\mathbb{RP}^2$-annuli $F^{\alpha_i}_{\pm xx},F^{\alpha_i}_{\pm xy},F^{\alpha_i}_{\pm yy}$ and $F^{\alpha_i}_{\pm yx}$: Let $A_i = \rho(\alpha_i) \in PGL(3,\mathbb{R})$. Each annuli $F^{\alpha_i}_{\pm *}$ decomoses into four elementary annuli and corresponds to the index word $*$. The induced developing map is onto a hexagonal domain $H^{A_i}_{\pm *}$, as described in Subsection 3.1. 

 In each annulus $F^{\alpha_i}_{\pm *}$, construct a simple arc as shown in the following Figure\ref{fig:2} (Figure \ref{fig:3}).
 
 \begin{figure}[p]
   \centering
   \begin{overpic}[width=16cm,clip]{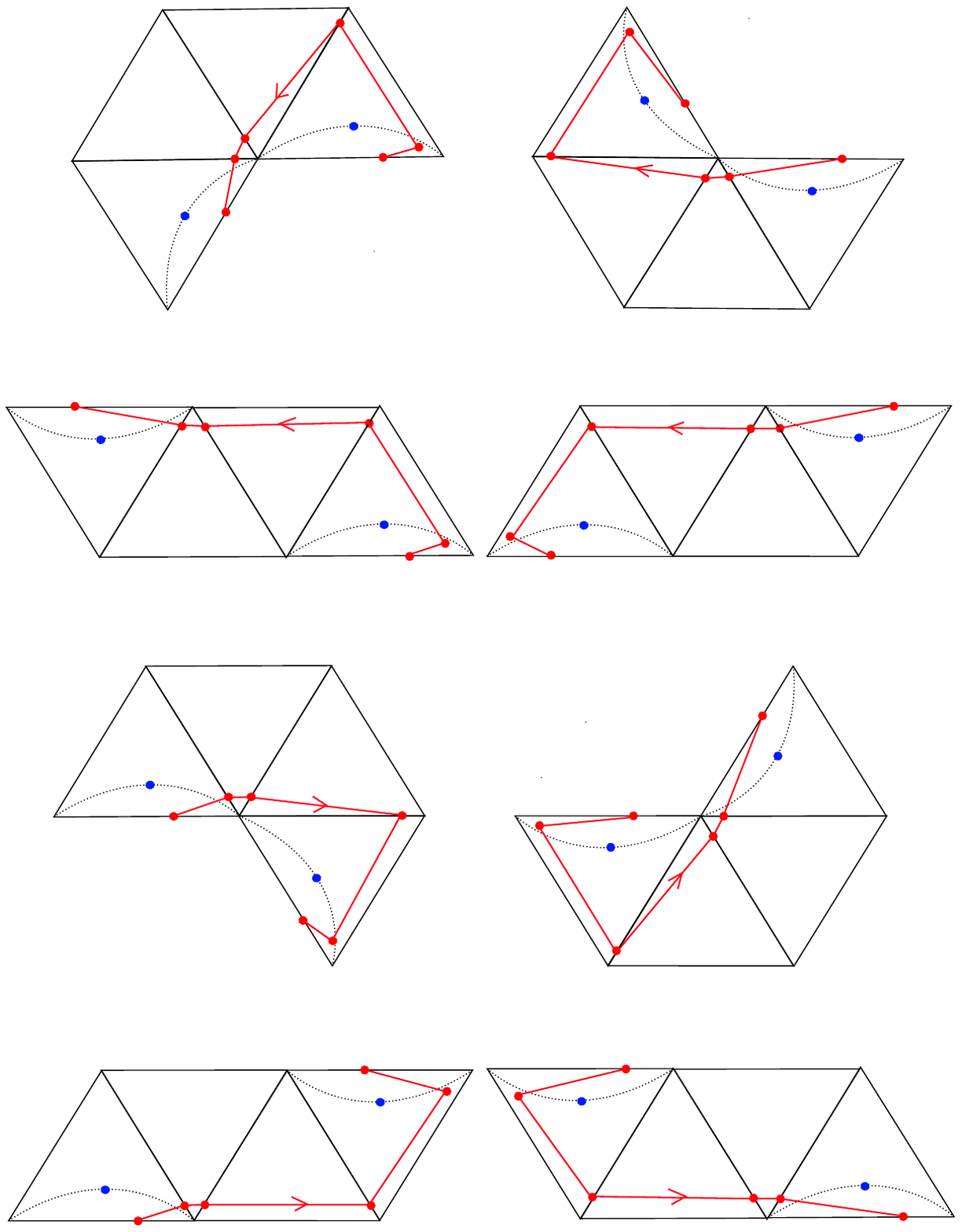}

   \put (100,460) {$H^{A_i}_{+xx}$}
    \put (160,580) {$\triangle^{A_i}_1$}
    \put (110,595) {$\triangle^{A_i}_2$}
    \put (70,570) {$\triangle^{A_i}_3$}
    \put (60,530) {$\triangle^{A_i}_4$}
    
   \put (330,460) {$H^{A_i}_{-xx}$}
    \put (375,510) {$\triangle^{A_i}_4$}
    \put (330,500) {$\triangle^{A_i}_3$}
    \put (290,520) {$\triangle^{A_i}_2$}
    \put (280,570) {$\triangle^{A_i}_1$}
    
   \put (100,340) {$H^{A_i}_{+xy}$}
    \put (175,390) {$\triangle^{A_i}_1$}
    \put (130,400) {$\triangle^{A_i}_2$}
    \put (85,380) {$\triangle^{A_i}_3$}
    \put (40,400) {$\triangle^{A_i}_1$}
    
   \put (330,340) {$H^{A_i}_{-xy}$}
    \put (400,400) {$\triangle^{A_i}_4$}
    \put (355,385) {$\triangle^{A_i}_3$}
    \put (310,400) {$\triangle^{A_i}_2$}
    \put (270,390) {$\triangle^{A_i}_4$}
    
   \put (100,150) {$H^{A_i}_{+yy}$}
    \put (65,265) {$\triangle^{A_i}_1$}
    \put (105,280) {$\triangle^{A_i}_3$}
    \put (150,260) {$\triangle^{A_i}_2$}
    \put (155,215) {$\triangle^{A_i}_4$}
    
   \put (330,150) {$H^{A_i}_{-yy}$}
    \put (285,205) {$\triangle^{A_i}_4$}
    \put (325,190) {$\triangle^{A_i}_2$}
    \put (370,210) {$\triangle^{A_i}_3$}
    \put (370,255) {$\triangle^{A_i}_1$}
    
   \put (100,20) {$H^{A_i}_{+yx}$}
    \put (40,75) {$\triangle^{A_i}_1$}
    \put (85,90) {$\triangle^{A_i}_3$}
    \put (130,70) {$\triangle^{A_i}_2$}
    \put (175,85) {$\triangle^{A_i}_1$}
    
   \put (330,20) {$H^{A_i}_{-yx}$}
    \put (270,85) {$\triangle^{A_i}_4$}
    \put (310,75) {$\triangle^{A_i}_2$}
    \put (355,90) {$\triangle^{A_i}_3$}
    \put (400,75) {$\triangle^{A_i}_4$}
    
   \end{overpic}
   \caption{$\beta_R$ in the grafting annuli (red lines). Blue points are fixed points of $\rho(\beta_j)$.}
   \label{fig:2}
 \end{figure}

 \begin{figure}[p]
   \centering
   \begin{overpic}[width=16cm,clip]{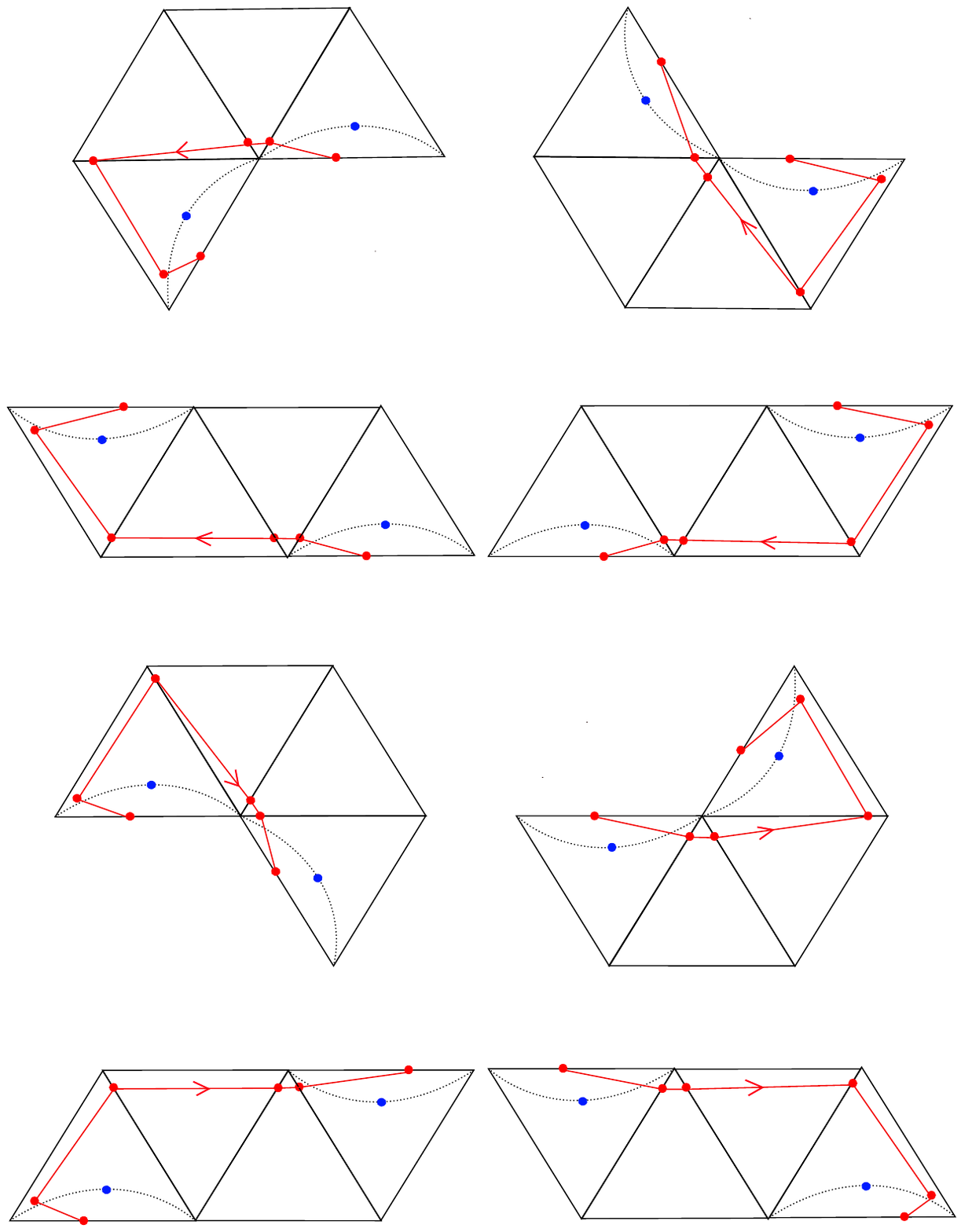}

   \put (100,450) {$H^{A_i}_{+xx}$}
    \put (160,570) {$\triangle^{A_i}_1$}
    \put (115,585) {$\triangle^{A_i}_2$}
    \put (75,565) {$\triangle^{A_i}_3$}
    \put (70,520) {$\triangle^{A_i}_4$}
    
   \put (330,450) {$H^{A_i}_{-xx}$}
    \put (375,510) {$\triangle^{A_i}_4$}
    \put (335,495) {$\triangle^{A_i}_3$}
    \put (290,515) {$\triangle^{A_i}_2$}
    \put (285,565) {$\triangle^{A_i}_1$}
    
   \put (100,330) {$H^{A_i}_{+xy}$}
    \put (175,385) {$\triangle^{A_i}_1$}
    \put (130,400) {$\triangle^{A_i}_2$}
    \put (85,380) {$\triangle^{A_i}_3$}
    \put (40,400) {$\triangle^{A_i}_1$}
    
   \put (330,330) {$H^{A_i}_{-xy}$}
    \put (400,400) {$\triangle^{A_i}_4$}
    \put (355,385) {$\triangle^{A_i}_3$}
    \put (315,395) {$\triangle^{A_i}_2$}
    \put (270,385) {$\triangle^{A_i}_4$}
    
   \put (100,140) {$H^{A_i}_{+yy}$}
    \put (65,260) {$\triangle^{A_i}_1$}
    \put (105,280) {$\triangle^{A_i}_3$}
    \put (150,255) {$\triangle^{A_i}_2$}
    \put (155,210) {$\triangle^{A_i}_4$}
    
   \put (330,140) {$H^{A_i}_{-yy}$}
    \put (285,205) {$\triangle^{A_i}_4$}
    \put (325,190) {$\triangle^{A_i}_2$}
    \put (370,210) {$\triangle^{A_i}_3$}
    \put (370,255) {$\triangle^{A_i}_1$}
    
   \put (100,10) {$H^{A_i}_{+yx}$}
    \put (45,70) {$\triangle^{A_i}_1$}
    \put (85,80) {$\triangle^{A_i}_3$}
    \put (130,70) {$\triangle^{A_i}_2$}
    \put (175,85) {$\triangle^{A_i}_1$}
    
   \put (330,10) {$H^{A_i}_{-yx}$}
    \put (270,85) {$\triangle^{A_i}_4$}
    \put (315,70) {$\triangle^{A_i}_2$}
    \put (355,80) {$\triangle^{A_i}_3$}
    \put (400,70) {$\triangle^{A_i}_4$}
    
   \end{overpic}
   \caption{$\beta_L$ in the grafting annuli (red lines). Blue points are fixed points of $\rho(\beta_j)$.}
   \label{fig:3}
 \end{figure}
 
 Then, one obtains $\beta_{R}$ in the grafting annulus which joins $p_{R}'$ and $p_{R}''$ by connecting these arcs.
 
 Now we check that $\beta_{R}$ constructed above is a graftable multi-curve. It suffices to verify that each component $(\beta_j)_R$ of $\beta_R$ is graftable since the components do not intersect one another. More precisely, we will show the following.
First, assign a wight to each intersection point of $\beta$ and $\alpha_i$: if the orientation of $\alpha_i$ and $\beta$ around the point agree with the orientation of the surface $\Sigma$, attach $w_i$; otherwise attach $w_i^{*}$ as in Figure \ref{fig:4}.

 \begin{figure}[H]
  \begin{overpic}[width=15cm,clip]{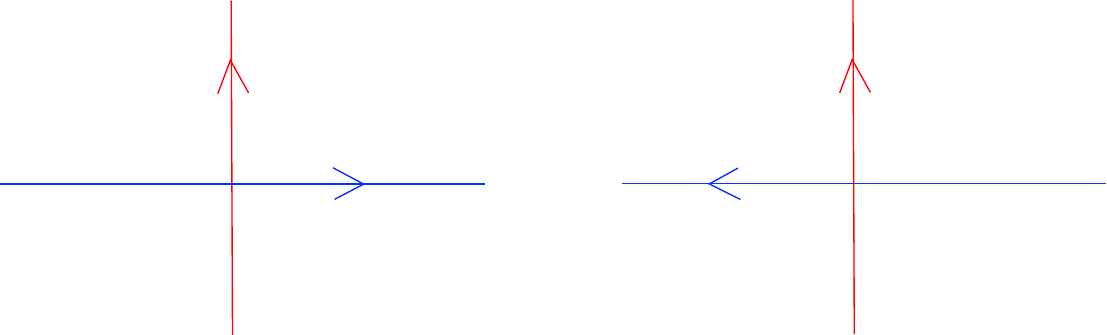}
  \put (95,110) {$\alpha_i$}
  \put (190,60) {$\beta_j$}
  \put (92,48) {$w_i$}

  \put (335,110) {$\alpha_i$}
  \put (227,60) {$\beta_j$}
  \put (332,48) {${w_i}^{*}$}
  \end{overpic}
  \caption{A weighting scheme along $\beta_R$.}
  \label{fig:4}
 \end{figure}

 Then along $\beta_j$, multiply the wights in the order in which the intersection points appear, and denote the resulting completely even word by $\widehat{\tau_j}$. Let $\tau_j$ be the primitive completely even word satisfying $\widehat{\tau_j}={\tau_j}^n$ for some $n \ge 1$. We would like to show
\begin{enumerate}
  \item $(\beta_j)_{R}$ is simple,
  \item there is a lift to $\widehat{T^{\rho(\beta_j)}_{\tau_j}}$ of the developing map restricted to $\widetilde{(\beta_j)_{R}}$.
  \item this lift is injective.
\end{enumerate}

 (1) The grafting annulus consists of elementary annuli. Therefore, we will check that each arc $\gamma$ in the elementary annulus is simple. The lift of $\gamma$ in each triangle $\triangle^{A_i}_{k}$ are formed by connecting line segments in $\mathbb{RP}^2$. Each of these line segments intersects the principal segment when it is extended in $\triangle^{A_i}_{k}$, see Figure \ref{fig:2}. This implies that the lifts of $\gamma$ to the universal cover are disjoint each other and hence $\gamma$ is a simple arc.

 (2) Let $B_j=\rho(\beta_j) \in PGL(3,\mathbb{R})$. The holonomy covering space $\widehat{T^{B_j}_{\tau_j}}$ is a union of the copies of hexagonal domains $H^{B_j}_{\pm *}$. The developing image of the simple arc in $H^{A_i}_{\pm *} $ is shown in Figure \ref{fig:5}. Figure \ref{fig:5} shows that it can be lifted to $H^{B_j}_{\pm *}$.
 
 (3) The triangle domains $\triangle^{A_{i_1}}_1$ and $\triangle^{A_{i_2}}_1$ $(i_1 \neq i_2)$ are disjoint. Likewise, the triangle domains $\triangle^{A_{i_1}}_4$ and $\triangle^{A_{i_2}}_4$ are disjoint as well. Therefore, we can make lifts of simple arcs in $\widehat{T^{B_j}_{\tau_j}}$ mutually disjoint by a homotopy, see Figure \ref{fig:5}. This means the lift of $\widetilde{(\beta_j)_{R}}$ to $\widehat{T^{B_j}_{\tau_j}}$ is injectively embedded.

 \begin{figure}[p]
   \centering
   \begin{overpic}[width=16cm,clip]{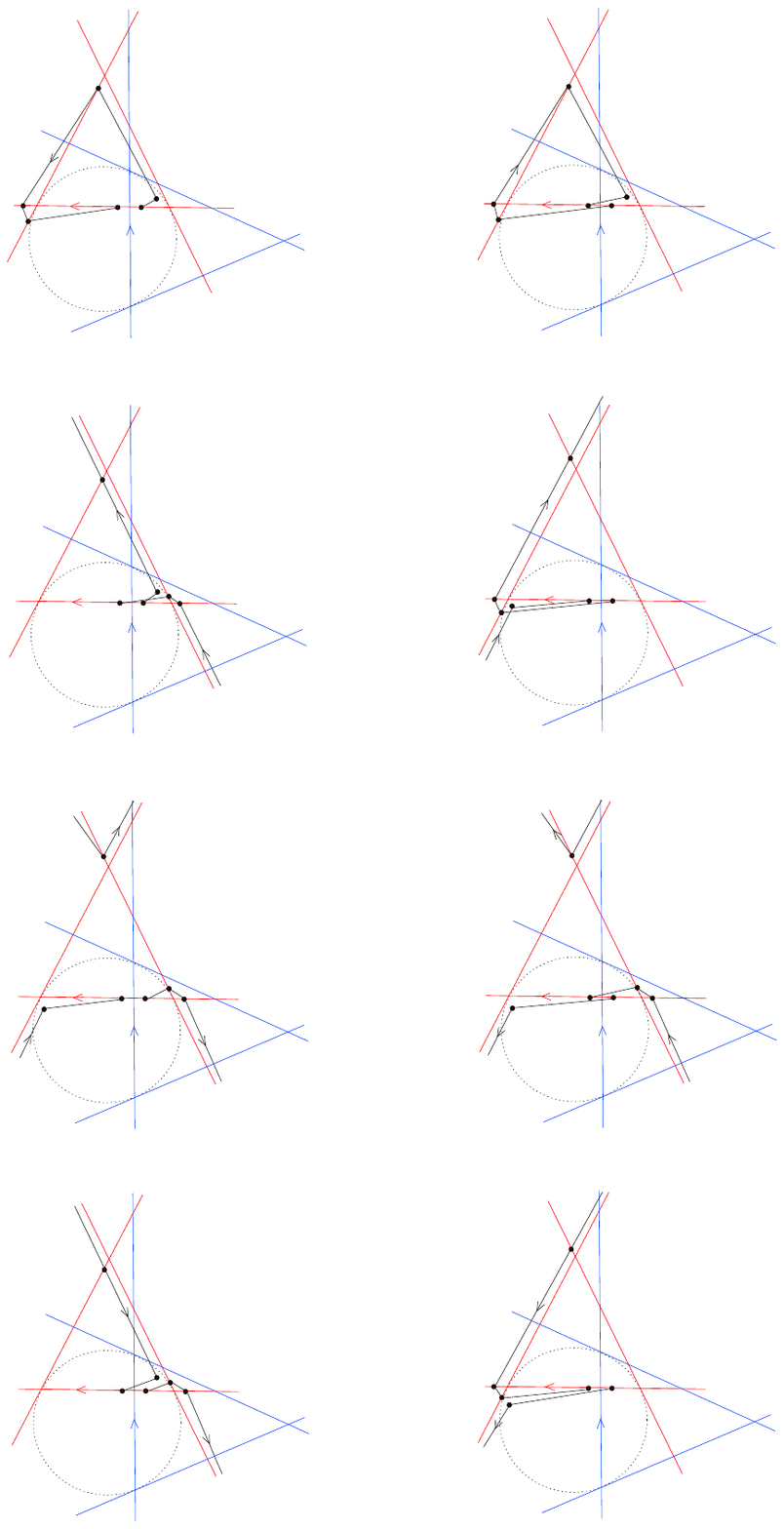}
   
   \put (120,490) {$H^{A_i}_{+xx}$}
    \put (100,562) {$\color{red}{\widetilde{\alpha_i}}$}
    \put (120,520) {$\color{blue}\widetilde{\beta_j}$}
    
   \put (310,490) {$H^{A_i}_{-xx}$}
    \put (290,562) {$\color{red}{\widetilde{\alpha_i}}$}
    \put (310,520) {$\color{blue}{\widetilde{\beta_j}}$}
    
   \put (120,325) {$H^{A_i}_{+xy}$}
    \put (100,402) {$\color{red}{\widetilde{\alpha_i}}$}
    \put (120,360) {$\color{blue}{\widetilde{\beta_j}}$}
    
   \put (310,325) {$H^{A_i}_{-xy}$}
    \put (290,402) {$\color{red}{\widetilde{\alpha_i}}$}
    \put (310,360) {$\color{blue}{\widetilde{\beta_j}}$}
    
   \put (120,167) {$H^{A_i}_{+yy}$}
    \put (100,240) {$\color{red}{\widetilde{\alpha_i}}$}
    \put (120,200) {$\color{blue}{\widetilde{\beta_j}}$}
    
   \put (310,167) {$H^{A_i}_{-yy}$}
    \put (290,240) {$\color{red}{\widetilde{\alpha_i}}$}
    \put (310,200) {$\color{blue}{\widetilde{\beta_j}}$}
    
   \put (120,10) {$H^{A_i}_{+yx}$}
    \put (100,80) {$\color{red}{\widetilde{\alpha_i}}$}
    \put (120,40) {$\color{blue}{\widetilde{\beta_j}}$}
    
   \put (310,10) {$H^{A_i}_{-yx}$}
    \put (290,80) {$\color{red}{\widetilde{\alpha_i}}$}
    \put (310,40) {$\color{blue}{\widetilde{\beta_j}}$}
    
   \end{overpic}
   \caption{The developing image of $\beta_R$ in the grafting annuli.}
   \label{fig:5}
 \end{figure}

 \begin{figure}[p]
   \centering
   \begin{overpic}[width=16cm,clip]{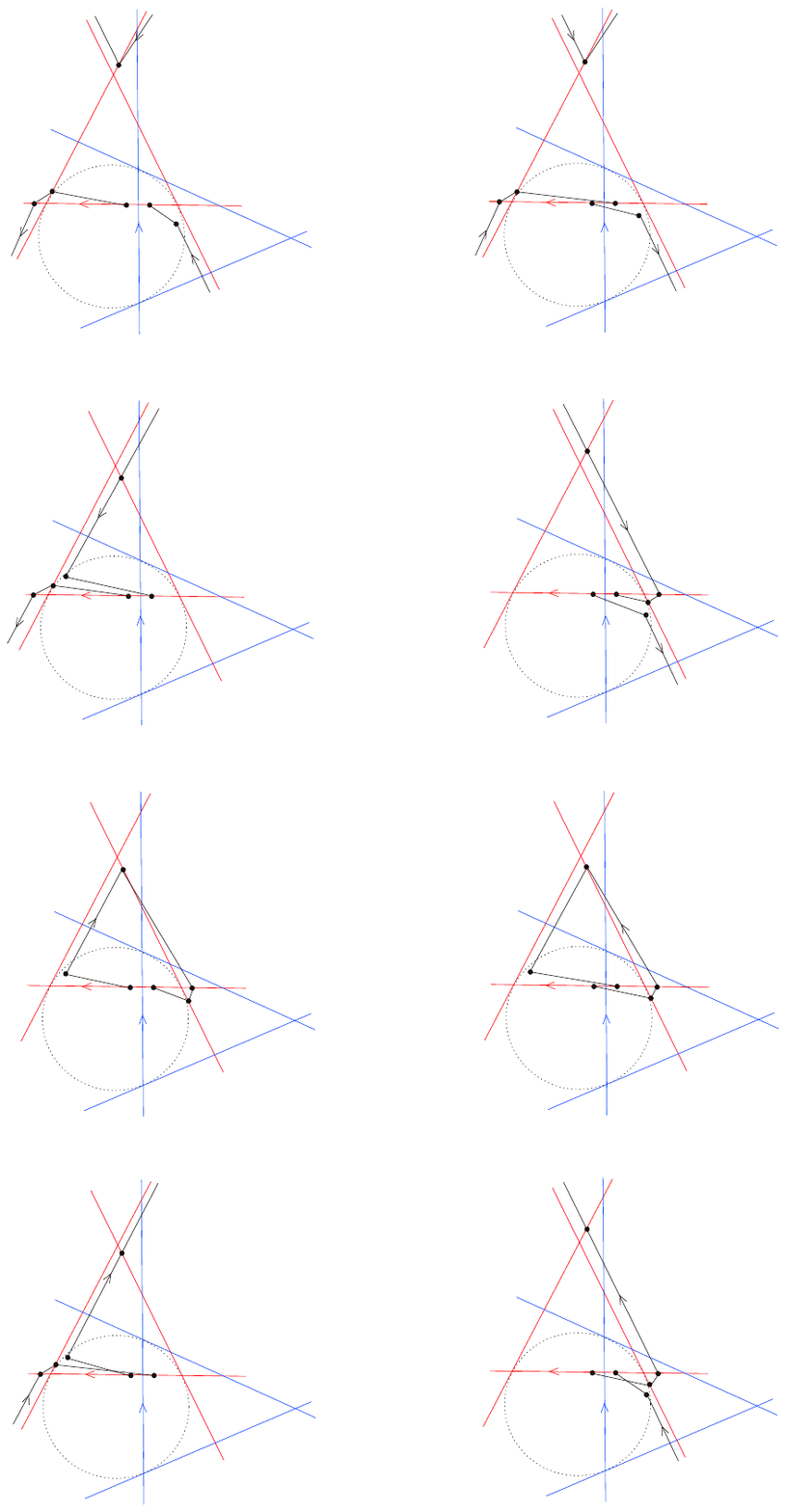}
   
   \put (120,490) {$H^{A_i}_{+xx}$}
    \put (100,547) {$\color{red}{\widetilde{\alpha_i}}$}
    \put (120,520) {$\color{blue}\widetilde{\beta_j}$}
    
   \put (310,490) {$H^{A_i}_{-xx}$}
    \put (290,547) {$\color{red}{\widetilde{\alpha_i}}$}
    \put (310,520) {$\color{blue}{\widetilde{\beta_j}}$}
    
   \put (120,325) {$H^{A_i}_{+xy}$}
    \put (100,387) {$\color{red}{\widetilde{\alpha_i}}$}
    \put (120,360) {$\color{blue}{\widetilde{\beta_j}}$}
    
   \put (310,325) {$H^{A_i}_{-xy}$}
    \put (290,387) {$\color{red}{\widetilde{\alpha_i}}$}
    \put (310,360) {$\color{blue}{\widetilde{\beta_j}}$}
    
   \put (120,167) {$H^{A_i}_{+yy}$}
    \put (100,225) {$\color{red}{\widetilde{\alpha_i}}$}
    \put (120,200) {$\color{blue}{\widetilde{\beta_j}}$}
    
   \put (310,167) {$H^{A_i}_{-yy}$}
    \put (290,225) {$\color{red}{\widetilde{\alpha_i}}$}
    \put (310,200) {$\color{blue}{\widetilde{\beta_j}}$}
    
   \put (120,10) {$H^{A_i}_{+yx}$}
    \put (100,65) {$\color{red}{\widetilde{\alpha_i}}$}
    \put (120,40) {$\color{blue}{\widetilde{\beta_j}}$}
    
   \put (310,10) {$H^{A_i}_{-yx}$}
    \put (290,65) {$\color{red}{\widetilde{\alpha_i}}$}
    \put (310,40) {$\color{blue}{\widetilde{\beta_j}}$}
    
   \end{overpic}
   \caption{The developing image of $\beta_L$ in the grafting annuli.}
   \label{fig:6}
 \end{figure}
 
 \newpage
 
Let us explain the construction when the multi-curve $\beta$ intersects $\alpha_i$ in more than one point. Let $\{p_l\}$ be the sets of intersection points of $\alpha_i$ and $\beta$. Then form the points $(p_l)'_R$ and $(p_l)''_R$ as before. Each simple arc connecting in grafting annulus is constructed in the same way as in the case where $\beta$ and $\alpha_i$ intersects at one point. Then, similarly to the above, assign the weight on each intersection point of $\beta$ and $\alpha$ and multiply the weights along each component $\beta_j$ of $\beta$. Let the obtained weight be denoted by ${\tau_j}^n$ for a primitive completely even word $\tau_j$. One can prove the following conditions;
 \begin{itemize}
  \item there is a lift  to $\widehat{T^{\rho(\beta_j)}_{\tau_j}}$ of the developing map restricted to $\widetilde{(\beta_j)_{R}}$, 
  \item this lift is injective.
\end{itemize}

 A key observation is that $\beta_R$ is a union of disjoint simple closed curves. In particular, we will check that the constructed arcs in $F^{\alpha_i}_{\pm *}$ are disjoint each other. In Figure \ref{fig:7}, we sketched $\beta_R$ in $F^{\alpha_i}_{+xx}$ in the case of two intersection points. 
 
 \begin{figure}[H]
  \centering
   \begin{overpic}[width=8cm,clip]{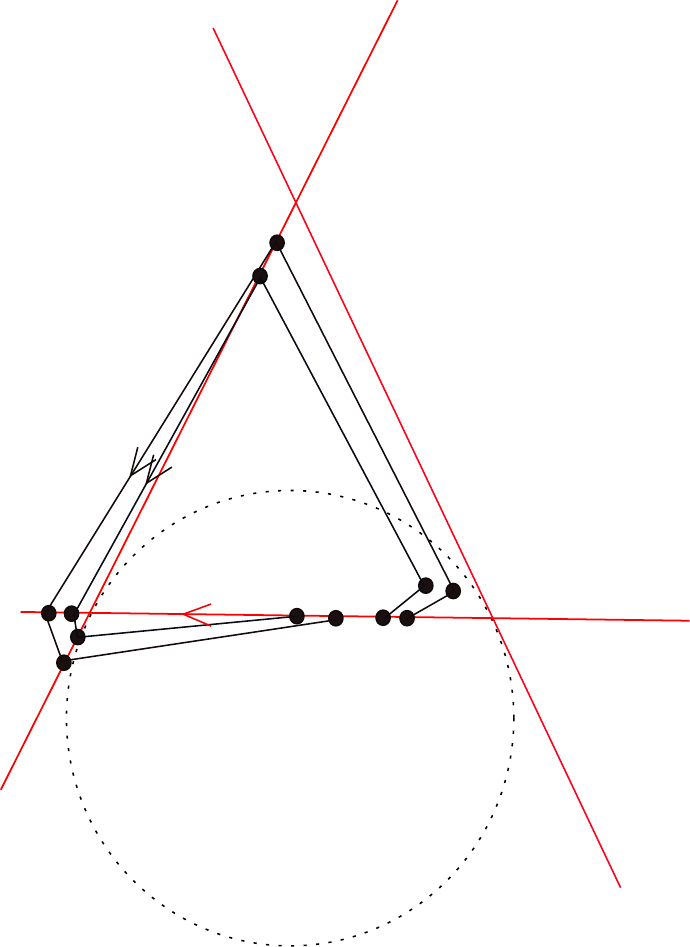}
   \put (40,115) {$\color{red}{\widetilde{\alpha_i}}$}
   
   \end{overpic} 
   \caption{The lifts of $\beta_R$ in $F^{\alpha_i}_{+xx}$ in the case where $\beta$ and $\alpha_i$ intersects in two points.}
   \label{fig:7}
 \end{figure}
 
 The lifts of simple arcs in $H^{A_i}_{\pm *}$ are disjoint, and hence such arcs are also disjoint when projecting to $F^{\alpha_i}_{\pm *}$. Thus, $\beta_R$ is a multi-curve.
 
 Finally, if $\beta$ has some components parallel to $\beta_j$, then we do the construction above for each copy of $\beta_j$. Then, we may move them by a small homotopy so that they no longer overlap, and arrange them side by side.

 The construction of $\beta_L$ is similar to that of $\beta_R$. We consider the points $p_L' \in \alpha_i'$ and $p_L'' \in \alpha_i''$ lying at distance $\epsilon$ from two copies $p' \in \alpha'$ and $p'' \in \alpha''$ to the left side of $p$ respectively. Then, we obtain $\beta_L$ in the grafting annulus which joins $p_L'$ and $p_L''$ by connecting the arcs in Figure \ref{fig:3}. The developing image of each simple arc in $H^{A_i}_{\pm *}$ is shown in Figure \ref{fig:6}. We may check that $\beta_L$ is a multi-curve such that there is an injective lift to $\widehat{T^{\rho(\beta_j)}_{\overline{\tau_j}}}$ of the developing map restricted to $\widetilde{(\beta_j)_L}$ for each $j \in J$.
 
 \begin{remark} \label{rem:3.1}
   Every component $(\beta_j)_R$, $(\beta_j)_L$ can be projectively embedded in the special $\mathbb{RP}^2$-torus $T^{\rho(\beta_j)}_{\tau_j}$, $T^{\rho(\beta_j)}_{\overline{\tau_j}}$ respectively. Therefore, although $\beta_R$ and $\beta_L$ are isotopic, they are not isotopic when regarded as graftable multi-curves.
 \end{remark}

 In the end, we proved the following.

 \begin{proposition} \label{prop:3.2}
   Let $\eta = (\{\alpha_i\}_{i \in I},\{w_i\}_{i \in I})$ be a grafting data. Given two homotopically transverse muti-curves $\alpha = \{\alpha_i\}_{i \in I}$ and $\beta = \{\beta_j\}_{j \in J}$, assign the weight on each intersection point of $\beta$ and $\alpha$ as explained above and multiply the weights along each components $\beta_j$ of $\beta$. Let the obtained weight be denoted by ${\tau_j}^n$ for a primitive complete even word $\tau_j$. Then, there is a graftable multi-curve $\beta_{R}$ and $\beta_{L}$ istopic to $\beta$ on $\mathbf{Gr}_{\eta}$ such that each component $(\beta_j)_R$, $(\beta_j)_L$ can be projectively embedded in the special $\mathbb{RP}^2$-torus $T^{\rho(\beta_j)}_{\tau_j}$, $T^{\rho(\beta_j)}_{\overline{\tau_j}}$ respectively.
 \end{proposition}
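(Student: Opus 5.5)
The plan is to assemble the construction of \S 3.3 into a proof, treating the general case as a combination of the local pieces already analysed. We first reduce to the situation where $\alpha$ has no parallel components, by merging parallel $\alpha_{i_1},\alpha_{i_2}$ and replacing their weights with the product $w_{i_1}w_{i_2}$; this does not change $\mathbf{Gr}_\eta$. We then take $\alpha$ and $\beta$ to be closed geodesics for the convex structure $\sigma_c$. They are then in minimal position, so $\alpha\cap\beta$ realises the geometric intersection number. We write $\mathbf{Gr}_\eta$ as $\Sigma_c|\alpha$ glued to grafting annuli, each decomposed into blocks $F^{\alpha_i}_{\pm *}$ according to the letters of $w_i$.

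Next we define $(\beta_j)_R$ piecewise. In $\Sigma_c|\alpha$ it consists of the segments $[p_R,q_R]$ obtained by pushing endpoints a Hilbert distance $\epsilon$ to the right. In each block $F^{\alpha_i}_{\pm *}$ it consists of the arc of Figure \ref{fig:2}. The curve is isotopic to $\beta$, since collapsing each grafting annulus back onto $\alpha_i$ and letting $\epsilon\to 0$ deforms it onto $\beta$. Embeddedness is checked on each piece separately:
\begin{itemize}
\item In $\Sigma_c|\alpha$, distinct segments are disjoint for small $\epsilon$, because distinct geodesic segments of $\beta$ there are disjoint and the push is uniform.
\item In a block, the lifts of the arcs to $H^{A_i}_{\pm*}$ are unions of segments each of which, extended inside $\triangle^{A_i}_k$, meets the principal segment. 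Hence lifts coming from different intersection points are nested, as in Figure \ref{fig:7}, and $A_i$-translates do not cross, so the projections are disjoint.
\end{itemize}
Parallel copies of a $\beta_j$ are pushed off slightly.

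For graftability of $(\beta_j)_R$ we follow a lift $\widetilde{(\beta_j)_R}$ in $\widetilde{\Sigma}$ through its sequence of pieces. Each crossing of $\alpha_i$ with the compatible orientation traverses the annulus in the order $w_i$; with the opposite orientation it traverses it reversed, with the roles of $\overline{p_op_+}$ and $\overline{p_-p_o}$ swapped, that is, in the order $w_i^*$. Figure \ref{fig:5} shows that the developed image of each block arc passes through the triangles of $B_j=\rho(\beta_j)$ in the pattern of $H^{B_j}_{\pm*}$ for the same letter pair. Concatenating, one fundamental domain of $\widetilde{(\beta_j)_R}$ develops along the word $\widehat{\tau_j}=\tau_j^n$. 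The developing map therefore lifts to the holonomy cover of $T^{B_j}_{\widehat{\tau_j}}$, which covers $\widehat{T^{B_j}_{\tau_j}}$ because the hexagon sequence is periodic with period $\tau_j$. This gives condition (i) of Definition \ref{def:2.4} and, through Proposition \ref{prop:2.5}, a projective embedding into $T^{B_j}_{\tau_j}$.

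The main obstacle is injectivity of this lift, condition (ii). Pieces lying in different hexagons $H^{B_j}$ are distinct sheets of $\widehat{T}$, so they are automatically disjoint. Within one sheet we would use that $\triangle^{A_{i_1}}_1\cap\triangle^{A_{i_2}}_1=\emptyset$ and $\triangle^{A_{i_1}}_4\cap\triangle^{A_{i_2}}_4=\emptyset$ for distinct lifts, which follows from the disjointness of the corresponding geodesics in $\Omega$. The segments in $\Sigma_c|\alpha$ develop injectively into $\Omega$, and a homotopy rel endpoints inside each triangle separates the remaining arcs, giving an isotopy to an injective lift. Finally $\beta_L$ is handled identically using Figures \ref{fig:3} and \ref{fig:6}. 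There the left push interchanges the roles of the two side triangles, which swaps $x$ and $y$ in every letter, so the lift lands in $\widehat{T^{B_j}_{\overline{\tau_j}}}$.
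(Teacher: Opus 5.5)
Your outline follows the paper's argument essentially step for step: the same reduction, the same piecewise $\beta_R$ from Figure~\ref{fig:2}, simplicity via nesting in $H^{A_i}_{\pm *}$, the lift via Figure~\ref{fig:5}, and injectivity via disjointness of triangles. Your detour through $\widehat{T^{B_j}_{\widehat{\tau_j}}}$ is harmless for the existence of the lift.

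\textbf{The disjointness claim is false.} The gap is in the step you yourself call the main obstacle. You claim that $\triangle^{A_{i_1}}_1\cap\triangle^{A_{i_2}}_1=\emptyset$ and $\triangle^{A_{i_1}}_4\cap\triangle^{A_{i_2}}_4=\emptyset$ for distinct lifts, and that this follows from the disjointness of the geodesics. Both halves cannot hold for the same pair. The paper's step (3) contains the same sentence, so citing it does not help.

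To see why, recall $\triangle^{A}_1\supset\Omega_+$ and $\triangle^{A}_4\supset\Omega_-$. The region of $\Omega$ between $\widetilde{\alpha}_{i_1}$ and $\widetilde{\alpha}_{i_2}$ is nonempty. It lies in $\triangle^{A_{i_1}}_{k_1}\cap\triangle^{A_{i_2}}_{k_2}$, where $k_s\in\{1,4\}$ labels the side of $\widetilde{\alpha}_{i_s}$ facing the other lift.
\begin{itemize}
\item If $k_1=k_2=1$, the $\triangle_1$'s meet.
\item If $k_1=k_2=4$, the $\triangle_4$'s meet.
\item If $k_1\neq k_2$, the sides are nested, and \emph{both} pairs meet.
\end{itemize}
The last case is exactly your normalization, with every crossing made compatible via $w_i^{*}$.

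What is true is that the two outward-facing triangles are disjoint. With $\widetilde{\beta}$ passing from $\Omega_+$ to $\Omega_-$ at each crossing, this reads $\triangle^{A_{i_1}}_1\cap\triangle^{A_{i_2}}_4=\emptyset$ for $i_1<i_2$. This uses the order along $\widetilde{\beta}$, not just disjointness, and it needs proof. One route: since $\overline{p_o p_{\pm}}$ are the tangent lines to the $C^1$ strictly convex curve $\partial\Omega$ at $p_{\pm}$, a point outside $\overline{\Omega}$ lies in the triangle containing $\Omega_+$ if and only if both tangent lines from it touch $\partial\Omega$ in the open arc bounding $\Omega_+$. The two outer arcs are disjoint.

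\textbf{Even with the corrected lemma, condition (ii) still needs an ordering check.} The real content of (ii) sits in your phrase ``a homotopy rel endpoints inside each triangle separates the remaining arcs.'' Each triangle sheet of $\widehat{T^{B_j}_{\tau_j}}$ develops diffeomorphically onto some $\triangle^{B_j}_k$. Arcs in it with fixed endpoints on its edges can be made disjoint exactly when those endpoints are not interleaved, so that ordering is what you must check.

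This is also where working in $\widehat{T^{B_j}_{\widehat{\tau_j}}}$ falls short. There, the pieces coming from the $n$ copies of $\tau_j$ in $\widehat{\tau_j}=\tau_j^n$ lie in different sheets. The $n$-fold covering $\widehat{T^{B_j}_{\tau_j^n}}\to\widehat{T^{B_j}_{\tau_j}}$ superposes them in the same sheets. So injectivity upstairs only embeds $(\beta_j)_R$ in $T^{B_j}_{\tau_j^n}$, not in $T^{B_j}_{\tau_j}$ as the statement requires.

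For the same reason, you should invoke Proposition~\ref{prop:2.5} only after (ii) is established. Even then it gives $T^{B_j}_{\tau_j}$ itself only when the automorphism $f$ is the deck generator.
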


 \section{Grafting along the curves}

 We analyze the grafting along the graftable curves constructed in the previous section.

\subsection{The operation on grafting data}
  Given a grafting data $\eta = (\{\alpha_i\}_{i \in I},\{w_i\}_{i \in I})$ and a multi-curve $\beta = \{\beta_j\}_{j \in J}$ homotoipcally transverse to $\alpha = \{\alpha_i\}_{i \in I}$, we will obtain new grafting data $\eta \#_{R} \beta$ and $\eta \#_{L} \beta$.
  
 At each point of intersection $p \in \alpha_i \cap \beta_j$ choose points $p_N, p_S \in \alpha_i$ and $p_W, p_E \in \beta_j$ sufficiently close to $p$, so that these four points appear in the counterclockwise order $p_N$, $p_W$, $p_S$ and $p_E$. Furthermore, arrange the construction so that the direction (orientation) of $\beta_j$ be the same as the direction from $p_W$ to $p_E$. We construct a multi-curve by connecting these points with some arcs. This multi-curve consists of three types of arcs: (i) subarcs of $\alpha$, (ii) subarcs of $\beta$, and (iii) other arcs.

 (i) Let $\arc{p_N p_S}$ be a subarc of $\alpha_i$ connecting $p_N$ and $p_S$ through the point $p$. We obtain the desired subarcs by considering the complement $\alpha_i \setminus \bigcup \arc{p_N p_S}$ which is a union of subarcs connecting adjacent points on $\alpha_i$.

 (ii)  The components of $\alpha$ that intersect $\beta_j$ are ordered along the orientation of $\beta_j$ as $\alpha_{i_0},\alpha_{i_1}, \cdots ,\alpha_{i_{n-1}}$. We denote their respective intersection points by $p_{i_0},p_{i_1}, \cdots ,p_{i_{n-1}}$. As in the previous section, we assign a weight to each intersection point $p_{i_k}$ of $\beta_j$ and $\alpha$, and then multiply these weights along $\beta_j$. The resulting weight is a power of $\tau_j$, where $\tau_j$ is a completely even word. Suppose $\tau_j$ decomposes into a product of $m$ irreducible completely even words. We will connect $(p_{i_k})_E$ to $(p_{i_{k+m}})_W$ by a subarc of $\beta_j$ containing neither $p_{i_k}$ nor $p_{i_{k+m}}$ for each $k$.

 (iii) For each intersection point $p$ of $\alpha$ and $\beta$, we proceed as follows. To construct $\eta \#_{R} \beta$, we connect $p_N$ to $p_W$ and $p_S$ to $p_E$ as shown on the right in Figure \ref{fig:8}.
 To construct $\eta \#_{L} \beta$, we instead connect $p_N$ to $p_E$ and $p_S$ to $p_W$ as shown on the left in Figure \ref{fig:8}.

 \begin{figure}[H]
  \centering
   \begin{overpic}[width=15cm,clip]{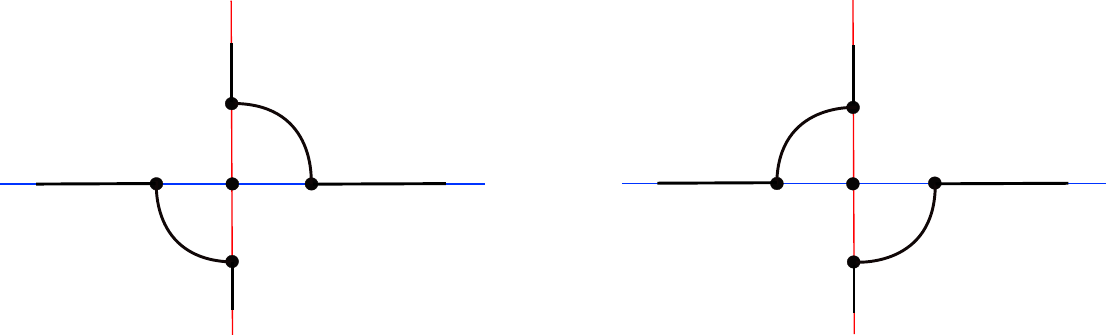}

   \put (95,120) {$\color{red}{\widetilde{\alpha_i}}$}
   \put (175,65) {$\color{blue}{\widetilde{\beta_j}}$}
   \put (95,65) {$p$}
   \put (75,95) {$p_N$}
   \put (95,30) {$p_S$}
   \put (125,50) {$p_E$}
   \put (60,65) {$p_W$}

   \put (335,120) {$\color{red}{\widetilde{\alpha_i}}$}
   \put (415,65) {$\color{blue}{\widetilde{\beta_j}}$}
   \put (336,65) {$p$}
   \put (335,95) {$p_N$}
   \put (313,30) {$p_S$}
   \put (365,65) {$p_E$}
   \put (295,45) {$p_W$}
   
   \end{overpic}
   \caption{the subsegments of $\eta \#_{R} \beta$ (right), of $\eta \#_{L} \beta$ (left).}
   \label{fig:8}
 \end{figure}

 The constructed curves consist non-oriented simple closed curves. We assign an orientation and a weight on each such simple closed curve $\gamma$ as follows. Take one component $\alpha_i$ of $\alpha$ that intersects $\gamma$, assign it an orientation compatible with $\alpha_i$, and give it a weight $w_i$. One may check that this assignment is well defined. By slightly perturbing the constructed curves via a homotopy, we obtained a new grafting data.

 \begin{remark} \label{rem:4.1}
   If we disregard the weights on grafting data, the operation $\#$ may be viewed as just a topological operation on multi-curves introduced in \cite{Luo:multi},\cite{Ito:exo}. Indeed, consider a multi-curve consisting $m$ parallel simple closed curve $\beta_j$ for each $j \in J$. The topological operation on this multi-curve and $\alpha$ yields the same multi-curve of $\eta \# \beta$.
 \end{remark}

 \subsection{Grafting along $\beta_R$ and $\beta_L$}

 Let $\eta = (\{\alpha_i\}_{i \in I},\{w_i\}_{i \in I})$ be a grafting data and $M$ be a $\mathbb{RP}^2$-surface corresponding to $\eta$ with Hitchin holonomy $\rho$. Recall that, given any multi-curve $\beta$ homotopically transverse with $\alpha = \{\alpha_i\}_{i \in I}$, we constructed graftable multi-curves $\beta_{R}$ and $\beta_{L}$ isotopic to $\beta$ in the previous section. Furthermore, each component $(\beta_j)_R$, $(\beta_j)_{L}$ is projectively embedded in the special $\mathbb{RP}^2$-torus $T^{\rho(\beta_j)}_{\tau_j}$, $T^{\rho(\beta_j)}_{\overline{\tau_j}}$ respectively. In this section, we shall use this special $\mathbb{RP}^2$-torus whenever grafting along $\beta_R$ and $\beta_L$. Accordingly, we will write $\mathbf{Gr}(M,((\beta_j)_R,T^{\rho(\beta_j)}_{\tau_j})_{j \in J})$ simply as $\mathbf{Gr}(M,\beta_R)$. Similarly, we abbreviate $\mathbf{Gr}(M,((\beta_j)_L,T^{\rho(\beta_j)}_{\overline{\tau_j}})_{j \in J})$ as $\mathbf{Gr}(M,\beta_L)$.

 We describe $\mathbf{Gr}(M,\beta_R)$ and $\mathbf{Gr}(M,\beta_L)$ as a grafting of a convex real projective surface. Recall that we denote the projective structure corresponding to a grafting data $\eta$ by $\mathbf{Gr}_{\eta}$.

 \begin{proposition} \label{prop:4.2}
   Given grafting data $\eta = (\{\alpha_i\}_{i \in I},\{w_i\}_{i \in I})$ and a multi-curve $\beta$ homotopically transverse to $\alpha = \{\alpha_i\}_{i \in I}$, let $\beta_R$ and $\beta_L$ denote graftable multi-curves on $\mathbf{Gr}_{\eta}$ constructed in the previous section. Then,
   
   \[\mathbf{Gr}(\mathbf{Gr}_{\eta},\beta_R) = \mathbf{Gr}_{\eta \#_{R} \beta},\] 
   \[\mathbf{Gr}(\mathbf{Gr}_{\eta},\beta_L) = \mathbf{Gr}_{\eta \#_{L} \beta}.\]
   
 \end{proposition}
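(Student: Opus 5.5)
The plan is to use Goldman's bijection between real projective structures with Hitchin holonomy $\rho$ and grafting data $\mathscr{GD}$. The structure $\mathbf{Gr}(\mathbf{Gr}_{\eta},\beta_R)$ still has holonomy $\rho$, since grafting does not change holonomy. It therefore suffices to recover its grafting data from the decomposition $\Sigma=\Sigma^{+}\cup\Sigma^{-}$ given by the developing section. Recall that each component of $\Sigma^{-}$ is an annulus whose core curve, together with the word read off from its elementary-annulus decomposition, is exactly one datum $(\gamma,w)$. So the proof reduces to identifying the non-convex region $\Sigma^{-}$ of the grafted surface, together with its annulus words, and matching it with the multi-curve and weights produced by the operation $\#_R$ of \S 4.1. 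The case $\#_L$ is symmetric: use the arcs of Figure \ref{fig:3} and the tori $T^{\rho(\beta_j)}_{\overline{\tau_j}}$, with the roles of $p_W$ and $p_E$ interchanged.

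\textbf{Step 1.} I will describe $\mathbf{Gr}_{\eta}$ concretely as $\Sigma_c|\alpha$ with the grafting annuli along each $\alpha_i$ glued in. Inside each annulus, the region developing outside $\Omega$ is the middle part $\triangle_2\cup\triangle_3$ (with their separating lines) of each hexagonal block $F^{\alpha_i}_{\pm *}$. This is a band parallel to $\alpha_i$.

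\textbf{Step 2.} Grafting along $(\beta_j)_R$ inserts the cylinder $(T^{\rho(\beta_j)}_{\tau_j})|(\beta_j)_R$. Using the lift of Figure \ref{fig:5}, I will locate how $\widetilde{(\beta_j)_R}$ sits in $\widehat{T^{B_j}_{\tau_j}}$: it crosses $\Omega$-regions in $\Sigma_c|\alpha$ and dips into the non-convex bands of the $\alpha_i$-annuli. The inserted strip $\widehat{T}\setminus\widetilde{dev}(\widetilde{(\beta_j)_R})$ then consists of its own $\Omega$-triangles ($\triangle^{B_j}_1,\triangle^{B_j}_4$) and non-convex triangles ($\triangle^{B_j}_2,\triangle^{B_j}_3$). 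Near each point $p\in\alpha_i\cap\beta_j$, the non-convex band of $\alpha_i$ is cut by $(\beta_j)_R$ and joined to the non-convex part of the inserted strip. The joining happens on the $N$–$W$ and $S$–$E$ sides, by the choice of $p_R$ to the right of $p$. This reproduces locally the smoothing (iii) of Figure \ref{fig:8}.

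\textbf{Step 3.} I will follow the non-convex band of the new strip along $\beta_j$. Because the torus word is the primitive root $\tau_j$ rather than $\widehat{\tau_j}=\tau_j^n$, one period of the strip's non-convex pieces corresponds to the blocks of $\tau_j$. If $\tau_j$ splits into $m$ irreducible factors, the components of $\Sigma^{-}$ coming out of $(p_{i_k})_E$ reappear at $(p_{i_{k+m}})_W$. This is exactly rule (ii) of \S 4.1. Combining Steps 2 and 3, the components of the new $\Sigma^{-}$ are annuli whose cores are isotopic to the curves of $\eta\#_R\beta$. Reading off the elementary-annulus sequence across each such band gives the irreducible word $w_i$ of any $\alpha_i$ it meets, with the orientation convention stated in \S 4.1. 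Goldman's bijection then gives $\mathbf{Gr}(\mathbf{Gr}_{\eta},\beta_R)=\mathbf{Gr}_{\eta\#_R\beta}$.

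\textbf{Main obstacle.} I expect the hardest part to be Step 3: proving that the strip's non-convex pieces connect with the shift by $m$ and not by $1$ or by $n\cdot(\text{length})$. This requires matching, letter by letter, the decomposition of $\tau_j$ into irreducible completely even words with the returns of the developed curve to $\Omega$ along $\widetilde{(\beta_j)_R}$. An irreducible factor ends precisely when the running exponent parities are both even, which is when the lift returns to a $\triangle_1/\triangle_4$ sheet, i.e. to $\Omega$. I would first verify this in the single-intersection case using Figure \ref{fig:5}, then handle several intersection points by concatenating the local pictures of Figure \ref{fig:7}. Well-definedness of the weights, namely that the $w_i$ read along a new curve agree for all $\alpha_i$ it meets, also needs to be checked in this step.
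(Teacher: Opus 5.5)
Your proposal follows essentially the same route as the paper. The paper also locates the new preimage $D'^{-1}(\partial\Omega)$ after the bubble of $\widetilde{\beta}_R$ is inserted. It asserts that inside this bubble the preimage consists of the segments $\widetilde{q}^L_i\widetilde{q}^R_{i+m}$ and $\widetilde{r}^L_i\widetilde{r}^R_{i+m}$, which is your shift by $m$. It then joins these segments to the curves $\widetilde{\alpha}_i'$, $\widetilde{\alpha}_i''$ and reads off the weight of each new annulus. Your parity argument, that completely even prefixes of $\tau_j$ are exactly where the strip returns to the sheet of $\widetilde{\mathbb{RP}^2}$ containing $\Omega$, correctly justifies the step the paper only states.

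There are two things to fix.

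First, Steps 1--2 describe the non-convex part as $\triangle_2\cup\triangle_3$ of each hexagonal block, with $\triangle_1,\triangle_4$ treated as ``$\Omega$-triangles''. This is wrong. The band is cut out by the lifts of $\partial\Omega$, and these run inside the first and last triangles of the bubble, so the parts of $\triangle_1,\triangle_4$ outside $\Omega$ belong to the band. Moreover, a block $F_{\pm xy}$ or $F_{\pm yx}$ ends on the sheet antipodal to the one it starts on. So for an irreducible word such as $xyxy$, which is built from two such blocks, the middle triangles $\triangle_1,\triangle_4$ lie entirely outside $\Omega$. The annulus then carries a single band, not one band per block. This count of one band per irreducible factor is exactly what your Step 3 and your Goldman-bijection framing require, so Step 1 should be stated that way. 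The same correction applies to your description of the inserted $B_j$-strip.

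Second, the weight of a new band $E_i$ has to be read against the invariant lines of the new holonomy $\rho(\partial E_i)$, not those of $A_i$ or $B_j$. The paper does this along the transversal $\widetilde{\beta}^L_R\cap\widetilde{E_i}$, using the fact that the axis of $\rho(\partial E_i)$ crosses both $\widetilde{\alpha_i}$ and $\widetilde{\beta}$.
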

 
 \begin{proof}
   
   We describe the grafting annuli of the projective structure $\sigma' = \mathbf{Gr}(\mathbf{Gr}_{\eta},\beta_{R})$. The description of the grafting annuli of $\mathbf{Gr}(\mathbf{Gr}_{\eta},\beta_{L})$ is similar.
 
   First of all, we may assume $\beta$ is a simple closed curve by applying the induction on $j \in J$. We denote by $\Omega$ the developing image of the universal cover of $\Sigma_c$.
   
   First, we choose one lift $\widetilde{\beta}$ of $\beta$, then we label the lifts of the components of $\alpha$ that meet $\widetilde{\beta}$ in order of intersection with $\widetilde{\beta}$ as $\{\widetilde{\alpha_{i}} \mid i \in \mathbb{Z}\}$. If $\Sigma$ be the real projective surface corresponding to the projective structure $\sigma = \mathbf{Gr}_{\eta}$, $\widetilde{\Sigma}$ is constructed by gluing the universal cover of the grafting annuli $\widehat{T^{\rho(\alpha_i)}_{w_i}} \setminus \widetilde{\alpha_i}$ to $\widetilde{\Sigma_c} \setminus \bigcup \widetilde{\alpha_i}$ and the developing map $D \colon \widetilde{\Sigma} \to \widetilde{\mathbb{RP}^2}$ is constructed as well. These grafting regions are called $\textit{bubbles}$, see Figure \ref{fig:9}.

   \begin{figure}[h]
   \centering
   \begin{overpic}[width=10cm,clip]{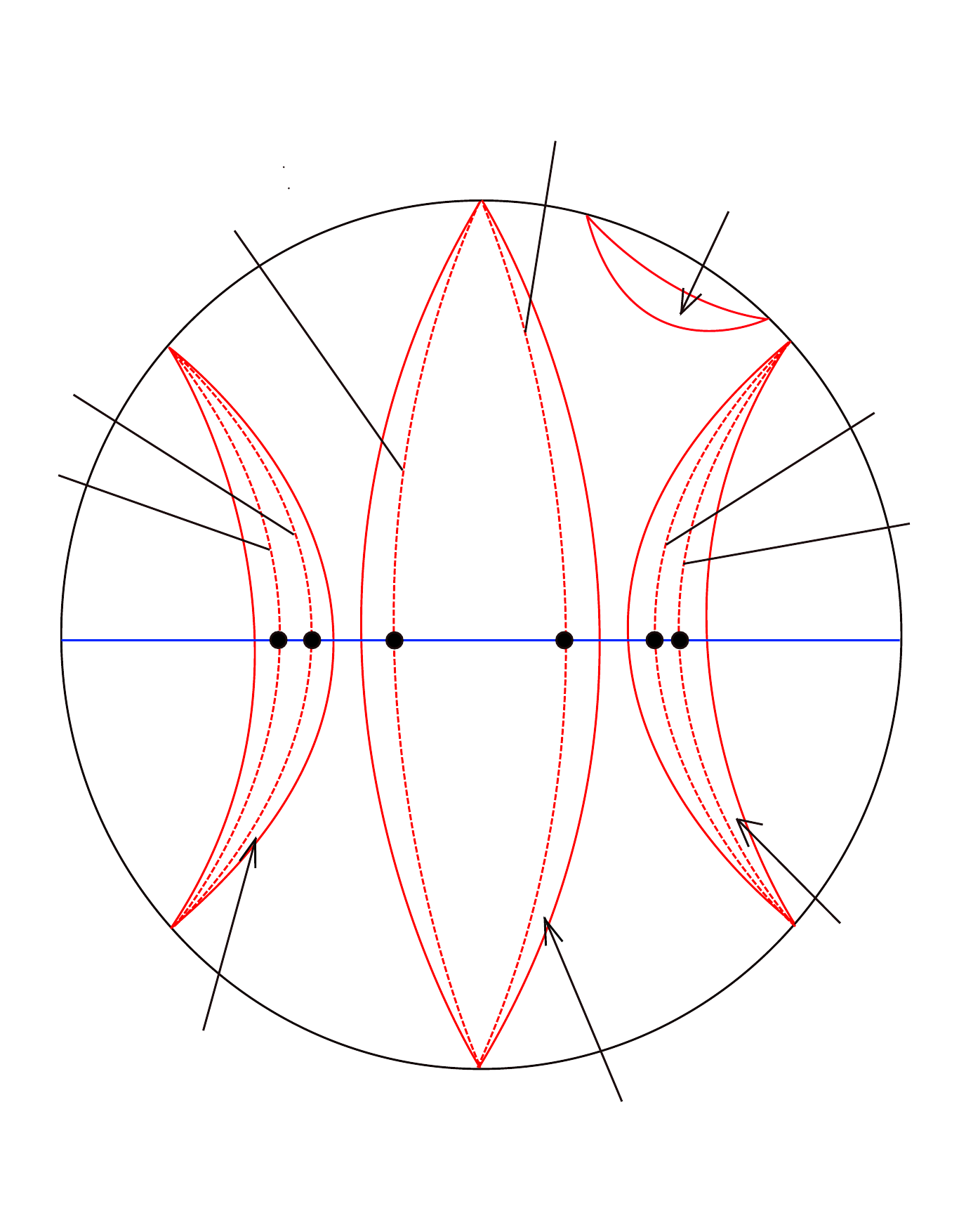}
   
   \put (20,50) {Bubble of ${\widetilde{\alpha}_{-1}}$}
    \put (-2,220) {$\widetilde{\alpha}_{-1}'$}
    \put (3,250) {$\widetilde{\alpha}_{-1}''$}
    \put (65,180) {$\widetilde{q}_{-1}$}
    \put (90,180) {$\widetilde{r}_{-1}$}
   
   \put (160,25) {Bubble of ${\widetilde{\alpha}}_0$}
    \put (60,300) {$\widetilde{\alpha}_0'$}
    \put (160,325) {$\widetilde{\alpha}_0''$}
    \put (110,180) {$\widetilde{q}_0$}
    \put (160,180) {$\widetilde{r}_0$}
    
   \put (240,80) {Bubble of ${\widetilde{\alpha}}_1$}
    \put (260,245) {$\widetilde{\alpha}_1'$}
    \put (270,210) {$\widetilde{\alpha}_1''$}
    \put (185,180) {$\widetilde{q}_1$}
    \put (200,180) {$\widetilde{r}_1$}
    
  \put (210,305) {Another bubble}
  \put (270,175) {${\widetilde{\beta}}_R$ or ${\widetilde{\beta}}_L$}
  
   \end{overpic}
   \caption{The universal cover of $\Sigma$}
   \label{fig:9}
   \end{figure}

   In each bubble, there are two components of $D^{-1}(\partial \Omega)$, say $\widetilde{\alpha_i}'$ and $\widetilde{\alpha_i}''$ along the direction of $\beta$. The curve $\widetilde{\beta}_R$ intersects these curves $\widetilde{\alpha}_i'$ and $\widetilde{\alpha}_i''$ successively. For each $i$, we denote the intersection points of $\widetilde{\alpha}_i'$ and $\widetilde{\beta}_R$ by $\widetilde{q}_i$. Similarly, we denote the intersection points of $\widetilde{\alpha}_i''$ and $\widetilde{\beta}_R$ by $\widetilde{r}_i$.

   Let us analyze the projective structure $\sigma' = \mathbf{Gr}(\mathbf{Gr}_{\eta}, \beta_R)$ in more closely. The universal cover $\widetilde{\Sigma'}$ of the real projective surface corresponding to $\sigma'$ is obtained by cutting $\widetilde{\Sigma}$ along $\widetilde{\beta}_R$ and gluing a copy of $\widehat{T^{\rho(\beta)}_{\tau}} \setminus \widetilde{D}(\widetilde{\beta}_R)$. Let $D' \colon \widetilde{\Sigma}' \to \widetilde{\mathbb{RP}^2}$ be the developing map of $\sigma'$. We have two copies ${\widetilde{\beta}}^R_R$ and ${\widetilde{\beta}}^L_R$ of $\widetilde{\beta}_R$: ${\widetilde{\beta}}^R_R$ is the boundary component of the bubble on its right. Let ${\widetilde{q}}^R_i$ and ${\widetilde{q}}^L_i$ be the points corresponding to $\widetilde{q_i}$ lying in ${\widetilde{\beta}}^R_R$ and ${\widetilde{\beta}}^L_R$ respectively. Similarly, let ${\widetilde{r}}^R_i$ and ${\widetilde{r}}^L_i$ be the points corresponding to $\widetilde{r_i}$ lying in ${\widetilde{\beta}}^R_R$ and ${\widetilde{\beta}}^L_R$ respectively.

  If $\tau$ decomposes into $m$ irreducible completely even words, ${D'}^{-1}(\partial \Omega)$ in the bubble of ${\widetilde{\beta}}_R$ consists of the segments ${\widetilde{q}}^L_i$${\widetilde{q}}^R_{i+m}$ and ${\widetilde{r}}^L_i$${\widetilde{r}}^R_{i+m}$. We illustrated the segments in the bubble of $\beta_R$ which develops to $\partial \Omega$ in Figure \ref{fig:10}.

  \begin{figure}[h]
   \centering
   \begin{overpic}[width=10cm,clip]{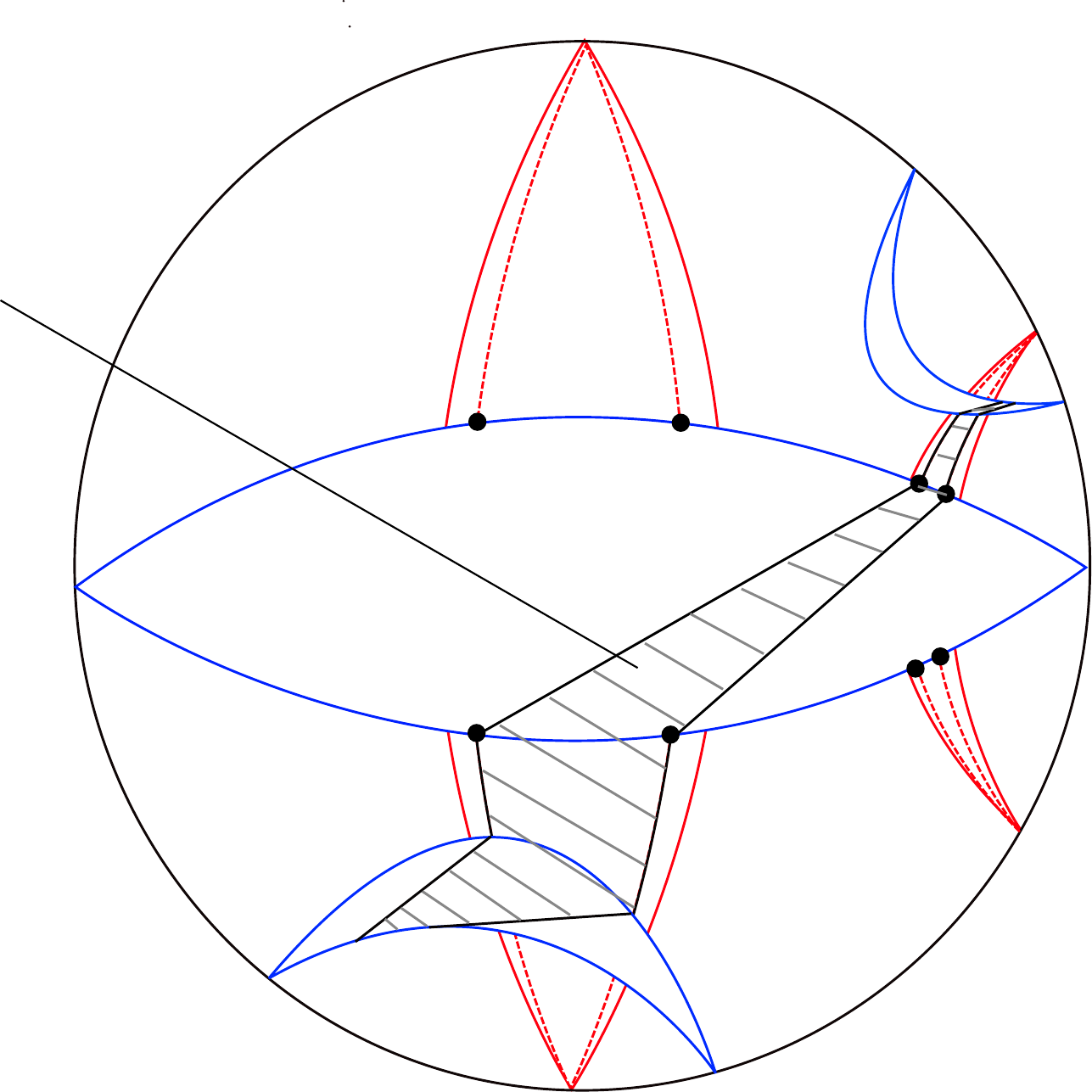}
   \put (115,180) {$\widetilde{q}^R_i$}
   \put (115,75) {$\widetilde{q}^L_i$}
   \put (177,180) {$\widetilde{r}^R_i$}
   \put (177,75) {$\widetilde{r}^L_i$}

   \put (215,170) {$\widetilde{q}^R_{i+m}$}
   \put (220,90) {$\widetilde{q}^L_{i+m}$}
   \put (255,155)  {$\widetilde{r}^R_{i+m}$}
   \put (255,105) {$\widetilde{r}^L_{i+m}$}

   \put (-15,210) {$\widetilde{E_i}$}
   \put (30,152) {${\widetilde{\beta}}^R_R$}
   \put (30,105) {${\widetilde{\beta}}^L_R$}
   
   \end{overpic}
   \caption{The universal cover of $\widetilde{\Sigma}'$ corresponding to the projective structure $\eta \#_R \beta$: the hatched region $\widetilde{E_i}$ develops to $\widetilde{\mathbb{RP}^2} \setminus \Omega$.}
   \label{fig:10}
  \end{figure}
  
  Therefore, $D'^{-1}(\partial \Omega)$ is constructed by joining these segments and $\widetilde{\alpha_i}'$, $\widetilde{\alpha_i}''$. It is easy to see that $D'^{-1}(\widetilde{\mathbb{RP}^2} \setminus \Omega)$ is the region bounded by the curves $D'^{-1}(\partial \Omega)$.
  
  Finally, we will compute weights on grafting annuli. Let each component of $D'^{-1}(\widetilde{\mathbb{RP}^2} \setminus \Omega)$ that intersects with the bubble of $\beta_R$ and contains the points ${\widetilde{q}}^L_i$ and ${\widetilde{r}}^L_i$ be denoted by $\widetilde{E_i}$. We denote by $E_i$ the grafting annulus on the surface $\Sigma'$ covered by $\widetilde{E_i}$. We give $\partial E_i$ an orientation compatible with $\alpha_i$. Now we will compute the weight of the grafting annulus $E_i$. The curve ${\beta}^L_R \cap E_i$ intersects the simple closed curves which develop to the invariant lines of $\rho(\partial E_i)$ transversally. The axis (principal line) of $\rho(\partial E_i)$ intersects both $\widetilde{\alpha_i}$ and $\widetilde{\beta}$, as shown in Figure \ref{fig:11} below.
  
 \begin{figure}[H]
   \centering
   \begin{overpic}[width=10cm,clip]{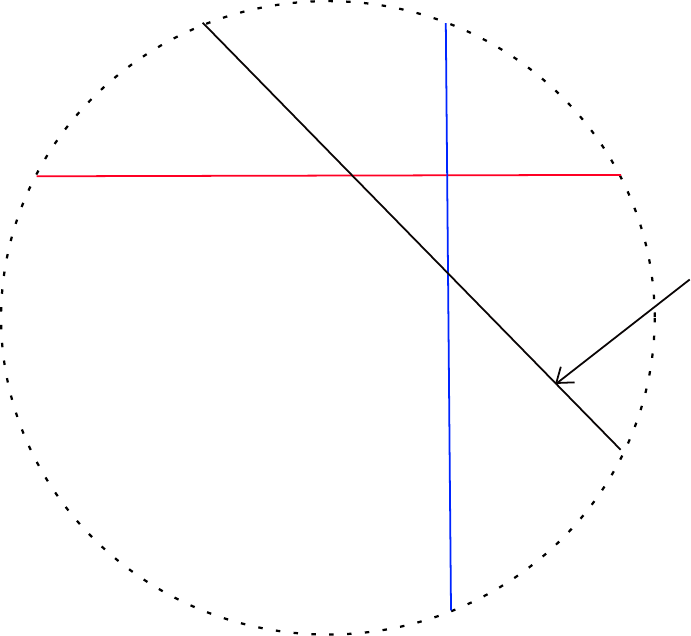}
   \put (40,195) {$\color{red}\widetilde{\alpha_i}$}
   \put (175,40) {$\color{blue}\widetilde{\beta}$}
   \put (280,150) {Axis of $\rho(\partial E_i)$}
   \put (70,80) {$\Omega$}
   \end{overpic}
   \caption{The axis of $\rho(\partial E_i)$}
   \label{fig:11}
 \end{figure}
  
  By observing the developing image of ${\widetilde{\beta}}^L_R \cap \widetilde{E_i}$ and recording the intersections with the invariant lines of $\rho(\partial E_i)$ in order, we can verify that the weight of $E_i$ is $w_i$.
 \end{proof}

 \begin{remark} \label{rem:4.3}
   We may obtain another projective structure $\sigma''$ by grafting $n_j$-fold covering of $T^{\rho(\beta_j)}_{\tau_j}$, $T^{\rho(\beta_j)}_{\overline{\tau_j}}$ along $(\beta_j)_R$, $(\beta_j)_L$ respectively for each $j \in J$. In this case, instead of $\beta$, we consider a multi-curve consisting of $n_j$ parallel copies of $\beta_j$ for each $j \in J$, then apply Proposition \ref{prop:4.2} to compute $\sigma''$.
 \end{remark}

 \subsection{The action of mapping class group on grafting data}

  Recall that a grafting data $\eta = (\{\alpha_i\}_{i \in I},\{w_i\}_{i \in I})$ is a pair of multi-curve $\alpha = \{\alpha_i\}_{i \in I}$ and weights $\{w_i\}_{i \in I}$ on $\alpha$.

  Let the mapping class group of an oriented surface $\Sigma$ be denoted by $\mathbf{Mod}(\Sigma)$. The mapping class group $\mathbf{Mod}(\Sigma)$ acts on the set of grafting data $\mathscr{GD}$ as follows. Given a grafting data $\eta = (\{\alpha_i\}_{i \in I},\{w_i\}_{i \in I})$ and $\phi \in \mathbf{Mod}(\Sigma)$, 
  \[\phi \cdot \eta \coloneq (\phi_{*}\{\alpha_i\}_{i \in I},\{w_i\}_{i \in I}).\]

  This action is related with the operation $\#$ on $\mathscr{GD}$:
  \[\phi \cdot (\eta \#_{R} \beta) = (\phi \cdot \eta) \#_{R} (\phi_{*}\beta),\]
  \[\phi \cdot (\eta \#_{L} \beta) = (\phi \cdot \eta) \#_{L} (\phi_{*}\beta).\]

  Hence, combining this fact with Proposition \ref{prop:4.2}, the following holds.

  \begin{corollary} \label{cor:4.4}
    Let $\eta = (\{\alpha_i\}_{i \in I},\{w_i\}_{i \in I})$ be a grafting data and $\beta$ be a multi-curve homotopically transverse with $\alpha = \{\alpha_i\}_{i \in I}$. Given $\phi \in \mathbf{Mod}(\Sigma)$,
  \[\mathbf{Gr}(\mathbf{Gr}_{\phi \cdot \eta},(\phi_{*}\beta)_{R}) = \mathbf{Gr}_{\phi \cdot (\eta \#_R \beta)},\]
  \[\mathbf{Gr}(\mathbf{Gr}_{\phi \cdot \eta},(\phi_{*}\beta)_{L}) = \mathbf{Gr}_{\phi \cdot (\eta \#_L \beta)}.\]
  \end{corollary}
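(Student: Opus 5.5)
The plan is to combine Proposition \ref{prop:4.2} with two equivariance properties: the equivariance of the operation $\#$ displayed just above, and the naturality of the whole grafting construction under diffeomorphisms. Concretely, for the first identity, apply Proposition \ref{prop:4.2} to the grafting data $\phi\cdot\eta = (\phi_*\{\alpha_i\},\{w_i\})$ and the multi-curve $\phi_*\beta$. Before doing so, check that $\phi_*\beta$ is homotopically transverse to $\phi_*\alpha$. This holds because a diffeomorphism representing $\phi$ preserves geometric intersection numbers and the property of having no parallel components. Proposition \ref{prop:4.2} then gives
\[\mathbf{Gr}(\mathbf{Gr}_{\phi\cdot\eta},(\phi_*\beta)_R)=\mathbf{Gr}_{(\phi\cdot\eta)\#_R(\phi_*\beta)}.\]
Rewriting the right-hand side with $\phi\cdot(\eta\#_R\beta)=(\phi\cdot\eta)\#_R(\phi_*\beta)$ yields the claim. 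The $L$ version is identical.

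The step that needs genuine justification is the equivariance $\phi\cdot(\eta\#_R\beta)=(\phi\cdot\eta)\#_R(\phi_*\beta)$, which the paper only asserts. I would verify it directly from the construction in \S 4.1, choosing an orientation-preserving diffeomorphism $f$ representing $\phi$.

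\textbf{Topological data.} Under $f$, the following are carried to the corresponding data for $f(\alpha)$ and $f(\beta)$:
\begin{itemize}
\item the intersection points $p$;
\item the auxiliary points $p_N,p_W,p_S,p_E$ (their counterclockwise order is preserved because $f$ preserves orientation);
\item the arcs of types (i), (ii) and (iii).
\end{itemize}
For the type (iii) arcs, the right/left resolution is preserved, again by orientation preservation.

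\textbf{Weights.} The rule of Figure \ref{fig:4}, attaching $w_i$ or $w_i^*$ according to local orientation agreement, is invariant under $f$. Hence $\widehat{\tau_j}$, $\tau_j$ and the shift $m$ used in the type (ii) arcs are unchanged. The final orientation and weight assignment on each resulting curve is also transported by $f$.

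\textbf{Isotopy and transversality.} Since grafting data depend only on isotopy classes, the choice of representative $f$ and the small perturbations do not matter. The one subtlety I expect is that $f(\alpha)$ and $f(\beta)$ are no longer geodesic for the convex structure. So I must use that the operation $\#$, defined combinatorially, depends only on the isotopy class of the transverse pair in minimal position. I would cite that any two minimal-position realizations are related by an isotopy of the pair, as in Luo's setting (Remark \ref{rem:4.1}).

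Note that $\mathbf{Gr}_{\phi\cdot\eta}$ is a structure with the same holonomy $\rho$, not a pullback, so no holonomy change needs tracking. The corollary is purely a reindexing of Proposition \ref{prop:4.2}.
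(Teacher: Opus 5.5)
Your proposal is correct and follows the paper's argument. Like the paper, you apply Proposition \ref{prop:4.2} to the data $\phi\cdot\eta$ and the multi-curve $\phi_*\beta$, then rewrite the result using $\phi\cdot(\eta\#_R\beta)=(\phi\cdot\eta)\#_R(\phi_*\beta)$ (and likewise for $L$). Your check of this equivariance, via the orientation-preserving naturality of the construction in \S 4.1, supplies a justification the paper only asserts.
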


  \section{The proof of the main theorem}
  
We prove Theorem A in this section.

  \subsection{Weight type of grafting data}

  Recall that grafting data $\eta = (\{\alpha_i\}_{i \in I},\{w_i\}_{i \in I})$ is a pair of the collection of disjoint simple closed curves $\{\alpha_i\}_{i \in I}$ and the collection of irreducible completely even words $\{w_i\}_{i \in I}$. In this section, if $\alpha_i$ and $\alpha_j$ are isotopic, one may treat them as a simple closed curve and assign it the product wights, namely $w_iw_j$ (or possibly $w_jw_i$ depending on the arrangement of $\alpha_i$ and $\alpha_j$).

  \begin{definition} \label{def:5.1}
    
    Let $\eta = (\{\alpha_i\}_{i \in I},\{w_i\}_{i \in I})$ be grafting data. The \textit{weight type} $WT(\eta)$ is a collection of the type of weights on the multi-curve $\{\alpha_i\}_{i \in I}$:
    \[WT(\eta) = \{w_i,{w_i}^{*} \mid i \in I \}\]
    
  \end{definition}

  \subsection{The proof of the main theorem}

  \begin{lemma} \label{lem:5.2}
    Let $w$ be a primitive completely even word satisfying $w = w^{*}$. Given grafting data $\eta_{\alpha}$ and $\eta_{\gamma}$ associated with multi-curves $\alpha$ and $\gamma$, where each component of $\alpha$ and $\gamma$ is assigned the weight $w$. Suppose $\alpha$ and $\gamma$ be two homotopically transversal multi-curves in $\Sigma$ such that each component of $\alpha$ intersects $\gamma$ vice versa. Then there exists a multi-curve $\beta$ intersecting $\alpha$ homotopically transversally and such that $\eta_{\gamma} = \eta_{\alpha} \#_R \beta$. Similarly, there exists a multi-curve $\beta'$ intersecting $\alpha$ homotopically transversally and such that $\eta_{\gamma} = \eta_{\alpha} \#_L \beta'$.
  \end{lemma}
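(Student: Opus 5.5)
The plan is to mimic the $\mathbb{CP}^1$ argument of \cite{CDF:gra}, using Luo's surgery description of $\#$ from Remark \ref{rem:4.1}. First I would take $\alpha$ and $\gamma$ to be geodesic multi-curves in minimal position, and look for a multi-curve $\beta$ such that resolving all crossings of $\alpha\cup\beta$ in the ``right'' manner (Figure \ref{fig:8}, right) gives a multi-curve isotopic to $\gamma$. The natural candidate is the one used for Fuchsian grafting: let $\beta$ be the multi-curve obtained from $\gamma$ and $\alpha$ by the inverse ($L$-type) resolution, i.e.\ $\beta=\alpha\cdot\gamma$ in Luo's product, resolving every intersection point of $\alpha\cap\gamma$ in the opposite direction. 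Topologically, resolving $\alpha\cup\beta$ then reproduces $\gamma$. I expect this because the $R$- and $L$-smoothings are mutually inverse Dehn-twist-like operations on measured laminations: in train-track or Dehn--Thurston coordinates, $\#_R$ adds the $\alpha$-coordinates to those of $\beta$ with the positive twist. Since every component of $\alpha$ meets $\gamma$ and vice versa, $\beta$ is homotopically transverse to $\alpha$, and no component of $\beta$ is parallel to $\alpha$.

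Next I need to check the multiplicity rule (ii) in the definition of $\#_R$. Every weight equals $w$ with $w=w^*$, so each intersection point of $\beta_j$ with $\alpha$ carries weight $w$, whatever the local orientation. The word along $\beta_j$ is therefore $w^{k}$ with $k=|\beta_j\cap\alpha|$, and $\tau_j=w$ because $w$ is primitive. Here $m$ is the number of irreducible factors of $w$. The connection rule, joining $(p_{i_k})_E$ to $(p_{i_{k+m}})_W$, then shifts arcs by $m$. I would handle this by taking $\beta$ to consist of parallel copies, in the spirit of Remark \ref{rem:4.1}: $\#_R$ with $\beta$ acts like Luo's resolution with $m$ parallel copies of each $\beta_j$. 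So I choose $\beta$ such that $m\beta$ resolved with $\alpha$ gives $\gamma$ with the correct multiplicity. Equivalently, I construct $\beta$ at the level of the $m$-fold structure, and I might need the weight on $\gamma$ to be recorded as $w$ on each of the resulting curves.

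Then I verify the weights. By the rule at the end of \S4.1, each component of the result receives the weight $w_i=w$ of any $\alpha_i$ it meets, compatibly with orientation. Since $w=w^*$, the orientation choice is irrelevant and the result is $\eta_\gamma$. The $\#_L$ statement follows in the same way, using the opposite resolution to produce $\beta'$, or by applying an orientation-reversing symmetry.

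The main obstacle is the second step: reconciling the shift by $m$ in the $\#$ operation with an honest topological inversion, so that a $\beta$ (rather than a fractional multiple) really exists and is a multi-curve. I would first try to handle this by working in Dehn--Thurston coordinates relative to a pants decomposition containing $\alpha$, where the shift appears as a twist by $m$ times the intersection pattern, and then solve for the coordinates of $\beta$ explicitly.
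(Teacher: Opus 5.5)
Your candidate $\beta=\alpha\cdot_L\gamma$ (the opposite smoothing) is the right one for the purely topological statement that the paper imports from \cite[Lemma 4.1]{CDF:gra}. Your observations that $\tau_j=w$ and that orientations are irrelevant because $w=w^*$ are also fine. However, the argument does not close at the step you flag as the main obstacle, and the repair you propose cannot work.

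Write $\cdot_R$ for Luo's smoothing. If $\eta_\alpha$ is read as one copy of each $\alpha_i$ carrying the reducible word $w$, then by Remark \ref{rem:4.1} you need $\alpha\cdot_R(m\beta)=\gamma$. Smoothing against $\alpha$ does not change intersection numbers with the components of $\alpha$: in Dehn--Thurston coordinates for a pants decomposition containing $\alpha$, it only moves twist coordinates. Hence $i(\alpha_i,\alpha\cdot_R(m\beta))=m\,i(\alpha_i,\beta)$ is divisible by $m$, while $i(\alpha_i,\gamma)$ may equal $1$. For instance, $w=xxyy$ is primitive, satisfies $w=w^*$, and has $m=2$. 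So no choice of coordinates for $\beta$ solves your equation in general.

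The missing idea is that the factor $m$ occurs on both sides. Grafting data carry irreducible weights, and a reducible weight $w=w_1\cdots w_m$ on a curve stands for $m$ parallel copies with weights $w_1,\dots,w_m$. This is the convention of \S3.3 and \S5.1, and it is what the proof of Proposition \ref{prop:4.2} uses, since there each bubble contains exactly two components of $D^{-1}(\partial\Omega)$.

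Hence $\eta_\alpha\#_R\beta$ is the smoothing of $m\alpha$ against $m\beta$, which by homogeneity of smoothing is $m(\alpha\cdot_R\beta)$. The parallel copies inherit $w_1,\dots,w_m$ consistently: from $w=w^*$ and unique factorization into irreducible completely even words one gets $w_k^*=w_{m+1-k}$. So the result is the curve $\alpha\cdot_R\beta$ with weight $w$. The lemma therefore reduces to $\alpha\cdot_R\beta=\gamma$ with no $m$ in sight, which is the content of the paper's remark that $\#$ ``descends to an operation on multi-curves''.

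For that topological step, replace your heuristic about ``mutually inverse'' smoothings by the twist computation. Extend $\alpha$ to a pants decomposition. Smoothing against one copy of $\alpha_i$ leaves all intersection coordinates fixed and shifts the twist at $\alpha_i$ by one unit, in opposite directions for $R$ and $L$. Then:
\begin{itemize}
\item because each $\alpha_i$ meets $\gamma$, $\beta=\alpha\cdot_L\gamma$ is a genuine multi-curve with $\alpha\cdot_R\beta=\gamma$;
\item because each component of $\gamma$ meets $\alpha$, so does each component of $\beta$, which gives homotopic transversality;
\item $\beta'=\alpha\cdot_R\gamma$ gives the $\#_L$ statement in the same way.
\end{itemize}
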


  \begin{proof}
    In this case, we do not have to care for the orientation and the weights of grafting data. The operation $\#$ on grfating data descends to just an operation on multi-curves, as mentioned in \cite[\S 3.4]{CDF:gra} (see Remark \ref{rem:4.1}). This lemma is due to \cite[Lemma 4.1]{CDF:gra}.
  \end{proof}

  \begin{lemma} \label{lem:5.3}
    Let $w$ be a completely even word satisfying $w=w^{*}$. Given two grfating data $\eta_1$ and $\eta_2$ whose weights are all $w$, then $\mathbf{Gr}_{\eta_2}$ can be obtained from $\mathbf{Gr}_{\eta_1}$ by a composition of two multi-graftings.
  \end{lemma}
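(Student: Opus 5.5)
The plan is to pass through an intermediate grafting datum $\eta_3$, carrying the weight $w$ on every component, whose multi-curve $\gamma$ is homotopically transverse to both $\alpha_1$ and $\alpha_2$ (the multi-curves of $\eta_1$ and $\eta_2$) and meets every component of each. Two applications of Lemma \ref{lem:5.2} then give $\eta_3=\eta_1\#_R\beta$ and $\eta_2=\eta_3\#_R\beta'$ for suitable multi-curves $\beta,\beta'$. Proposition \ref{prop:4.2} converts each of these into a single multi-grafting along a graftable multi-curve:
\[\mathbf{Gr}_{\eta_3}=\mathbf{Gr}(\mathbf{Gr}_{\eta_1},\beta_R),\qquad \mathbf{Gr}_{\eta_2}=\mathbf{Gr}(\mathbf{Gr}_{\eta_3},\beta'_R).\]
This is exactly a composition of two multi-graftings.

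First I reduce to the setting of Lemma \ref{lem:5.2}. Since $w=w^{*}$, orientations of the curves are irrelevant. Following the convention of \S 5.1, parallel components are merged, so the weight on a merged curve is a power of $w$. If $w$ itself is not primitive, write $w=u^k$ with $u$ primitive completely even. Then $u^{*}=u$, since $(u^{*})^k=w^{*}=w=u^k$ and the primitive root of a word is unique. I therefore expect to work with $u$ and treat curves weighted by powers of $u$ as parallel copies, as in Remark \ref{rem:4.3}. Should this bookkeeping be awkward, I would simply state the argument for primitive $w$, which is the case Lemma \ref{lem:5.2} requires.

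Next I choose $\gamma$ as a filling multi-curve, for instance a pants decomposition or a filling pair, in minimal position with respect to $\alpha_1\cup\alpha_2$. I then modify it, for example by applying a high power of a pseudo-Anosov mapping class $\phi$, so that every component of $\gamma$ meets both $\alpha_1$ and $\alpha_2$ and every component of $\alpha_1$ and of $\alpha_2$ meets $\gamma$. Taking $\gamma=\phi^N(\delta)$ for a fixed simple closed curve $\delta$ works: for large $N$, $\phi^N(\delta)$ intersects every essential simple closed curve, because the stable lamination of $\phi$ fills the surface. Homotopic transversality is arranged by taking geodesic representatives with respect to the convex structure (\S 3.1), where curves are automatically in minimal position. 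Assigning weight $w$ to each component of $\gamma$ gives $\eta_3$.

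Finally I apply Lemma \ref{lem:5.2} twice: to the pair $(\alpha_1,\gamma)$, giving $\beta$ with $\eta_3=\eta_1\#_R\beta$, and to the pair $(\gamma,\alpha_2)$, giving $\beta'$ with $\eta_2=\eta_3\#_R\beta'$. Proposition \ref{prop:4.2} and Proposition \ref{prop:3.2} guarantee that $\beta_R$ on $\mathbf{Gr}_{\eta_1}$ and $\beta'_R$ on $\mathbf{Gr}_{\eta_3}$ are graftable multi-curves realizing these two graftings. The main obstacle is verifying the mutual-intersection hypothesis of Lemma \ref{lem:5.2} simultaneously for both pairs, and checking that the parallel-curve and power-of-$w$ conventions are consistent with it. The pseudo-Anosov choice of $\gamma$ should handle the intersection hypothesis. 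The case where $\eta_1$ or $\eta_2$ is empty, that is, the convex structure, needs a separate remark, since grafting $\sigma_c$ along any multi-curve directly produces $\eta_2$ in a single step.
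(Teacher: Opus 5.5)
Your proposal is correct and takes essentially the same route as the paper: reduce to primitive $w$ (using $w=\tau^n \Rightarrow \tau=\tau^{*}$), choose a single simple closed geodesic $\gamma$ on $\Sigma_c$ meeting every component of $\alpha_1$ and of $\alpha_2$, and apply Lemma \ref{lem:5.2} together with Proposition \ref{prop:4.2} twice to pass $\mathbf{Gr}_{\eta_1}\to\mathbf{Gr}_{(\gamma,w)}\to\mathbf{Gr}_{\eta_2}$. The paper simply asserts that such a $\gamma$ exists; your pseudo-Anosov justification works only when phrased as ``for $N$ large, $\phi^N(\delta)$ meets each of the finitely many components of $\alpha_1\cup\alpha_2$,'' because no single simple closed curve can meet every essential simple closed curve, so your claim as literally stated is false.
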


  \begin{proof}
    We can reduce to the case where $w$ is primitive. Indeed, if $w = {\tau}^n$, then $\tau$ also satisfies $\tau = {\tau}^{*}$.

   Let $\Sigma_c$ be the real convex projective surface with the holonomy $\rho$. We denote by $\alpha_1$ and $\alpha_2$ the two multi-geodesics with multiplicities on $\Sigma_c$ corresponding to the grafting annuli of $\mathbf{Gr}_{\eta_1}$ and $\mathbf{Gr}_{\eta_2}$. Consider the simple closed geodesic $\gamma$ on $\Sigma_c$ cutting all components of $\alpha_1$ and all components of $\alpha_2$. By Proposition \ref{prop:4.2} and Lemma \ref{lem:5.2}, there exist a multi-curve $\beta_1$ on $\mathbf{Gr}_{\eta_1}$ and a multi-curve $\beta_2$ on $\mathbf{Gr}_{(\gamma, w)}$ such that $\mathbf{Gr}(\mathbf{Gr}_{\eta_1},(\beta_1)_R) = \mathbf{Gr}_{(\gamma, w)}$ and $\mathbf{Gr}(\mathbf{Gr}_{(\gamma, w)},(\beta_2)_R) = \mathbf{Gr}_{\eta_2}$.
  \end{proof}

 The following proposition is a generalization of the above lemma.
 
  \begin{proposition} \label{prop:5.4}
    Let $w$ be a completely even word. Given two grfating data $\eta_1$ and $\eta_2$ whose weights are all $w$, then $\mathbf{Gr}_{\eta_2}$ can be obtained from $\mathbf{Gr}_{\eta_1}$ by a composition of multi-graftings.
  \end{proposition}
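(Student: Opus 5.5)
We will follow the proof of Lemma \ref{lem:5.3}, using an intermediate structure $\mathbf{Gr}_{(\gamma,w)}$. The new difficulty is that $w \neq w^{*}$ in general, so orientations matter. As in Lemma \ref{lem:5.3}, we first reduce to $w$ primitive: if $w=\tau^n$ then every weight $w$ on a component is realized by $n$ parallel copies weighted $\tau$, and the statement for $\tau$ gives the one for $w$. Note that grafting data with all weights equal to $w$ is the same as an \emph{oriented} multi-curve $\alpha$ with all weights $w$. Reversing a component replaces $w$ by $w^{*}$, so we orient every component so that its weight reads $w$.

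The plan has four steps.

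\textbf{Step 1.} Choose a simple closed geodesic $\gamma$ on $\Sigma_c$ meeting every component of the oriented multi-curves $\alpha_1$ and $\alpha_2$ underlying $\eta_1$ and $\eta_2$.

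\textbf{Step 2.} Show that with suitably chosen orientations we can still realize $(\gamma,w)$ as $\eta_1 \#_R \beta_1$ (or $\#_L$), for some multi-curve $\beta_1$ transverse to $\alpha_1$. The weight-assignment rule in \S4.1 gives each resolved curve the orientation induced from the $\alpha_i$ it meets, with weight $w$. The resolution $\#_R$ (resp.\ $\#_L$) at each crossing is orientation-consistent exactly when the signs of the crossings match the chosen smoothing.

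So the real content is an oriented version of \cite[Lemma 4.1]{CDF:gra}: given oriented multi-curves $\alpha$ and $\gamma$ filling each other as in Lemma \ref{lem:5.2}, find an oriented $\beta$ such that the Luo--Ito oriented resolution of $\alpha\cup\beta$ yields $\gamma$ with the correct orientation. We will construct $\beta$ following the CDF recipe: $\beta$ is built from arcs of $\gamma$ pushed off $\alpha$, together with twists around the components of $\alpha$. We will orient $\beta$ so that every crossing with $\alpha$ is positive, using the freedom to replace arcs by their Dehn-twisted versions. With all crossings positive, $\#_R$ is the oriented smoothing, and the weight products along $\beta$ are powers of $w$. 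Hence $\tau_j = w$, and the resulting components carry weight $w$.

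\textbf{Step 3.} Apply Proposition \ref{prop:4.2} to get $\mathbf{Gr}(\mathbf{Gr}_{\eta_1},(\beta_1)_R)=\mathbf{Gr}_{(\gamma,w)}$, with $\gamma$ carrying some orientation $\gamma^{\epsilon}$.

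\textbf{Step 4.} Repeat Steps 2--3 from $(\gamma^{\epsilon},w)$ to $\eta_2$. If the orientation $\gamma^{\epsilon}$ produced in Step 3 is incompatible with what the second step needs, we insert an intermediate data $(\gamma',w)$ with $\gamma'$ disjoint from or crossing $\gamma$ once. That intermediate step also lets us flip the orientation, via the mapping class group equivariance of Corollary \ref{cor:4.4}, for instance applying a half-twist-like element exchanging orientations. The price is a bounded number of further multi-graftings, which suffices since the statement asks only for some finite composition.

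The main obstacle is Step 2: showing that an oriented smoothing sequence exists, that is, that $\#_R$ and $\#_L$ genuinely reach $\gamma$ with prescribed orientation. If the positive-crossing construction fails because $\alpha$ and $\gamma$ algebraically intersect with mixed signs, the first thing to try is to pass through a curve $\delta$ with $\delta\cdot\alpha_i$ and $\delta\cdot\gamma$ all intersections of one sign. Such a $\delta$ can be obtained by Dehn twisting $\gamma$ many times about the components of $\alpha$. We would then route $\eta_1 \to (\delta,w)\to(\gamma,w)\to\eta_2$, each step being one multi-grafting by Step 2.
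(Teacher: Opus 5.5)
Your proposal has a genuine gap in Step 2, and neither of your proposed repairs closes it.

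\textbf{Why the all-positive mechanism fails.} Suppose $\beta$ meets $\alpha$ only in crossings of one sign. Push the oriented smoothing of $\alpha\cup\beta$ slightly off $\alpha$. Each newly created component alternates between $\alpha$-arcs and $\beta$-arcs. It crosses $\alpha$ once at every $\alpha\to\beta$ switch, always with the same sign. So it has nonzero algebraic intersection with $\alpha$, and in particular it is never separating.

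Dually, suppose $\alpha_1$ has a separating component $\alpha_s$. Then $\hat{i}(\beta,\alpha_s)=0$ for every closed curve $\beta$, so any $\beta$ meeting $\alpha_s$ must cross it with both signs. Hence your mechanism can neither absorb a separating component of $\eta_1$ nor create a separating component of $\eta_2$. The same obstruction appears for two components of $\alpha_1$ whose oriented homology classes cancel.

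\textbf{Why the repairs do not help.} Dehn twists about components of $\alpha$ preserve both $i(\cdot,\alpha_i)$ and $\hat{i}(\cdot,\alpha_i)$. So they cannot change the signs of crossings with $\alpha$, and the fallback curve $\delta$ does not exist in these cases. Corollary \ref{cor:4.4} also gives no ``orientation flip''. It only transports an already constructed chain $\eta\to\eta'$ to a chain $\phi\cdot\eta\to\phi\cdot\eta'$. It does not produce an edge from $(\gamma,w)$ to $(\phi_{*}\gamma,w)$.

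\textbf{The missing idea.} You need a way to handle mixed-sign crossings, where the weight read along $\beta$ contains both $w$ and $w^{*}$. The paper does this by routing through the self-dual weight $ww^{*}$. Since $(ww^{*})^{*}=ww^{*}$, this weight amounts to a parallel pair of weight-$w$ curves with opposite orientations, so orientations stop mattering. The paper's route is:
\begin{enumerate}
\item Graft along every component of $\alpha_1$ so that all weights become $ww^{*}$.
\item Use Lemma \ref{lem:5.3} to move to $(\alpha,ww^{*})$ for a fixed non-separating $\alpha$.
\item From this specific configuration, split off a single oriented weight-$w$ curve by explicit graftings.
\begin{itemize}
\item Non-separating target: pass to $\{(\alpha,w^{*}ww^{*}),(\alpha',w)\}$, graft along $\beta_R$ with $T^{\rho(\beta)}_{ww^{*}}$, then along $\delta_R$.
\item Separating target: use $\{(\alpha,ww^{*}),(\alpha',w)\}$ with $\beta$ crossing the separating curve $\alpha'$ twice, necessarily with opposite signs. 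Exchange $w$ and $w^{*}$ to obtain the other orientation class.
\end{itemize}
\item Only now apply Corollary \ref{cor:4.4}: transport the whole chain starting at $(\alpha,ww^{*})$ by an orientation-preserving diffeomorphism $\phi$ with $\phi_{*}\gamma=\alpha_2$. The new starting point $(\phi_{*}\alpha,ww^{*})$ is again reachable from $\mathbf{Gr}_{\eta_1}$ by Lemma \ref{lem:5.3}.
\end{enumerate}
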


  \begin{proof}
    Let $\sigma_i = \mathbf{Gr}_{\eta_i}$ and $\alpha_i$ be multi-curves of $\eta_i$. If $w = (w')^n$ for some $n >1$ and primitive completely even word $w'$, then we can regard the weight of grafting data $\eta_1$ and $\eta_2$ as $w'$.  Therefore, we may assume $w$ is primitive. Furthermore, we may assume $\alpha_2$ is a simple closed curve since grafting is possible along each component of $\alpha_2$.
    
    By grafting along each component of $\alpha_1$ on $\sigma_1=\sigma^{(0)}_1$, we obtain a projective structure $\sigma^{(1)}_1$ in which all weights take the value $ww^{*}$.  Using Lemma \ref{lem:5.3}, this gives a projective structure $\sigma^{(2)}_1$ corresponding to grafting data $(\alpha,ww^{*})$ for any non-trivial simple closed curve $\alpha$ by a composition of multi-graftings.

    We take $\alpha$ to be the non-separating simple closed curve illustrated in Figure \ref{fig:12}. 
    
 \begin{figure}[h]
   \centering
   \begin{overpic}[width=10cm,clip]{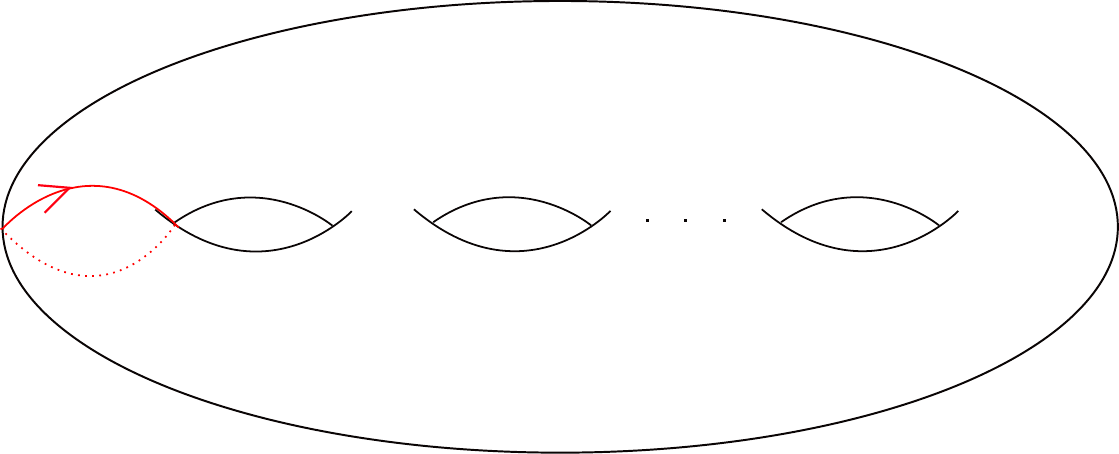}
    \put (20,70) {\color{red}$\alpha$}
   \end{overpic}
   \caption{The simple closed curve $\alpha$}
   \label{fig:12}
 \end{figure}

 We will construct a simple closed curve $\gamma$ from $\sigma^{(2)}_1$, via multi-graftings, such that there is an orientation-preserving diffeomorphism $\phi$ which sends $\gamma$ to $\alpha_2$. By Corollary \ref{cor:4.4}, $\mathbf{Gr}_{\eta_2}$ can be obtained from $\mathbf{Gr}_{(\phi_{*}\alpha, ww^{*})}$ via multi-graftings. This completes the proof.
 
  We divide the argument into two cases: (i) $\alpha_2$ is non-separating, and (ii) $\alpha_2$ is separating.
    
    (i) Suppose that $\alpha_2$ is non-separating. First, we will realize two disjoint simple closed curve $\gamma_1$ and $\gamma_2$ with a weight $w$. Let $\alpha'$ be a non-separating simple closed curve which does not intersect with $\alpha$. Let $\beta$ be the simple closed curve intersecting $\alpha$ and $\alpha'$ exactly once respectively illustrated in Figure \ref{fig:13}.
    
  \begin{figure}[H]
   \centering
   \begin{overpic}[width=10cm,clip]{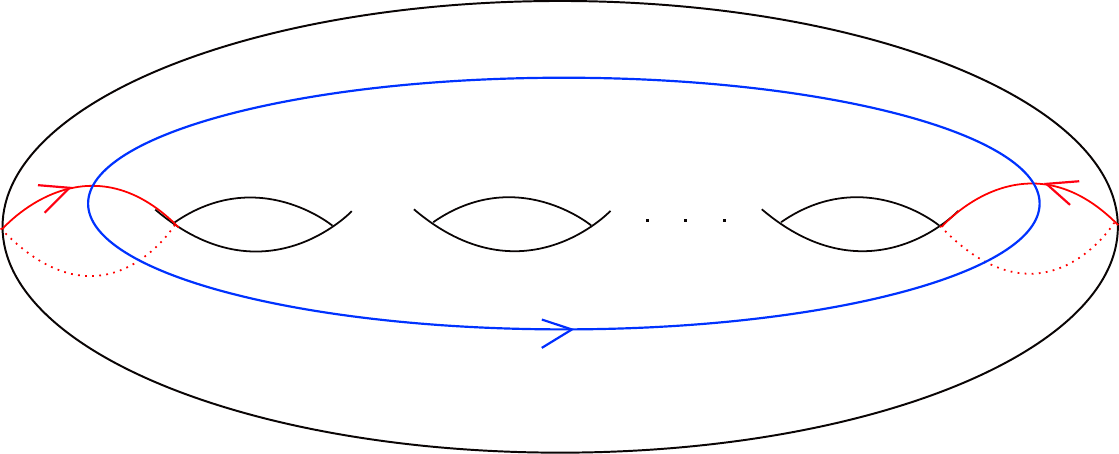}
    \put (10,70) {\color{red}$\alpha$}
    \put (270,55) {\color{red}$\alpha'$}
    \put (150,20) {\color{blue}$\beta$}
   \end{overpic}
   \caption{The simple closed curves $\alpha'$ and $\beta$}
   \label{fig:13}
 \end{figure}

  By grafting along ($\alpha$, $T^{\rho(\alpha)}_{w*}$) and ($\alpha'$, $T^{\rho(\alpha')}_{w}$), we obtain another projective structure $\sigma^{(3)}_1$ corresponding to grafting data $\{(\alpha,w^{*}ww^{*}),(\alpha',w)\}$. We then obtain a projective structure $\sigma^{(4)}_1 = \mathbf{Gr}(\sigma^{(3)}_1,(\beta_R,T^{\rho(\beta)}_{ww^{*}}))$ by grafting along $\beta_R$. By Proposition \ref{prop:4.2}, $\sigma^{(4)}_1$ corresponds to the grafting data $\{(\gamma_1,w),(\gamma_2,w)\}$ where $\gamma_1$ and $\gamma_2$ are illustrated in Figure \ref{fig:14}.
    
 \begin{figure}[h]
   \centering
   \begin{overpic}[width=10cm,clip]{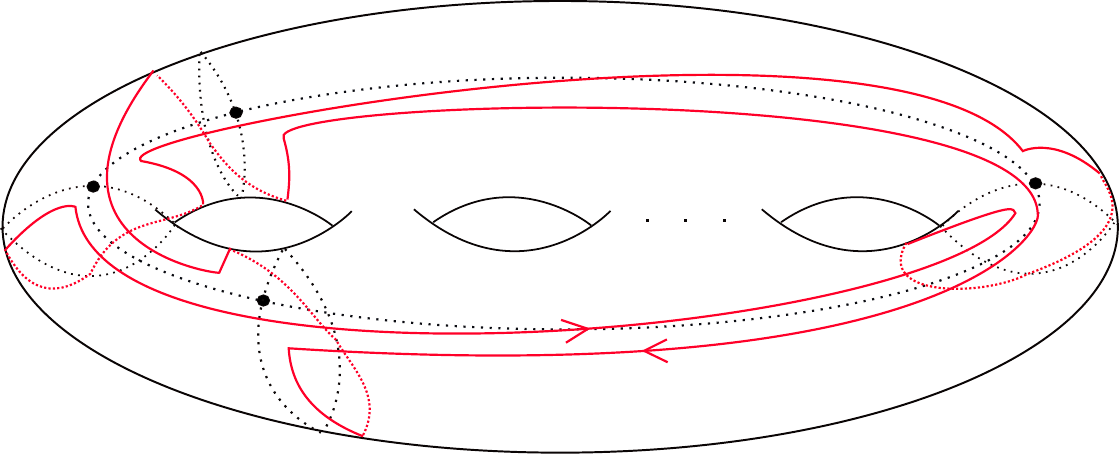}
    \put (130,40) {\color{red}$\gamma_1$}
    \put (190,20) {\color{red}$\gamma_2$}
   \end{overpic}
   \caption{The simple closed curves $\gamma_1$ and $\gamma_2$}
   \label{fig:14}
 \end{figure}

 Next, we will transform $\gamma_1$ and $\gamma_2$ to a simple closed curve $\gamma$.
    
 There is a non-separating simple closed curve $\gamma$ such that $\hat{i}(\gamma_1,\gamma)=i(\gamma_1,\gamma)=\hat{i}(\gamma_2,\gamma)=i(\gamma_2,\gamma)=1$ (Figure \ref{fig:15}).
 
 \begin{figure}[h]
   \centering
   \begin{overpic}[width=10cm,clip]{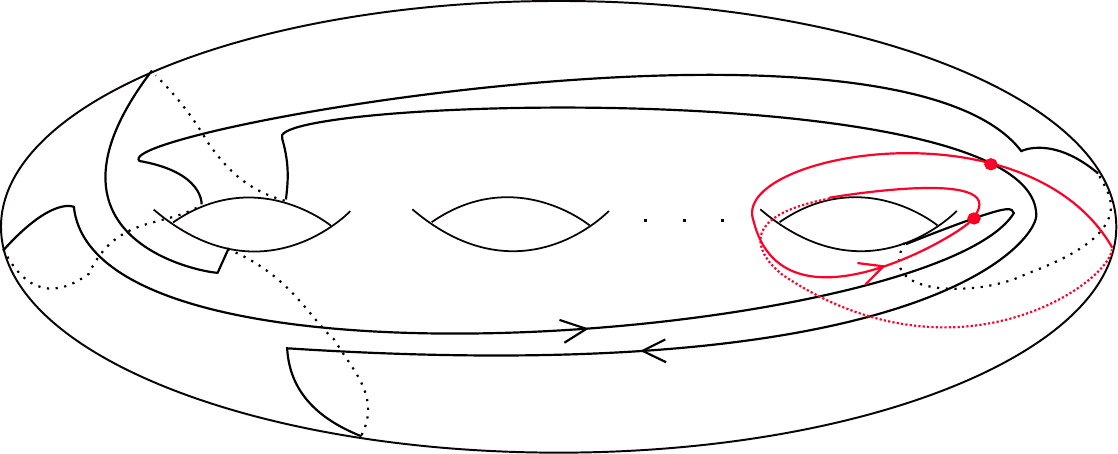}
    \put (185,45) {\color{red}{$\gamma$}}
    \put (130,40) {$\gamma_1$}
    \put (190,20) {$\gamma_2$}
   \end{overpic}
   \caption{The simple closed curve $\gamma$}
   \label{fig:15}
 \end{figure}
 
 Then, we construct a simple closed curve $\delta$ by applying Dehn twists along ${\gamma_1}^{-1}$ and ${\gamma_2}^{-1}$ to $\gamma$ simultaneously. The grafting along $\delta_{R}$ yields $\mathbf{Gr}_{(\gamma,w)}=\mathbf{Gr}(\sigma^{(4)}_1,\delta_R)$. (Compare with \cite[Lemma 4.1]{CDF:gra}.)

    (ii) Next suppose $\alpha_2$ is separating. Let $\alpha'$ be a separating simple closed curve which does not intersect with $\alpha$. Let $\beta$ be the simple closed curve that intersects $\alpha$ once and $\alpha'$ twice respectively illustrated in Figure \ref{fig:16}.
    
  \begin{figure}[h]
   \centering
   \begin{overpic}[width=10cm,clip]{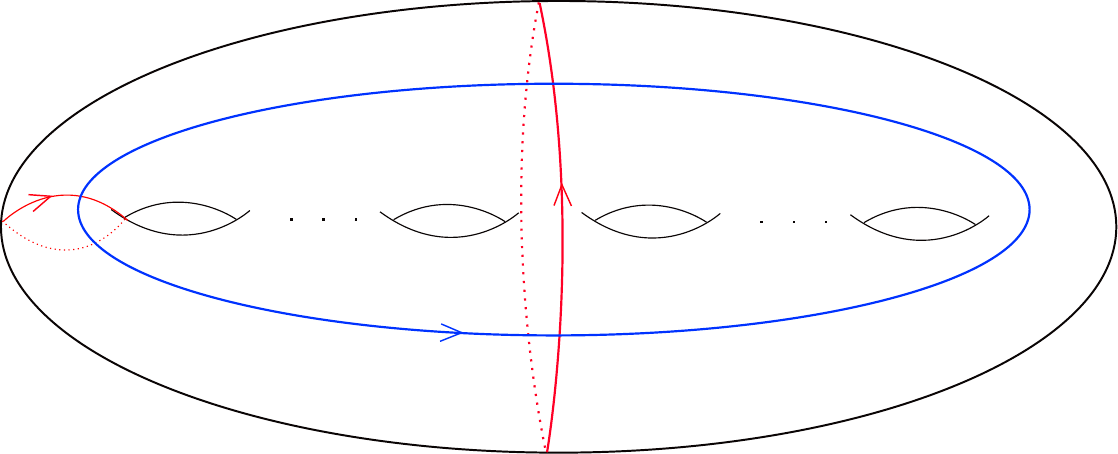}
   \put (10,70) {\color{red}$\alpha$}
   \put (145,70) {\color{red}$\alpha'$}
   \put (110,20) {\color{blue}$\beta$}
   \end{overpic}
   \caption{The simple closed curve $\alpha'$ and $\beta$}
   \label{fig:16}
 \end{figure}
    
    By grafting along $(\alpha', T^{\rho(\alpha')}_{w})$, we obtain another projective structure $\sigma^{3}_1$ corresponding to grafting data $\{(\alpha,ww^{*}),(\alpha',w)\}$. We then obtain a projective structure $\sigma^{(3)}_1 = \mathbf{Gr}(\sigma^{(2)}_1,\beta_R)$. By Proposition \ref{prop:4.2}, $\sigma^{(3)}_{1}$ corresponds to the grafting data $(\gamma,w)$ where $\gamma$ is the separating simple closed curve illustrated in Figure \ref{fig:17}.

 \begin{figure}[h]
   \centering
   \begin{overpic}[width=10cm,clip]{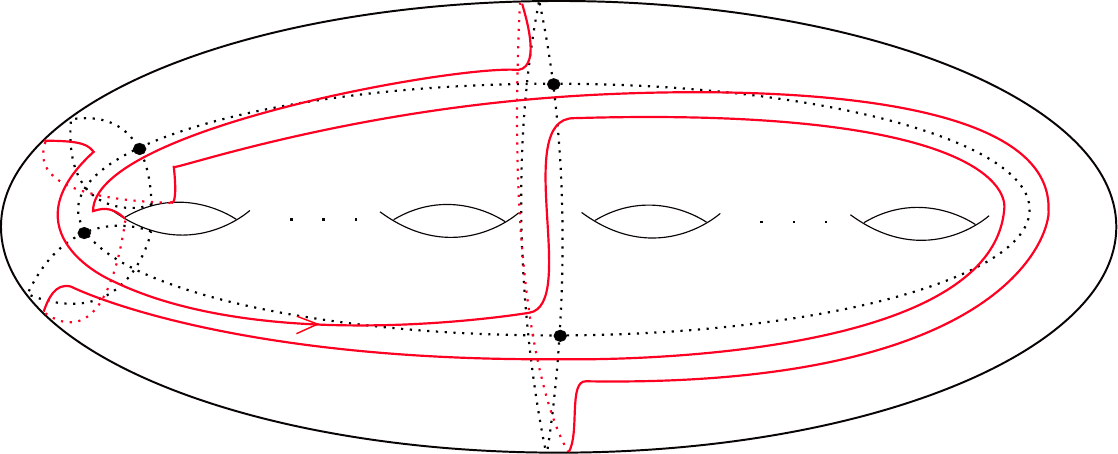}
    \put (80,40) {\color{red}$\gamma$}
   \end{overpic}
   \caption{The simple closed curve $\gamma$}
   \label{fig:17}
 \end{figure}

 We exchange $w$ and $w^{*}$ and carry out the same argument above to construct ${\gamma}^{-1}$. Every simple closed curve may be obtained by mapping $\gamma$ or $\gamma^{-1}$ via an orientation-preserving diffeomorphism.
  \end{proof}

  Now we shall prove the main theorem. Let $\Sigma$ be a closed oriented surface with genus $g$. 

 \begin{theorem} \label{theorem}
   Let $\eta_1$ and $\eta_2$ be grafting data on $\Sigma$ with the same weight types. Let $\mathbf{Gr}_{\eta_1}$ and $\mathbf{Gr}_{\eta_2}$ be projective structures sharing the same Hitchin holonoimy $\rho$. Then $\mathbf{Gr}_{\eta_2}$ can be obtained from $\mathbf{Gr}_{\eta_1}$ by a composition of at most $6g$ multi-graftings.
 \end{theorem}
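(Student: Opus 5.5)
The plan is to pass through an intermediate structure whose grafting data is a single canonical multi-curve, and to reach it from each side by a bounded number of multi-graftings. Write $WT(\eta_1)=WT(\eta_2)=\{w_1,w_1^{*},\dots,w_k,w_k^{*}\}$. Since $\eta_1$ and $\eta_2$ have the same weight type, for each weight class $w_l$ (up to $*$) both data contain at least one component carrying $w_l$ or $w_l^{*}$. Reversing the orientation of a component trades $w$ for $w^{*}$, so we may assume every component of $\eta_1$ and $\eta_2$ is labelled by some $w_l$. The number of disjoint, pairwise non-isotopic simple closed curves on $\Sigma$ is at most $3g-3$, so $k\le 3g-3$. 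The target is $\mathbf{Gr}_{\eta_1}\to\mathbf{Gr}_{\eta_0}$ in at most $3g$ steps and $\mathbf{Gr}_{\eta_0}\to\mathbf{Gr}_{\eta_2}$ in at most $3g$ steps. Here $\eta_0$ is a fixed model: a pants decomposition $\{c_1,\dots,c_k,\dots\}$ in which $c_l$ carries $w_l$.

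For the first half, fix a pants decomposition $P=\{c_1,\dots,c_{3g-3}\}$ of $\Sigma$ together with a dual family. Work one weight class at a time. The components of $\eta_1$ carrying $w_l$ form a sub-datum with all weights $w_l$. By Proposition \ref{prop:5.4}, and more precisely by the moves inside its proof via Proposition \ref{prop:4.2}, Corollary \ref{cor:4.4} and Lemma \ref{lem:5.2}, these can be converted into a single curve $c_l$ with weight $w_l$. The essential point is that these moves must not disturb the components carrying the other weights. To arrange this, choose the curves $\beta$ along which we graft (the $\beta_R$ of Proposition \ref{prop:3.2}) to meet only the $w_l$-components. Then $\eta\#_R\beta$ leaves every other component unchanged.

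Curves from different weight classes can be processed simultaneously when their supports are disjoint, since a multi-grafting is allowed along a multi-curve. I would therefore group the $k$ classes so that each round of multi-grafting handles several classes at once. The aim is a constant number of multi-graftings per pants curve, for example two, plus a bounded overhead, giving at most $3g$ steps in total. The second half, from $\eta_0$ to $\eta_2$, is treated symmetrically: build the path from $\eta_2$'s side and read it backwards by choosing the $L/R$ versions appropriately. Alternatively, note that in Lemma \ref{lem:5.2} both directions $\#_R$ and $\#_L$ exist, so the moves from the model configuration $\eta_0$ to $\eta_2$ can be performed forward in at most $3g$ steps. The total is then at most $6g$.

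The main obstacle is the disjointness and independence claim. The weight-$w_l$ curves of $\eta_1$ may be arbitrarily entangled in the complement of the other-weight curves, yet $\eta\#\beta$ alters every curve that $\beta$ crosses and concatenates their weights. Only curves carrying the same $w$ (with $w=w^{*}$, or suitably oriented) combine harmlessly. My first attempt would be to first use Corollary \ref{cor:4.4} together with the mapping class group action, and to carry out all moves inside the subsurface complementary to the curves of other weights. When that subsurface is too small to contain the auxiliary curves $\alpha,\alpha',\beta$ of Proposition \ref{prop:5.4}, I would instead temporarily merge classes using the $ww^{*}$ trick from that proof. The careful count of steps needed to stay within $6g$ is where I expect the argument to be delicate.
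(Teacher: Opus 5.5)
There is a genuine gap, and most of it lies in the half you treat as routine. The edges of $MG(\rho)$ are oriented. Undoing a grafting means excising an annulus, which is not itself a grafting, so a path from $\mathbf{Gr}_{\eta_2}$ to $\mathbf{Gr}_{\eta_0}$ gives no path in the other direction. Choosing $\#_L$ instead of $\#_R$ does not reverse anything either. In Lemma \ref{lem:5.2}, both $\eta_\alpha\#_R\beta$ and $\eta_\alpha\#_L\beta'$ are forward graftings of $\mathbf{Gr}_{\eta_\alpha}$; they are simply the two ways of resolving the crossings. That lemma also only treats data in which every component carries one fixed word $w=w^*$. So it gives no move from your model $\eta_0$, which has several distinct weights on fixed curves $c_l$, to an arbitrary $\eta_2$, whose components may repeat classes in a completely different arrangement. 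This passage, distributing prescribed weights onto prescribed curves, is the real content of the theorem, and your proposal does not supply it.

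The first half also breaks down, as you suspect. The moves in Proposition \ref{prop:5.4} and Lemma \ref{lem:5.3} use auxiliary curves that must meet every component being moved (e.g.\ the curve $\gamma$ cutting all components). Suppose two $w_l$-curves lie in different complementary regions of the other-weight curves. Then any connected curve meeting both must cross those other-weight curves, and $\#_R$ concatenates weights of different classes. Your fallback of ``temporarily merging classes'' is never turned into an argument, and no step count follows from it.

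The missing idea is to pick the intermediate structure the other way round: concentrate all weights on a single curve rather than keeping classes apart. Splitting a long weight into pieces is a forward move, whereas separating entangled classes is not available to you.

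\begin{enumerate}
\item Reduce to $\eta_2=(\{\gamma_i\},\{w_i\})$ on a pants decomposition and fix $\tau_i\in\{w_i,w_i^*\}$.
\item Since the weight types agree, every weight of $\eta_1$ is some $\tau_j$ after reorienting its component. Grafting along the components of $\eta_1$ therefore makes every weight equal to $a_0=\tau_1\cdots\tau_{3g-3}$. Proposition \ref{prop:5.4} then collapses everything to one curve $\delta_0$ carrying $a_0$, using at most $6$ multi-graftings.
\item Peel off one $\tau_i$ at a time, with $a_{i-1}=\tau_i a_i$. First graft along $\delta_{i-1}$ and an auxiliary curve $\delta_{i-1}'$ to prepare the weights. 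Then graft along $(\beta_{i-1})_R$; Proposition \ref{prop:4.2} yields $(\overline{\gamma}_i,\tau_i)$ and $(\delta_i,a_i)$.
\item The last $3g-3-k$ curves, lying in a genus-one piece, are produced together: one preparatory multi-grafting, then one grafting along $\beta_R$. Steps 3 and 4 cost $2(k+1)$ multi-graftings, so the total is $6+2(k+1)\le 6g$.
\item Proposition \ref{prop:5.4} lets you land on any curve. So start from the image of $\delta_0$ under the diffeomorphism sending $\{\overline{\gamma}_i\}$ to $\{\gamma_i^{\pm1}\}$, and apply Corollary \ref{cor:4.4} to arrive exactly at $\eta_2$.
\end{enumerate}
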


 \begin{proof}
   It suffices to show the following: Let $\gamma = \{\gamma_1,\gamma_2, \cdots ,\gamma_{3g-3}\}$ be a pants-decomposition. Given grafting data $\eta_2 = (\{\gamma_i\}^{3g-3}_{i=1},\{w_i\}^{3g-3}_{i=1})$, there is a grafting data $\eta_1' = (\{\alpha_i\}_{i=1}^{3g-3},\{\tau_i\}_{i=1}^{3g-3})$ such that 

   \begin{itemize}
     \item $\alpha_1,\alpha_2, \cdots \alpha_{3g-3}$ are mutually disjoint and parallel, and sit on $\Sigma$ in the order of the indices,
     \item $\tau_i = w_i$ or $\tau_i = {w_i}^{*}$,
     \item $\mathbf{Gr}_{\eta_2}$ can be obtained from $\mathbf{Gr}_{\eta_1'}$ by a composition of multi-graftings.
   \end{itemize}

   This is because, by grafting with appropriate weights along each component of the multi-curve of $\eta_1$, we can make all the weights equal to $\tau_1 \cdots \tau_{3g-3}$. By Proposition \ref{prop:5.4}, the multi-curve can also be reduced to an arbitrary simple closed curve.

  We can suppose $\gamma_1$ and $\gamma_{3g-3}$ are non-separating simple cosed curves. Furthermore, we may assume that there is a simple closed curve $\gamma$ such that for some $1 \le k < 3g-3$,
  
  $i(\gamma_i,\gamma)=0$ ($i=1, \cdots ,k$)

  $i(\gamma_i,\gamma)=1$ ($i=k+1, \cdots ,3g-3$).

  By reordering the indices, we may further assume that for any $1 \le i < 3g-3$, when the surface $\Sigma$ is cut along $\gamma_1, \cdots ,\gamma_i$, exactly one of the resulting components is not a pair-of-pants. As an example, we illustrate the case of genus $3$ in Figure \ref{fig:pair-of-pants}.

  \begin{figure}[h]
   \centering
   \begin{overpic}[width=10cm,clip]{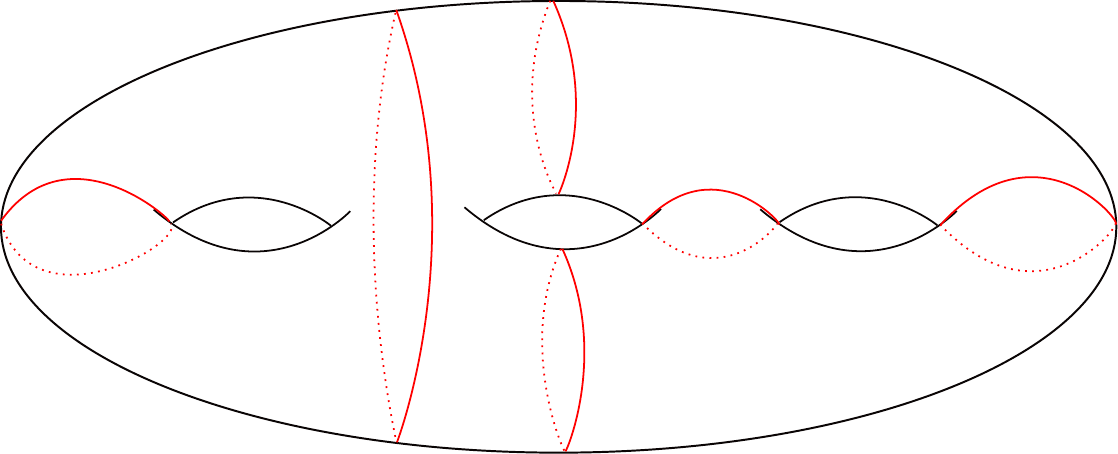}
    \put (30,70) {\color{red} $\gamma_1$}
    \put (98,60) {\color{red} $\gamma_2$}
    \put (150,90) {\color{red} $\gamma_3$}
    \put (150,25) {\color{red} $\gamma_4$}
    \put (180,70) {\color{red} $\gamma_5$}
    \put (260,75) {\color{red} $\gamma_6$}
   \end{overpic}
   \caption{A pair-of-pants $\{\gamma_i\}^{6}_{i=1}$ on the closed surface with genus 3.}
   \label{fig:pair-of-pants}
  \end{figure}
  
  Given grafting data $\eta_1'=(\{\alpha_i\},\{\tau_i\})$ such that $\alpha_i=\delta_0$ for the non-separating simple closed curve $\delta_0$ illustrated in Figure \ref{fig:19}.
  
  \begin{figure}[h]
   \centering
   \begin{overpic}[width=10cm,clip]{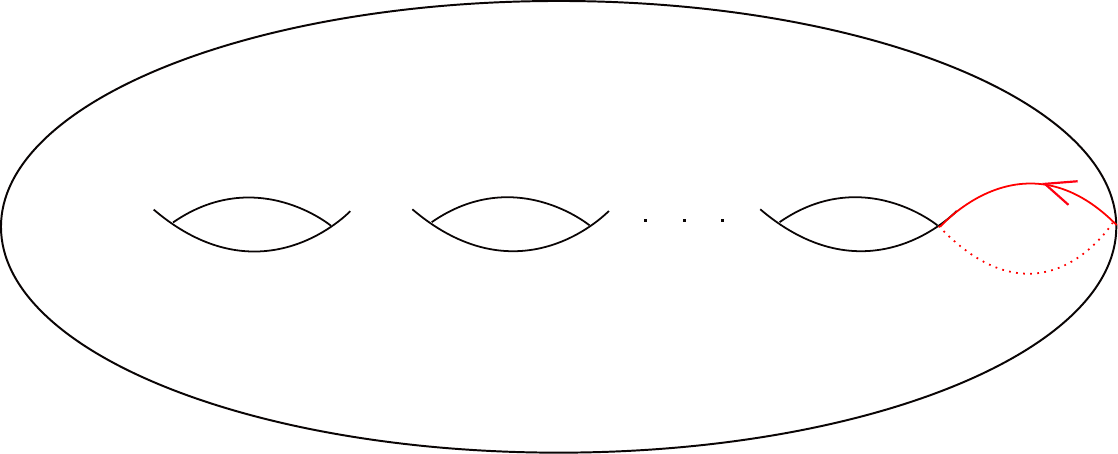}
   \put (260,75) {\color{red} $\delta_0$}
   \end{overpic}
   \caption{The simple closed curve $\delta_0$}
   \label{fig:19}
  \end{figure}
  
  For each $i = 0,1, \cdots$, define a completely even word $a_i$ by $a_i = \tau_{i+1} \cdots \tau_{3g-3}$. We will construct from grafting data $\mathbf{Gr}_{\eta_1'}$, via multi-graftings, a pants-decomposition $\{\overline{\gamma}_1, \cdots ,\overline{\gamma}_{3g-3}\}$ such that there is an orientation-preserving diffeomorphism $\phi$ which sends $\overline{\gamma_i}$ to the corresponding $\gamma_i$. By Corollary \ref{cor:4.4}, $\mathbf{Gr}_{\eta_2}$ can be obtained from $\mathbf{Gr}_{\phi \cdot \eta_1'}$ via multi-graftings. This completes the proof.

 \begin{enumerate}
   
   \item[(i)] First we construct simple closed curves $\overline{\gamma}_1, \cdots ,\overline{\gamma}_k$ inductively:
     \begin{enumerate}
     
       \item[(a)] Construct $\overline{\gamma}_1$ as in the proof of Proposition \ref{prop:5.4}: Let ${\delta_0}'$ be a non-separating simple closed curve which does not intersect with $\delta_0$. Let $\beta_0$ be the simple closed curves intersecting $\delta_0$ and $\delta_0'$ exactly once respectively illustrated in Figure \ref{fig:20}.

  \begin{figure}[h]
   \centering
   \begin{overpic}[width=10cm,clip]{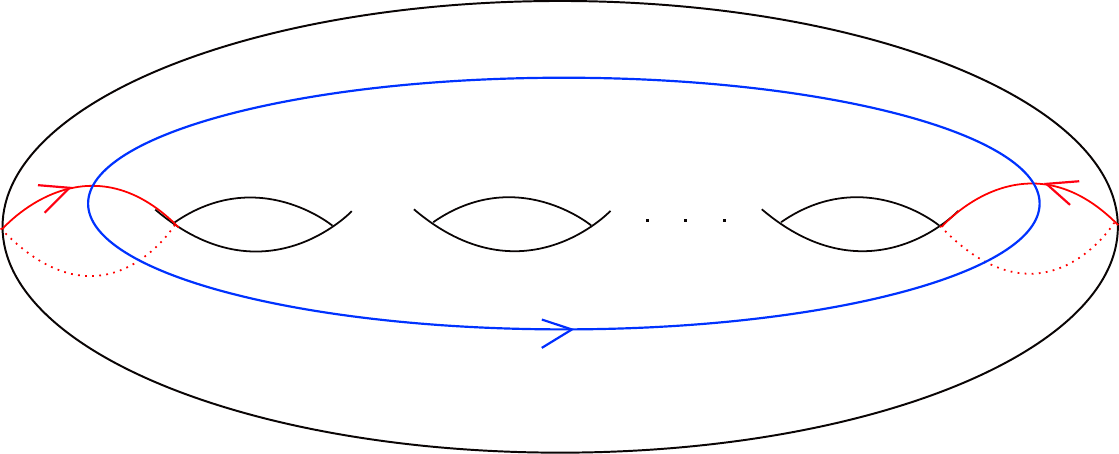}
   \put (260,75) {\color{red} $\delta_0$}
   \put (10,75) {\color{red} $\delta_0'$}
   \put (140,15) {\color{blue} $\beta_0$}
   \end{overpic}
   \caption{The simple closed curves $\delta_0'$ and $\beta_0$}
   \label{fig:20}
  \end{figure}
  
  We obtain a projective structure ${\sigma_0}'$ corresponding to grafting data $\{(\delta_0,a_1a_0),({\delta_0}',\tau_1)\}$ from $\sigma_0 = \mathbf{Gr}_{\eta_1'}$ by grafting along $\delta_0$ and ${\delta_0}'$. Then we may obtain another projective structure $\sigma_1 = \mathbf{Gr}({\sigma_0}',((\beta_0)_R,T^{\rho(\beta)}_{a_0}))$. By Proposition \ref{prop:4.2}, $\sigma_1$ corresponds to the grafting data $\{(\overline{\gamma}_1,\tau_1),(\delta_1,a_1)\}$ where $\overline{\gamma_1}$ and $\delta_1$ are illustrated as in Figure \ref{fig:21}.

  \begin{figure}[h]
   \centering
   \begin{overpic}[width=10cm,clip]{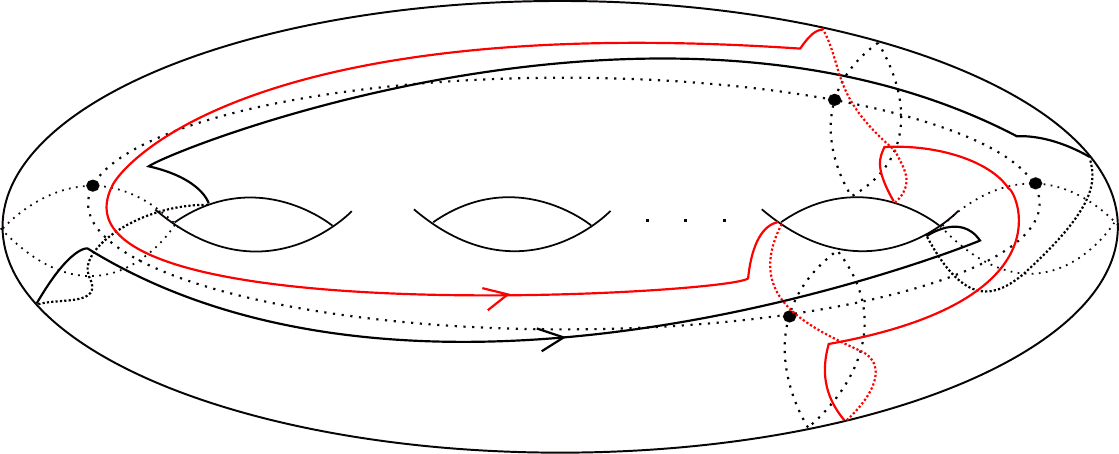}
   \put (140,15) { $\overline{\gamma}_1$}
   \put (95,45) {\color{red} $\delta_1$}
   \end{overpic}
   \caption{The simple closed curves $\overline{\gamma}_1$ and $\delta_1$}
   \label{fig:21}
  \end{figure}
      
      Cutting the surface along the constructed curve $\overline{\gamma}_1$ produces a subsurface $\Sigma_1$.

       \item[(b)] For $i > 1$, construct $\overline{\gamma}_i$ based on the previous constructed projective structure $\sigma_{i-1}$ and subsurfaces $\Sigma_{i-1}$: the projective structure $\sigma_{i-1}$ corresponds to the grafting data $\{(\overline{\gamma}_1,\tau_1), \cdots ,(\overline{\gamma}_{i-1},\tau_{i-1}),(\delta_{i-1},a_{i-1})\}$. $\delta_{i-1}$ is the non-separating simple closed curve on $\Sigma_{i-1}$. We consider two cases.

        \begin{enumerate}
           \item[(b-1)] there exists a pair-of-pants with boundary components $\gamma_{i_1},\gamma_{i_2},\gamma_{i}$ ($i_1,i_2 < i$): We place $\delta_{i-1}$ in the position on $\Sigma_{i-1}$ illustrated as in Figure \ref{fig:22}. Let $\delta_{i-1}'$ be the separate simple closed curve on $\Sigma_{i-1}$ such that one component of $\Sigma_{i-1}|\delta_{i-1}'$ is a pair-of-pants with boundary $\overline{\gamma}_{i_1},\overline{\gamma}_{i_2},\delta_{i-1}'$, see Figure \ref{fig:22}.
             
  \begin{figure}[h]
   \centering
   \begin{overpic}[width=10cm,clip]{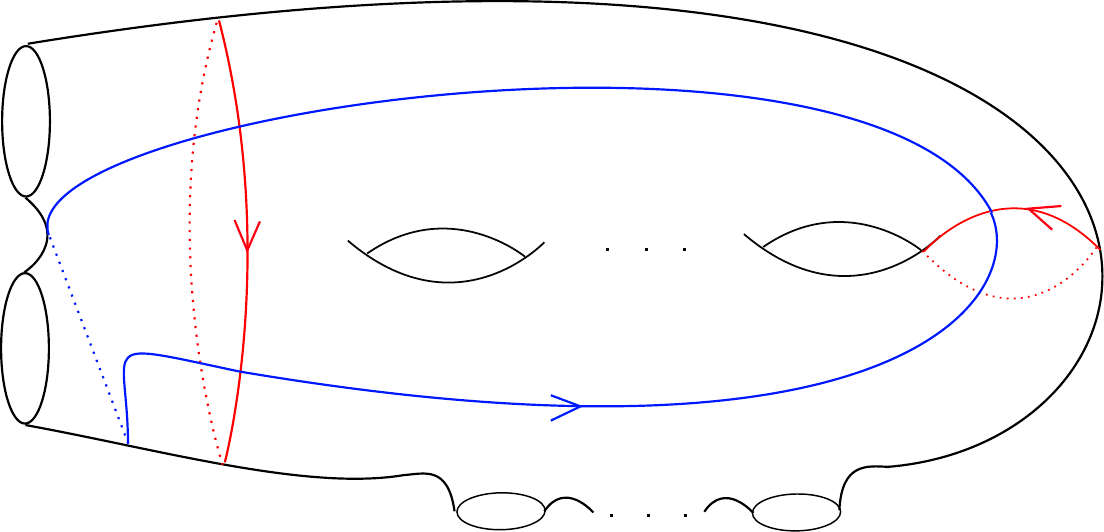}
   \put (-15,105) {$\overline{\gamma}_{i_1}$}
   \put (-15,45) {$\overline{\gamma}_{i_2}$}
   \put (260,90) {\color{red} $\delta_{i-1}$}
   \put (70,80) {\color{red} $\delta_{i-1}'$}
   \put (120,40) {\color{blue} $\beta_{i-1}$}
   \end{overpic}
   \caption{The simple closed curves $\delta_{i-1}$, $\delta_{i-1}'$ and $\beta_{i-1}$ on $\Sigma_{i-1}$}
   \label{fig:22}
  \end{figure}     
             
 We obtain a projective structure $\sigma_{i-1}'$ corresponding to grafting data \[\{(\overline{\gamma_1},\tau_1), \cdots ,(\overline{\gamma}_{i-1},\tau_{i-1}),(\delta_{i-1},a_i \tau_i {\tau_i}^{*} a_i),(\delta_{i-1}',\tau_i)\}\] by grafting along $\delta_{i-1}$ and $\delta_{i-1}'$. Let $\beta_{i-1}$ be the simple closed curve intersecting $\delta_{i-1}$ once and $\delta_{i-1}'$ twice respectively illustrated in Figure \ref{fig:22}. Then, we may obtain another projective structure $\sigma_{i} = \mathbf{Gr}(\sigma_{i-1}',((\beta_{i-1})_R,T^{\rho(\beta_{i-1})}_{\tau_i {\tau_i}^{*} a_i})$. By Proposition \ref{prop:4.2}, $\sigma_i$ corresponds to the grafting data \[\{(\overline{\gamma_1},\tau_1), \cdots ,(\overline{\gamma}_{i-1},\tau_{i-1}),(\overline{\gamma}_i,\tau_i),(\delta_i,a_i)\}\] where $\overline{\gamma}_i$ and $\delta_i$ are illustrated as in Figure \ref{fig:23}.
             
  \begin{figure}[H]
   \centering
   \begin{overpic}[width=10cm,clip]{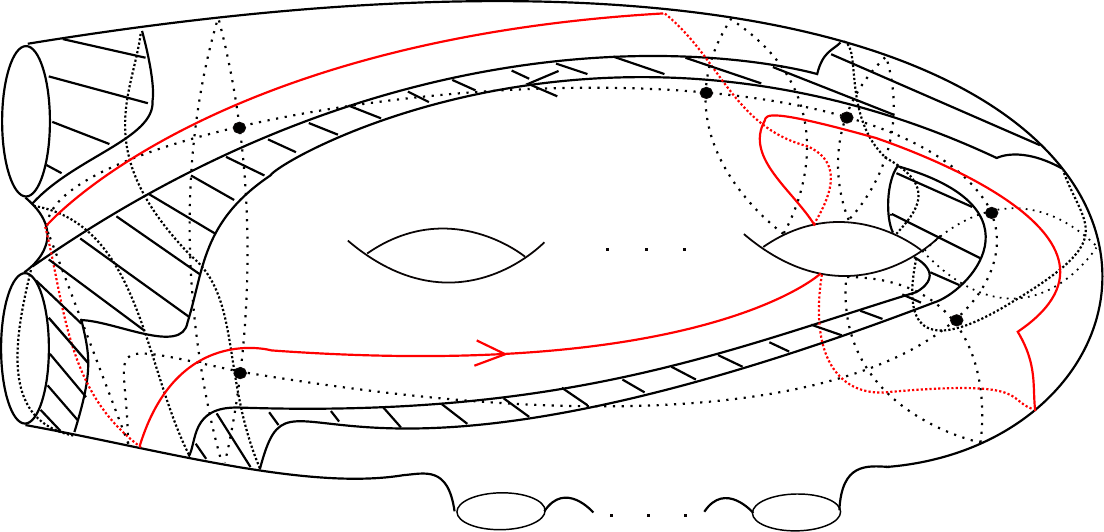}
   \put (140,100) {$\overline{\gamma}_i$}
   \put (130,50) {\color{red} $\delta_i$}
   \put (-15,105) {$\overline{\gamma}_{i_1}$}
   \put (-15,45) {$\overline{\gamma}_{i_2}$}
   \end{overpic}
   \caption{The simple closed curves $\overline{\gamma}_i$ and $\delta_i$ on $\Sigma_{i-1}$. The shaded region is the pair-of-pants with boundary components $\overline{\gamma}_{i_1}$, $\overline{\gamma}_{i_2}$ and $\overline{\gamma}_i$.}
   \label{fig:23}
  \end{figure}
  
  When we cut $\Sigma_{i-1}$ along $\overline{\gamma}_i$, it decomposes into a pair-of-pants and a subsurface $\Sigma_i$. $\delta_i$ is the non-separate simple closed curve on $\Sigma_i$.
 
           \item[(b-2)] Otherwise, i.e., no such pair-of-pants exists: $\Sigma_{i-1}$ is a compact subsurface of $\Sigma$ with genus at least 2. We place $\delta_{i-1}$ in the position on $\Sigma_{i-1}$ illustrated as in Figure \ref{fig:24}.
             
  \begin{figure}[h]
   \centering
   \begin{overpic}[width=10cm,clip]{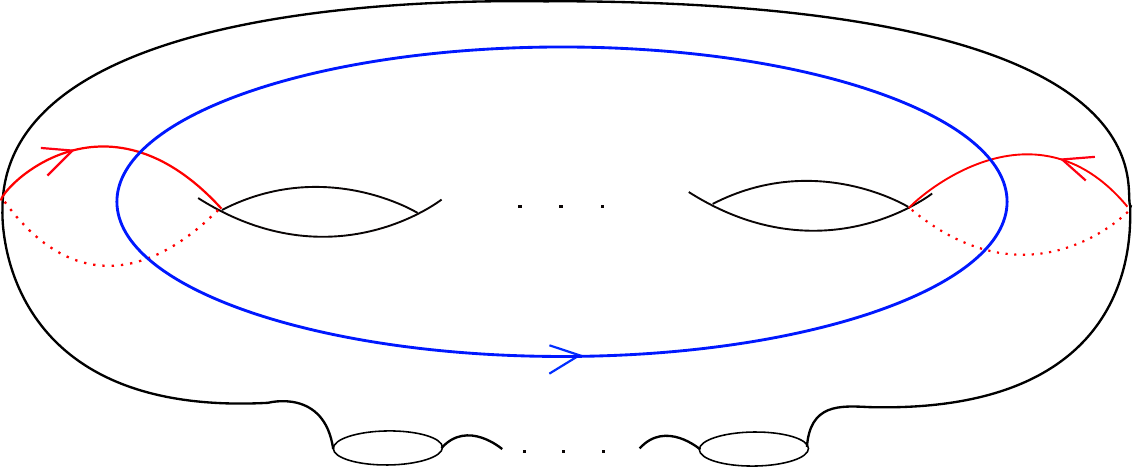}
   \put (250,85) {\color{red} $\delta_{i-1}$}
   \put (20,85) {\color{red} $\delta_{i-1}'$}
   \put (140,35) {\color{blue} $\beta_{i-1}$}
   \end{overpic}
   \caption{The simple closed curves $\delta_{i-1}$,$\delta_{i-1}'$ and $\beta_{i-1}$ on $\Sigma_{i-1}$}
   \label{fig:24}
  \end{figure}            
             
  Let $\delta_{i-1}'$ be a non-separating simple closed curve on $\Sigma_{i-1}$ which does not intersect $\delta_{i-1}$. Let $\beta_{i-1}$ be the non-separating simple closed curves intersecting $\delta_{i-1}$ and $\delta_{i-1}'$ exactly once respectively illustrated in Figure \ref{fig:24}. We obtain a projective structure $\sigma_{i-1}'$ corresponding to grafting data \[\{(\overline{\gamma}_1,\tau_1), \cdots ,(\overline{\gamma}_{i-1},\tau_{i-1}), (\delta_{i-1},a_i a_{i-1}),(\delta_{i-1}',\tau_i)\}\] by grafting along $\delta_{i-1}$ and $\delta_{i-1}'$. Then, we may obtain another projective structure $\sigma_i = \mathbf{Gr}(\sigma_{i-1}',((\beta_{i-1})_R,T^{\rho(\beta_{i-1})}_{a_{i-1}}))$. By Proposition \ref{prop:4.2}, $\sigma_i$ corresponds to the grafting data \[\{(\overline{\gamma}_1,\tau_1), \cdots ,(\overline{\gamma}_{i-1},\tau_{i-1}),(\overline{\gamma}_{i},\tau_i),(\delta_i,a_i)\}\] where $\overline{\gamma}_i$ and $\delta_i$ are illustrated as in Figure \ref{fig:25}.
  
  \begin{figure}[h]
   \centering
   \begin{overpic}[width=10cm,clip]{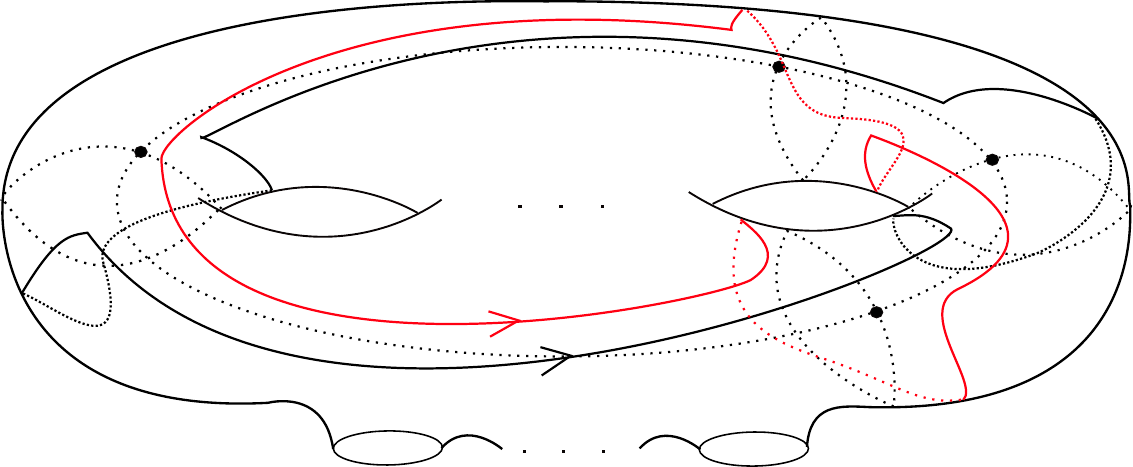}
   \put (160,20) {$\overline{\gamma}_i$}
   \put (110,40) {\color{red} $\delta_i$}
   \end{overpic}
   \caption{The simple closed curves $\overline{\gamma}_i$ and $\delta_i$ on $\Sigma_{i-1}$}
   \label{fig:25}
  \end{figure}      
  
  Cutting the surface $\Sigma_{i-1}$ along $\overline{\gamma_i}$ produces a subsurface $\Sigma_i$. $\delta_i$ is the non-separate simple cosed curve on $\Sigma_i$.
        \end{enumerate}
    \end{enumerate}
  \item[(ii)] Finally, we construct simple closed curves $\overline{\gamma}_{k+1}, \cdots ,\overline{\gamma}_{3g-3}$: $\Sigma_k$ is a compact subsurface of $\Sigma$ with genus 1. $\Sigma_k$ has $3g-3-k$ boundary components. We place $\delta_k$ in the position on $\Sigma_k$ illustrated as in Figure \ref{fig:26}.
  
  \begin{figure}[h]
   \centering
   \begin{overpic}[width=10cm,clip]{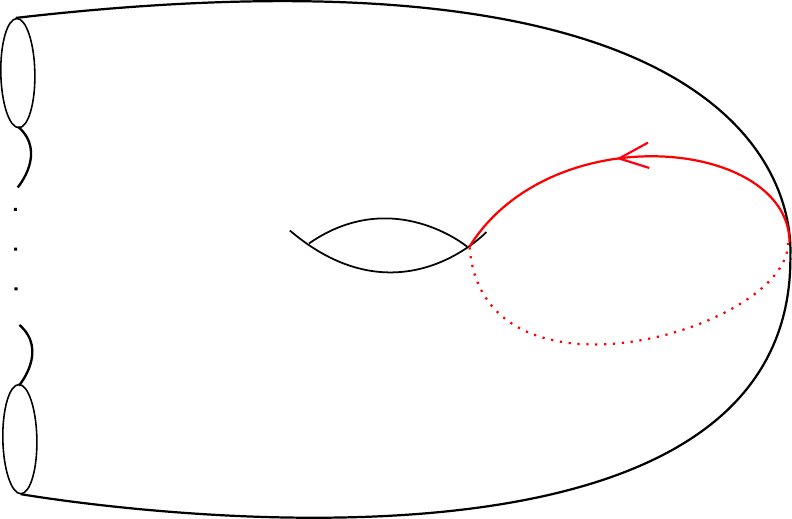}
   \put (200,135) {\color{red} $\delta_k$}
   \end{overpic}
   \caption{The simple closed curves $\delta_k$ on $\Sigma_k$}
   \label{fig:26}
  \end{figure}      
  
  Let $\delta_{k+1}, \cdots \delta_{3g-3}$ and $\beta$ be the simple closed curves on $\Sigma_k$ such that $\delta_{3g-3}$ istopic to $\delta_k$, and $\beta$ intersects each simple close curve $\delta_i$ ($i=k, \cdots,3g-3$) at one point. In Figure \ref{fig:27}, we illustrated the case $\Sigma_k$ has three boundary components.
  
  \begin{figure}[h]
   \centering
   \begin{overpic}[width=10cm,clip]{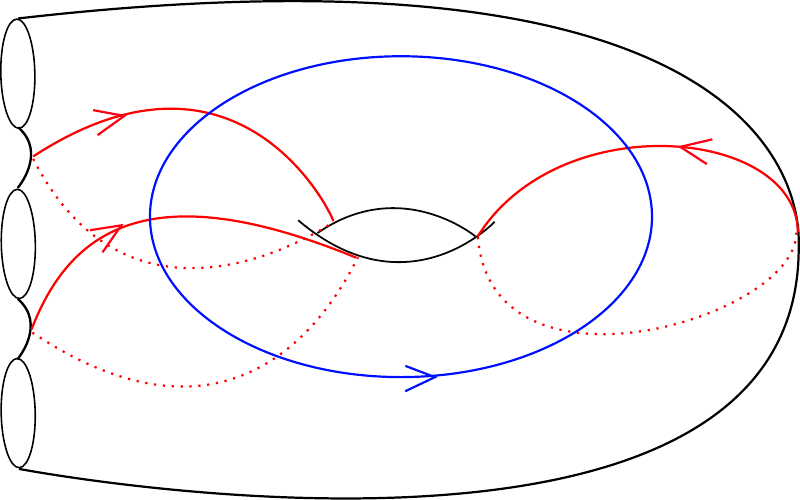}
   \put (230,132) {\color{red} $\delta_{k+3}=\delta_{3g-3}$}
   \put (50,145) {\color{red} $\delta_{k+1}$}
   \put (30,70) {\color{red} $\delta_{k+2}$}
   \put (130,50) {\color{blue} $\beta$}
   \end{overpic}
   \caption{The simple closed curves $\delta_{k+1}$, $\delta_{k+2}$, $\delta_{k+3}$ and $\beta$ on $\Sigma_k$, which has three boundary components}
   \label{fig:27}
  \end{figure}     
  
  We obtain a projective structure $\sigma_k'$ corresponding to grafting data \[\{(\overline{\gamma}_1,\tau_1), \cdots ,(\overline{\gamma}_{k},\tau_k), (\delta_{k+1},\tau_{k+1}), \cdots ,(\delta_{3g-4},\tau_{3g-4}),(\delta_{3g-3},\tau_{3g-3}a_k)\}\] by grafting along $\delta_{k+1}, \cdots , \delta_{3g-3}$. 
Then we may another projective structure $\sigma_{k+1} = \mathbf{Gr}(\sigma_k',(\beta_R,T^{\rho(\beta)}_{a_k}))$. By Proposition \ref{prop:4.2}, the projective structure $\sigma_{k+1}$ corresponds to the grafting data \[\{(\overline{\gamma}_1,\tau_1), \cdots ,(\overline{\gamma}_{k},\tau_k), (\overline{\gamma}_{k+1},\tau_{k+1}), \cdots ,(\overline{\gamma}_{3g-3},\tau_{3g-3})\}.\] $\overline{\gamma}_{k+1}, \cdots ,\overline{\gamma}_{3g-3}$ are simple closed curves on $\Sigma_k$ such that cutting $\Sigma_k$ along $\overline{\gamma}_{k+1}, \cdots ,\overline{\gamma}_{3g-3}$ decomposes into $3g-3-k$ pair-of-pants. Figure \ref{fig:28} illustrates the case $\Sigma_k$ has three boundary components.

 \begin{figure}[h]
   \centering
   \begin{overpic}[width=10cm,clip]{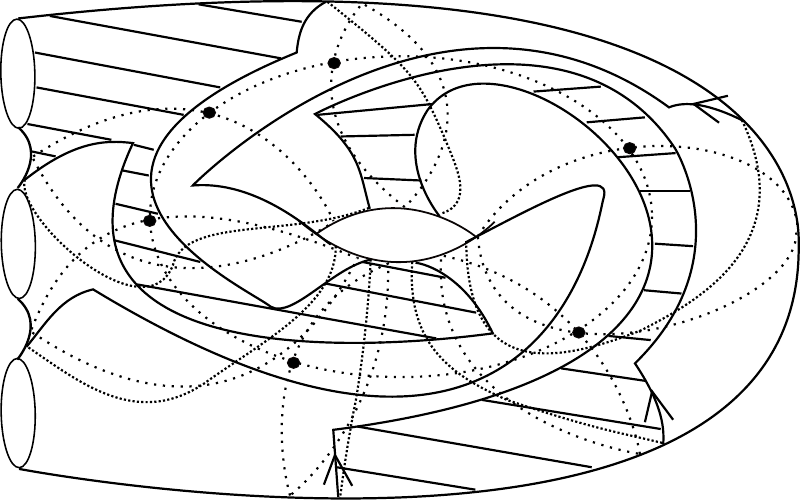}
   \put (240,10) {$\overline{\gamma}_{k+1}$}
   \put (265,140) {$\overline{\gamma}_{k+2}$}
   \put (90,15) {$\overline{\gamma}_{k+3}$}
   \end{overpic}
   \caption{The simple closed curves $\overline{\gamma}_{k+1}$, $\overline{\gamma}_{k+2}$, $\overline{\gamma}_{k+3}=\overline{\gamma}_{3g-3}$ on $\Sigma_k$, which has three boundary components. The shaded region is the pair-of-pants which has boundaries $\overline{\gamma}_{k+3}$ and $\overline{\gamma}_{k+1}$.}
   \label{fig:28}
  \end{figure}
  
 \end{enumerate}
 
 We have constructed a projective structure corresponding to grafting data $(\{\overline{\gamma}_i\},\{\tau_i\})$ such that there is an orientation-preserving diffeomorphism $\phi$ which sends each $\overline{\gamma}_i$ to $\gamma_i$ or $\gamma^{-1}_i$.

 Finally, we count the number of multi-graftings carried out passing from $\mathbf{Gr}_{\eta_1}$ to $\mathbf{Gr}_{\eta_2}$. At most $6$ multi-graftings are required to pass from $\mathbf{Gr}_{\eta_1}$ to $\mathbf{Gr}_{\eta_1'}$. In the latter step, $2(k+1)$ multi-graftings are performed. Therefore, we can pass from $\mathbf{Gr}_{\eta_1}$ to $\mathbf{Gr}_{\eta_2}$ via at most $2(k+1)+6 \le 6g$ multi-graftings.
\end{proof}

\section{The oriented graph $MG(\rho)$}

 Recall that $MG(\rho)$ is the oriented graph such that the vertices are the real proejctive structures with Hitchin holonomy $\rho$, and two vertices $\sigma_1$, $\sigma_2$ are joined by an oriented edge from $\sigma_1$ to $\sigma_2$ if one can obtain $\sigma_2$ form $\sigma_1$ by a multi-grafting.

 We define a (partial) order $\le$ on the set of completely even words as follow.
Given two completely even words $w$ and $w'$, $w \le w'$ if there are a decomposition of $w$ into some positive words $w = w_1 w_2 \cdots w_n$ and completely even words $\tau_1, \cdots, \tau_{n-1}$ such that $w' = w_1 \tau_1 \cdots w_{n-1} \tau_{n-1} w_n$. This order induces a preorder (also denoted by $\le$) on grafting data $\mathscr{GD}$: for $\eta, \eta' \in \mathscr{GD}$, $\eta \le \eta'$ if there is a map $\pi \colon WT(\eta) \to WT(\eta')$ such that $w \le \pi(w)$ for any $w \in WT(\eta)$.

 Recall that we denote by $T^{A}_{w}$ the special $\mathbb{RP}^2$-torus with a hyperbolic holonomy $A$ corresponding to a completely even word $w$. If two completely even words $w$ and $w'$ satisfying $w \le w'$, $T^{A}_{w'}$ is obtained from $T^{A}_{w}$ by grafting along principal simple closed geodesics. This implies that, given two grafting data $\eta_1$ and $\eta_2$ satisfying $\eta_1 \le \eta_2$, we may construct grafting data $\eta_1'$ from $\eta_1$ by grafting such that $WT(\eta_1') = WT(\eta_2)$. As an immediate consequence of the main theorem, we obtain the following generalization.

 \begin{corollary} \label{cor:6.1}
   Let $\eta_1$ and $\eta_2$ be grafting data satisfying $\eta_1 \le \eta_2$. Then we may obtain $\mathbf{Gr}_{\eta_2}$ from $\mathbf{Gr}_{\eta_1}$ by a composition of multi-graftings.
 \end{corollary}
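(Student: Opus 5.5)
The argument runs in two stages. First, a multi-grafting step turns $\eta_1$ into grafting data $\eta_1'$ with $WT(\eta_1') = WT(\eta_2)$. Then the main theorem applies to the pair $(\eta_1', \eta_2)$. Since a composition of compositions of multi-graftings is again such a composition, this gives the corollary.

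\textbf{Step 1: word-level grafting on special tori.} Fix the map $\pi \colon WT(\eta_1) \to WT(\eta_2)$ with $w \le \pi(w)$. For $w = w_1 w_2 \cdots w_n$ and $\pi(w) = w_1 \tau_1 \cdots w_{n-1}\tau_{n-1} w_n$, I use the description of $T^{A}_{w}$ from \S 2.1. There, $T^{A}_w | C$ is a chain of elementary annuli, and the sequence of principal geodesics (developing to $\overline{p_+p_-}$) is read off as the word in $x,y$. Each cut point between $w_k$ and $w_{k+1}$ sits at an occurrence of a $V_2$-geodesic, which is a principal simple closed geodesic with holonomy $A$. Grafting $T^A_{\tau_k}$ along that geodesic inserts the letters of $\tau_k$ at that position. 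Doing this at all $n-1$ positions simultaneously turns $T^A_w$ into $T^A_{\pi(w)}$.

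\textbf{Step 2: lifting to the surface.} Transport Step 1 to $\mathbf{Gr}_{\eta_1}$. Each grafting annulus of $\mathbf{Gr}_{\eta_1}$ along $\alpha_i$ with weight $w_i$ is $T^{\rho(\alpha_i)}_{w_i}$ cut along its boundary geodesic. The interior principal geodesics of this annulus are simple closed geodesics in $\mathbf{Gr}_{\eta_1}$ isotopic to $\alpha_i$. They develop into straight segments inside the grafting annulus, so they are graftable in the sense of Definition \ref{def:2.4}, with lift into $\widehat{T^{\rho(\alpha_i)}_{\tau_k}}$. I graft along all of them at once, over all $i$, which is a single multi-grafting. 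The weight of $\alpha_i$ becomes $\pi(w_i)$ or $\pi(w_i^*)^*$, depending on orientation.

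One check is needed: $w \le w'$ implies $w^* \le w'^*$. Reversing and swapping letters turns the decomposition $w = w_1\cdots w_n$ into $w^* = w_n^* \cdots w_1^*$ and reverses the insertion pattern accordingly. This lets me choose the grafting consistently on both $w_i$ and $w_i^*$.

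\textbf{Step 3: matching weight types and the obstacle.} The resulting $\eta_1'$ has $WT(\eta_1') = \pi(WT(\eta_1))$. This is where I expect the main difficulty. The statement only assumes a map $\pi$, not a surjection, so $\pi(WT(\eta_1))$ may be a proper subset of $WT(\eta_2)$. I would fix this by first grafting, along disjoint non-separating curves (or parallel copies), extra weights realizing the missing elements of $WT(\eta_2)$. On the convex part, every curve is graftable into any $T^A_w$, so I can graft in annuli with weights from $WT(\eta_2)$ directly. Alternatively, the proof of the main theorem already uses the preliminary move of making all weights equal to one product $\tau_1\cdots\tau_{3g-3}$, which absorbs such discrepancies.

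Once the weight types agree, the main theorem yields $\mathbf{Gr}_{\eta_2}$ from $\mathbf{Gr}_{\eta_1'}$. The remaining care point is confirming that the principal geodesics used in Step 2 really are graftable with the required torus, and that simultaneous insertion at several positions within one annulus is legitimate. The latter holds because these geodesics are disjoint and parallel.
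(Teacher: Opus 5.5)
Your argument follows the paper's route: raise each weight $w$ to $\pi(w)$ by grafting along the interior principal geodesics of the grafting annuli, then apply the main theorem. Your Step 3 rightly flags what the paper's one-line justification skips, namely that $\pi$ need not be surjective. Nor need $\pi$ be $*$-equivariant, so what you actually get is $WT(\eta_1')\subseteq WT(\eta_2)$, not $WT(\eta_1')=\pi(WT(\eta_1))$.

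Of your two remedies, rely on the second, because the proof of the main theorem only uses this inclusion: it immediately grafts every weight up to $\tau_1\cdots\tau_{3g-3}$. The first remedy, adding new disjoint curves, can fail. When the multicurve of $\eta_1$ is already a pants decomposition there is no room for a new curve, and grafting parallel copies only merges weights into products under the convention of \S 5.1 rather than adding new elements to the weight type.
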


 The preorder $\le$ induces an equivalence relation $\sim$ on $\mathscr{GD}$, defined by $\eta \sim \eta'$ if and only if $\eta \le \eta'$ and $\eta' \le \eta$. The set of equivalence classes $\mathscr{GD} / {\sim}$ then inherits an order (also denoted by $\le$), where $[\eta] \le [\eta']$ if $\eta \le \eta'$.

 Corollary \ref{cor:6.1} allows us to construct a model of $MG(\rho)$ associated with weight types, namely the Hasse diagram associated with the ordered set ($\mathscr{GD}/{\sim}$, $\le$).

 \begin{figure}[h]
   \centering
   \begin{overpic}[width=10cm,clip]{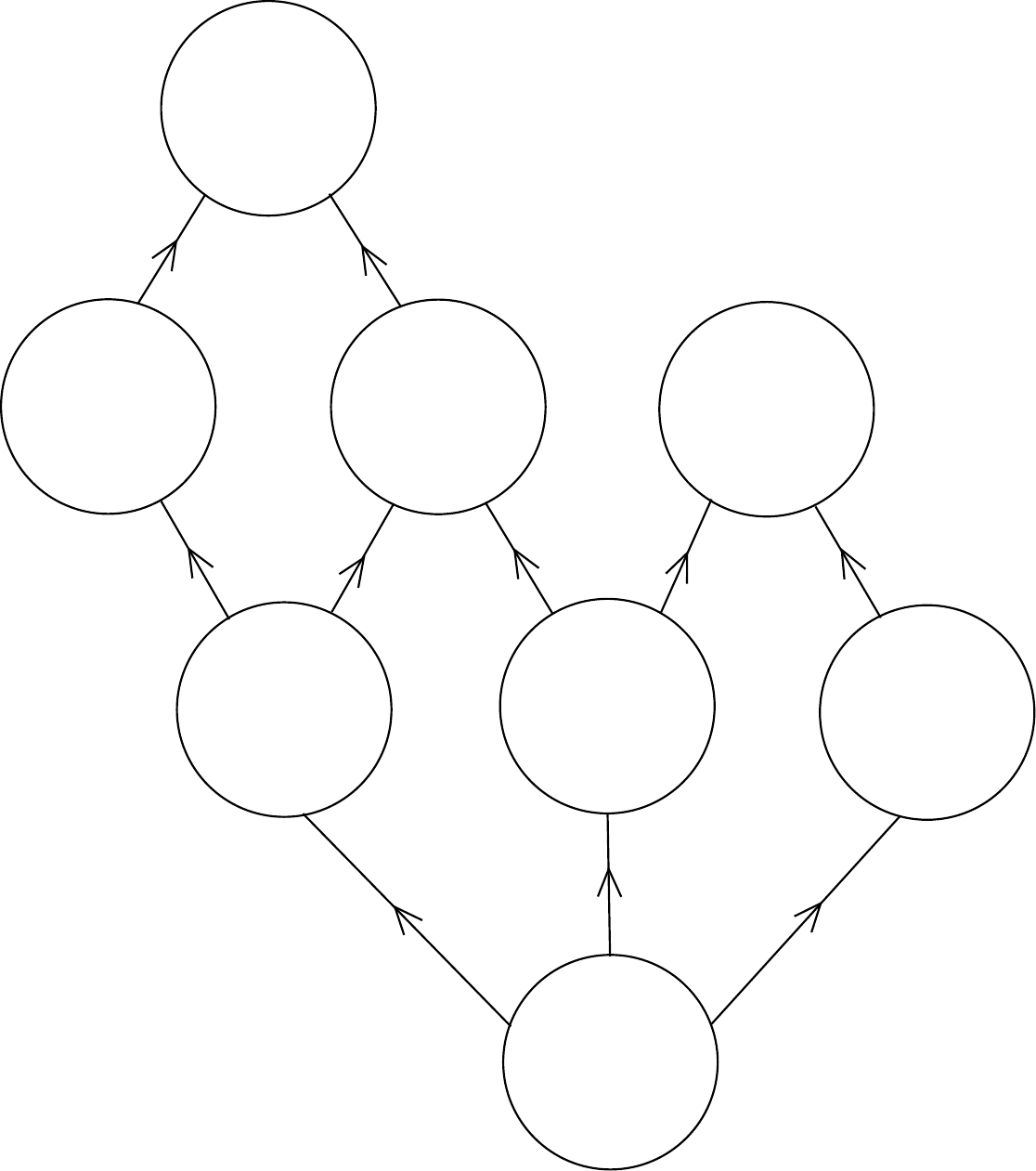}
   \put (165,25) {$1$}

   \put (155,125) {$xyxy$}

   \put (243,123) {$yxyx$}

   \put (71,135) {$xx$}
   \put (71,115) {$yy$}

   \put (199,217) {$xyxy$}
   \put (199,197) {$yxyx$}

   \put (115,225) {$xx$}
   \put (115,210) {$yy$}
   \put (110,195) {$xyxy$}

   \put (17,217) {$xyyx$}
   \put (17,197) {$yxxy$}

   \put (62,305) {$xyyx$}
   \put (62,290) {$yxxy$}
   \put (62,275) {$xyxy$}
   
   \end{overpic}
   \caption{A weight-type model of $MG(\rho)$. Each circle represents the equivalence class of grafting data with the displayed weight type.}
   \label{fig:29}
  \end{figure}


\begin{thebibliography}{Gol1}

  \bibitem[CDF]{CDF:gra}
  G. Calsamiglia-Mendlewicz, B. Deroin and S. Francaviglia, The oriented graph of multi-graftings in the Fuchsian case, Publ. Mat. {\bf 58} (2014), no.~1, 31--46; MR3161507
  
  \bibitem[Cho1]{Choi:admI}
  S. Choi, Convex decompositions of real projective surfaces. I. $\pi$-annuli and convexity, J. Differential Geom. {\bf 40} (1994), no.~1, 165--208; MR1285533
  
  \bibitem[Cho2]{Choi:admII}
  S. Choi, Convex decompositions of real projective surfaces. II. Admissible decompositions, J. Differential Geom. {\bf 40} (1994), no.~2, 239--283; MR1293655

  \bibitem[CG]{CG:hit}
  S. Choi and W.~M. Goldman, Convex real projective structures on closed surfaces are closed, Proc. Amer. Math. Soc. {\bf 118} (1993), no.~2, 657--661; MR1145415

  \bibitem[Gol1]{Goldman:fuc}
  W.~M. Goldman, Projective structures with Fuchsian holonomy, J. Differential Geom. {\bf 25} (1987), no.~3, 297--326; MR0882826

  \bibitem[Gol2]{Goldman:con}
  W.~M. Goldman, Convex real projective structures on compact surfaces, J. Differential Geom. {\bf 31} (1990), no.~3, 791--845; MR1053346

  \bibitem[Gol3]{Goldman:geo}
  W.~M. Goldman, {\it Geometric structures on manifolds}, Graduate Studies in Mathematics, 227, Amer. Math. Soc., Providence, RI, [2022] \copyright 2022; MR4500072

  \bibitem[Ito]{Ito:exo}
  K. Ito, Exotic projective structures and quasi-Fuchsian space. II, Duke Math. J. {\bf 140} (2007), no.~1, 85--109; MR2355068

  \bibitem[Luo]{Luo:multi}
  F. Luo, Some applications of a multiplicative structure on simple loops in surfaces, in {\it Knots, braids, and mapping class groups---papers dedicated to Joan S. Birman (New York, 1998)}, 123--129, AMS/IP Stud. Adv. Math., 24, Amer. Math. Soc., Providence, RI, ; MR1873113

  \bibitem[ST]{ST:exa}
  D.~P. Sullivan and W.~P. Thurston, Manifolds with canonical coordinate charts: some examples, Enseign. Math. (2) {\bf 29} (1983), no.~1-2, 15--25; MR0702731

\end{thebibliography}
\end{document}